\documentclass[times,sort]{elsarticle}

\journal{Journal of Computational Physics}

\usepackage{yfonts}

\usepackage[margin=2.0cm]{geometry}
\usepackage{graphicx}
\usepackage{amssymb}
\usepackage{amsmath}
\usepackage[ruled,vlined]{algorithm2e}
\DontPrintSemicolon
\SetAlFnt{\small}

\usepackage{mathrsfs}
\usepackage{epstopdf}
\usepackage{stmaryrd}
\usepackage{subfig}
\usepackage{amsthm}
\usepackage{lmodern}
\usepackage{enumitem}
\usepackage{amsfonts}
\usepackage{mathtools}
\usepackage{booktabs}
\usepackage{xcolor}
\usepackage{stackrel}
\DeclareGraphicsRule{.tif}{png}{.png}{`convert #1 `dirname #1`/`basename #1 .tif`.png}
\theoremstyle{plain}
\newtheorem{thm}{Theorem}
\newtheoremstyle{proofpartstyle}
  {1pc} 
  {1pc} 
  {} 
  {2.5mm} 
  {\itshape} 
  {} 
  {-0.1em} 
  {} 
\theoremstyle{proofpartstyle} \newtheorem*{proofpart}{Proof of Part}
\makeatletter
\@addtoreset{proofpart}{theorem}
\makeatother
\theoremstyle{plain}
\newtheorem{Lem}{Lemma}
\newtheorem{Prop}{Proposition}
\theoremstyle{remark}
\newtheorem{rem}{Remark}

\allowdisplaybreaks

\usepackage{placeins}

\numberwithin{equation}{section}

%
\newcommand{\pderivative}[2]{\frac{\partial #1}{\partial #2}}
\newcommand{\mytext}[1]{\textnormal{\textbf{#1}}}

\newcommand{\dmat}{\mat{D}}
\newcommand{\dmatT}{\mat{D}^T}
\newcommand{\mmat}{\mat{{M}}}
\newcommand{\qmat}{\mat{{Q}}}
\newcommand{\diffmat}{\mmat^{-1}\boldsymbol{\Delta}}

\newcommand{\smat}{\mat{{S}}}
\newcommand{\zmat}{\mat{{Z}}}

\newcommand{\average}[1]{\left\{\!\left\{ #1\right\}\!\right\}}
\newcommand{\jump}[1]{\llbracket #1 \rrbracket}

\newcommand{\half}{\frac{1}{2}}
\newcommand{\fourth}{\frac{1}{4}}
\newcommand{\mat}[1]{\mathbf{#1}}
\newcommand\statevec[1]{\vec{#1}}
\newcommand\spacevec[1]{\vec{ #1}}
\newcommand{\arry}[1]{\mathbf{ #1}}

\newcommand{\hmat}{\mat{{h}}}

\definecolor{redcol}{rgb}{1.,0.,0.0}

\parskip2mm

\begin{document}
\begin{frontmatter}

\title{An Entropy Stable Nodal Discontinuous Galerkin Method for the Two Dimensional Shallow Water Equations on Unstructured Curvilinear Meshes with Discontinuous Bathymetry}
\author[unikoeln]{Niklas~Wintermeyer}
\author[unikoeln]{Andrew R.~Winters\corref{cor1}}
\cortext[cor1]{Corresponding author.}
\ead{awinters@math.uni-koeln.de}
\author[unikoeln]{Gregor J.~Gassner}
\author[fsu]{David A.~Kopriva}

\address[unikoeln]{Mathematisches Institut, Universit\"{a}t zu K\"{o}ln, Weyertal 86-90, 50931 K\"{o}ln, Germany}
\address[fsu]{Department of Mathematics, The Florida State University, Tallahassee, FL 32306, USA}

\begin{abstract}
We design an arbitrary high-order accurate nodal discontinuous Galerkin spectral element approximation for the nonlinear two dimensional shallow water equations with non-constant, possibly discontinuous, bathymetry on unstructured, possibly curved, quadrilateral meshes. The scheme is derived from an equivalent flux differencing formulation of the split form of the equations. We prove that this discretisation exactly preserves the local mass and momentum. Furthermore, combined with a special numerical interface flux function, the method exactly preserves the mathematical entropy, which is the total energy for the shallow water equations. By adding a specific form of interface dissipation to the baseline entropy conserving scheme we create a provably entropy stable scheme. That is, the numerical scheme discretely satisfies the second law of thermodynamics. Finally, with a particular discretisation of the bathymetry source term we prove that the numerical approximation is well-balanced. 
We provide numerical examples that verify the theoretical findings and furthermore provide an application of the scheme for a partial break of a curved dam test problem.
\end{abstract}

\begin{keyword}
split form shallow water equations \sep discontinuous Galerkin spectral element method \sep summation-by-parts \sep entropy stability \sep well-balanced \sep discontinuous bathymetry
\end{keyword}

\end{frontmatter}
\section{Introduction}\label{Intro}

Fluid flows in lakes, rivers, and near coastlines are of interest in oceanography and climate modeling. For such flows the vertical scales of motion are much smaller than the horizontal scales. From this and the assumption of hydrostatic balance \cite{whitham1974}, the Euler equations can be simplified to the shallow water equations. If the fluid flows over a non-constant bottom topography the shallow water equations may be written as a hyperbolic system of balance laws
\begin{equation}\label{2DBalLaw}
\statevec{w}_t + \statevec{f}_x + \statevec{g}_y = \statevec{s}.
\end{equation}
It is well-known that solutions of the balance laws \eqref{2DBalLaw} may develop discontinuities in finite time, independent of the smoothness of the initial data. Hence, we consider solutions of the balance laws \eqref{2DBalLaw} in a weak sense that are well-defined provided the source term $\statevec{s}$ remains uniformly bounded, i.e., weak solutions of \eqref{2DBalLaw} are well-defined under the assumption that the function used to model the bottom topography is in the space $W^{1,\infty}(\mathbb{R})$, see e.g. \cite{evans2010}.

The design of numerical methods to approximate (\ref{2DBalLaw}) is driven by the need for stable, accurate and robust behaviors. For instance, the preservation of steady-state solutions is critical in problems with non-constant bottom topographies. Preserving steady solutions discretely is particularly troublesome for discontinuous bottom topographies where special discretisations of the source term are required, e.g. \cite{fjordholm2011,xing2014}. One steady-state constraint for the shallow water equations is the ``lake at rest'' condition \cite{audusse2004,leveque1998,xing2014}, since the relevant waves in a flow can be viewed as small perturbations of the lake at rest, see \cite{audusse2004}. A good numerical method for the shallow water equations should accurately capture both steady states and their small perturbations (quasi-steady flows) so as to diminish the appearance of unphysical waves with magnitude proportional to the mesh size 
(a so-called ``numerical storm'' \cite{Chertock2014}), that are normally present for numerical schemes that cannot preserve the ``lake at rest'' condition. A numerical method that exactly preserves the ``lake at rest'' steady state property is said to be \emph{well-balanced}, see e.g. \cite{audusse2004,fjordholm2011,gassner2015,leveque1998}.

Another critical requirement of the numerics is robustness and the ability of the method to remain stable and accurate, particularly the removal of aliasing errors that can drive nonlinear instabilities, and maintenance of stability even if discontinuities develop. Recent work has appeared on the use of high-order discontinuous Galerkin (DG) approximations to create robust numerical methods for the solution of systems of conservation laws, e.g., \cite{carpenter_esdg,gassner2015,kopriva2014}. These robust high-order DG methods may be derived from the perspective of mathematical entropy conservation, e.g. \cite{carpenter_esdg,merriam1989,tadmor2003}, or reformulations of the PDE into a split formulation to maintain conservation, e.g. \cite{gassner2015,kopriva2014}. The motivations behind the two approaches are similar \cite{tadmor1984}.

The split form of an equation is usually found by averaging its conservative form and non-conservative advective form. This is problematic as it is not obvious that discretisations of the split form remain conservative, yet conservation is desired for the numerical solution to exhibit correct shock speeds. Recent success has been had using diagonal norm summation-by-parts (SBP) finite difference operators to discretise the spatial derivatives in the split formulation of the equations \cite{skew_sbp2,gassner_skew_burgers,gassner_kepdg,gassner2015}. Fisher et al. \cite{skew_sbp2} show that split form operators derived from SBP derivative matrices are consistent and conservative in the Lax-Wendroff sense. There is now a known link between SBP finite difference operators and the discontinuous Galerkin spectral element approximation with Gauss-Lobatto points, e.g. \cite{gassner_skew_burgers}. This link was used in \cite{gassner2015} to derive an entropy conserving discontinuous Galerkin spectral element method (DGSEM) for the one dimensional shallow water equations. This paper exploits the links further and extends in a non-trivial way the previous work found in \cite{gassner2015} to multiple dimensions and possible discontinuous bottom topographies.

In this paper we present an entropy stable, high-order discontinuous Galerkin spectral element approximation for the shallow water equations with a discontinuous bottom topography for unstructured and curved quadrilateral grids. The DGSEM is naturally discontinuous at element boundaries, so we ensure high-order (spectral) accuracy by placing element boundaries at discontinuities in the bottom topography. The ability to do so allows one to model realistic bottom topographies appearing in oceanography. The scheme presented here is also well-balanced, an attribute difficult to guarantee in curvilinear coordinates. We find that the numerical satisfaction of the metric identities \cite{koprivabook} (referred to in \cite{fisher2012} as the geometric conservation law) is critical to prove that the baseline scheme remains entropy conservative and well-balanced on arbitrary meshes. 

Our approach is to use results of Fisher \cite{fisher2012} and Fisher and Carpenter \cite{fisher2013} to derive an entropy conserving approximation, and from that an entropy stable one, which is possible because we can reformulate the spectral element approximation of the split form of the shallow water equations into an equivalent flux differencing structure. We use the flux differencing reformulation to prove the underlying properties of the entropy stable DGSEM, as well as to highlight how an existing DGSEM code can be altered to incorporate entropy stability.


The paper is organised as follows: in Sec. \ref{sec:skew-symmetric-swe} we begin with a brief description of the entropy analysis of the two dimensional shallow water equations. We outline the discontinuous Galerkin spectral element method with the summation-by-parts (SBP) property in Sec. \ref{sec:DG-Disc}. This section also introduces the important reformulation of the DGSEM into an equivalent flux differencing framework, which is the critical equivalence that allows us to use existing theory. We provide in Sec. \ref{sec:ECCurvilinear} a discretisation of the two dimensional shallow water equations using the flux differencing formulation that is conservative and entropy conservative on curvilinear meshes. We also provide a detailed proof that the approximation remains well-balanced. Then in Sec. \ref{sec:ent-stab} additional dissipation is added to the scheme to ensure that the approximation remains valid for flow regimes that may contain shocks. Numerical results in Sec. \ref{NumResults} demonstrate and underline our theoretical findings. Our conclusions are presented in Sec.~\ref{sec:conclusion}. Finally, \ref{sec:efficiency} provides algorithms and implementation details of how a standard DGSEM code can be altered to incorporate the newly proposed entropy stable fluxes.

\section{Shallow water equations}\label{sec:skew-symmetric-swe}

We begin with the balance law form of the two-dimensional shallow water equations
\begin{equation}
\begin{aligned}\label{sw-standard1}
 h_t + (hu)_x + (hv)_y &= 0,\\
(hu)_t + (h\,u^2+g\,h^2/2)_x + (huv)_y &= -g\,h\,b_x,\\
(hv)_t + (huv)_x + (h\,v^2+g\,h^2/2)_y &= -g\,h\,b_y,
\end{aligned}
\end{equation}
which includes the continuity equation and the momentum balances. The quantity $h=h(x,y,t)$ denotes the water height measured from the bottom topography $b=b(x,y)$ with the total height given by $H=h+b$. Additionally the constant $g$ is the gravitational acceleration. The fluid velocities are given by $u=u(x,y,t)$ in the $x-$direction and $v=v(x,y,t)$ in the $y-$direction.
The SW model \eqref{sw-standard1} is compactly written as a system of balance laws (\ref{2DBalLaw})
with $\statevec{w}=(h,hu,hv)^T$, the fluxes 
\begin{equation}\label{eq:physicalFluxes}
\statevec{f}=(hu,h\,u^2+g\,h^2/2,huv)^T,\quad \statevec{g}=(hv,huv,h\,v^2+g\,h^2/2)^T,
\end{equation}
and the source term $\statevec{s} = (0,-ghb_x,-ghb_y)^T$.

Since the system \eqref{sw-standard1} is nonlinear, we must define in what sense our numerical approximation will be stable. The extension of the usual $L^2$ stability for linear problems is the so-called \emph{entropy stability} \cite{tadmor2003}, where a (generalized, mathematical) entropy function rather than the $L^2$ norm of the solution is non-increasing in time.
To this end we impose the entropy condition as an additional admissibility criterion on the system. 

The entropy condition states that for smooth solutions the entropy of the system is conserved and for discontinuous solutions the entropy decays. Numerical approximations that satisfy the entropy condition discretely are referred to as \emph{entropy stable}. The balance law \eqref{2DBalLaw} for the shallow water equations is equipped with a convex mathematical entropy function $e=e(\statevec{w})$ in the form of the total energy \cite{fjordholm2011}
\begin{equation}
e :=\half h\left(u^2+v^2\right) + \half gh^2 + ghb.
\end{equation}

To develop the conservation law for the entropy function $e(\statevec{w})$ we define the set of entropy variables, $\statevec{q}=(q_1,q_2,q_3)^T$, by
\begin{equation}
\label{entropyVars}
\begin{aligned}
&q_1:=\pderivative{e}{w_1}= g(h+b) - \half u^2 - \half v^2, \\
&q_2:=\pderivative{e}{w_2}= u, \\
&q_3:=\pderivative{e}{w_3}= v .
\end{aligned}
\end{equation}
To determine the entropy fluxes, $\mathcal F(\statevec{w})$ and $\mathcal G(\statevec{w})$, we use two compatibility relations that must hold between them, the entropy variables and the physical fluxes \cite{tadmor2003}
\begin{equation}\label{eq:entComp}
\mathcal F_{\statevec{w}} = \statevec{q}^{\,T}\statevec{f}_{\statevec{w}},\quad \mathcal G_{\statevec{w}} = \statevec{q}^{\,T}\statevec{g}_{\statevec{w}}. 
\end{equation}
We pre-multiply the balance law \eqref{2DBalLaw} with the entropy variables \eqref{entropyVars} and apply the conditions \eqref{eq:entComp} to find the conservation law for the entropy function 
\begin{equation}
\label{EnergyEquation2D}
e_t + \mathcal{F}_x + \mathcal{G}_y = 0.
\end{equation}
Explicitly, the entropy fluxes of the shallow water equations are
\begin{equation}
\begin{aligned}
\label{EnergyFluxes}
&\mathcal F:= \half (hu^3  +  huv^2)  + g(hu(h+b)) ,\\
&\mathcal G:= \half (hv^3 + hu^2v) + g(hv(h+b)) .
\end{aligned}
\end{equation}
In the presence of discontinuities the entropy conservation law \eqref{EnergyEquation2D} becomes the entropy inequality
\begin{equation}
\label{EnergyInequality2D}
e_t + \mathcal F_x + \mathcal G_y \leq 0.
\end{equation}
We then define an entropy conserving approximation of the nonlinear shallow water equations, (\ref{2DBalLaw}), to be one that discretely satisfies (\ref{EnergyEquation2D}) and entropy stable if (\ref{EnergyInequality2D}) is satisfied \cite{tadmor2003}.

Finally, we note that systems of balance laws have important steady state configurations where the flux and source terms are in balance. For the shallow water equations, one such steady state solution is the ``lake at rest'' condition defined by
\begin{equation}
\label{lakeAtRest}
\begin{aligned}
&h+b=\textrm{const},\\
&u=v=0.\\
\end{aligned}
\end{equation}
A numerical method that preserves the ``lake at rest'' state is said to be well-balanced. If a method is not well-balanced spurious waves on the order of the mesh size truncation error can be generated and pollute the approximation.

\section{Nodal discontinuous Galerkin spectral element method}\label{sec:DG-Disc}

The entropy stable method that we propose is a form of nodal Discontinuous Galerkin Spectral Element Method (DGSEM). In this section, we introduce the basic construction of the DGSEM on curvilinear quadrilateral grids. A more complete discussion can be found in \cite{koprivabook}. Also in this section, we provide details about the relationship of DG methods to summation-by-parts (SBP) operators, which allows us to write the approximation as an equivalent sub-cell flux differencing formulation (FDF) \cite{fisher2013}. The FDF is useful for theoretical purposes, particularly the proof of local conservation and satisfaction of the Lax-Wendroff condition. Therefore, the DGSEM has local and global conservation. Also, special choices of the flux functions in the FDF generate a DG discretisation of a split form of the original PDE. Additionally, if the underlying flux functions in the FDF are two-point entropy conserving fluxes then the FDF remains high-order and entropy conservative \cite{fisher2013}. Because of the equivalence between the FDF and the DGSEM with the SBP property this automatically generates an entropy conservative DGSEM. 


\subsection{Conservation law in curvilinear coordinates}
\label{sec:mapping}

The DGSEM approximates the system of conservation laws (\ref{2DBalLaw}) defined on a domain $\Omega$ on an unstructured mesh of quadrilateral elements. 
To simplify the discussion, we work with the components of the system (\ref{sw-standard1}) written as
\begin{equation}
\label{genericEq}
{(w_k)}_t + {(f_k(\statevec{w}))}_x+ {(g_k(\statevec{w}))}_y= s_k(w),\quad k = 1,2,3,
\end{equation}
where $w_1=h$, $w_2=hu$ and $w_3=hv$. 

We decompose $\Omega$ into non-overlapping quadrilateral elements $G$ and for computational efficiency map each element to the computational reference element $E=[-1,1]^2$. A commonly used transformation between the reference square and an arbitrary curve-sided quadrilateral element is transfinite interpolation with linear blending \cite{koprivabook}. The mapping between the coordinates of the reference square $(\xi, \eta)$ and the physical coordinates $\spacevec x = (x, y)$ is 
\begin{align}\label{metricterms}
{\spacevec x}(\xi,\eta)&=\frac{1}{2}\bigl[(1-\xi)\spacevec\Gamma_4(\eta)+(1+\xi)\spacevec\Gamma_2(\eta)+(1-\eta)\spacevec\Gamma_1(\xi)+(1+\eta) \spacevec\Gamma_3(\xi)\bigr] \nonumber \\
& -\frac{1}{4}\bigl[(1-\xi)\{(1-\eta)\spacevec\Gamma_1(-1)+(1+\eta)\spacevec\Gamma_3(-1)\}\\
& +(1+\xi)\{ (1-\eta)\spacevec\Gamma_1(1)+(1+\eta)\spacevec\Gamma_3(1)\}\bigr], \nonumber
\end{align} 
where we assume that each element is bounded by four curves $\spacevec\Gamma_j$, $j=1,2,3,4$.

Under the transformation the conservation law on $\Omega$ remains a conservation law on $E$. We transform the two dimensional balance law \eqref{genericEq} from physical space to the reference space by rewriting derivatives using the chain rules
\begin{equation}
\label{PhysicalDerivativeGeneral}
\begin{aligned}
\pderivative{w}{x} = \pderivative{w}{\xi}\pderivative{\xi}{x}+\pderivative{w}{\eta}\pderivative{\eta}{x},\\
\pderivative{w}{y} = \pderivative{w}{\xi}\pderivative{\xi}{y}+\pderivative{w}{\eta}\pderivative{\eta}{y},
\end{aligned}
\end{equation}
for some $w(x,y)$. Applying the chain rule \eqref{PhysicalDerivativeGeneral} to \eqref{genericEq} gives us the transformed conservation law in reference space 
\begin{equation}\label{newConsLaw} 
\mathcal J{(w_k)}_t + (\tilde{f}_k)_{\xi} + (\tilde{g}_k)_{\eta} = 0,
\end{equation}
where the element mapping \eqref{metricterms} is used to compute the Jacobian, $\mathcal J$, and the contravariant flux components $\tilde{f}_k,\,\tilde{g}_k$, $k=1,2,3$ according to
\begin{equation}
\label{ContravariantFluxes}
\begin{aligned}
\mathcal J&=x_{\xi}\,y_{\eta} - x_{\eta}\,y_{\xi},\\
\tilde{f}_k(\statevec{w})&=y_{\eta}\,f_k(\statevec{w}) - x_{\eta}\,g_k(\statevec{w}),\\
\tilde{g}_k(\statevec{w})&=-y_{\xi}\,f_k(\statevec{w}) + x_{\xi}\,g_k(\statevec{w}),
\end{aligned}
\end{equation}
for $k=1,2,3$.

\subsection{Polynomial approximation on the reference element}
\label{sec:poly_approximation}

The DGSEM is constructed by approximating the variables ${w}_k$ and contravariant fluxes ${\tilde{f}}_k$, ${\tilde{g}}_k$ in the reference space by polynomials of degree $N$ in each direction.  We use a nodal form of the interpolation with nodes defined at the Legendre-Gauss-Lobatto (LGL) points $\{\xi_i\}_{i=0}^N$ and $\{\eta_j\}_{j=0}^N$ in the reference square $E=[-1,1]^2$. The Lagrange basis functions for the interpolant are
\begin{equation}
\label{lagrange_basis}
 \ell_j(\xi)=\prod\limits_{i=0,i\neq j}^N\frac{\xi - \xi_i}{\xi_j-\xi_i},\qquad j=0,\ldots,N,
\end{equation}
and satisfy the cardinal property
\begin{equation}
\label{cardinal}
\ell_j(\xi_i)=\delta_{ij}, \qquad i,j=0,\ldots,N,
\end{equation}
where $\delta_{ij}$ denotes Kronecker's symbol with $\delta_{ij}=1$ for $i=j$ and $\delta_{ij}=0$ for $i\neq j$. We write the element-wise polynomial approximation (e.g for the components of $\statevec{w}$) as
\begin{equation}
\label{poly_approx}
 w_k(x,y,t)\big|_{G}=w_k(x(\xi,\eta),y(\xi,\eta),t)\approx W_k(\xi,\eta,t) :=\sum\limits_{i=0}^N\sum\limits_{j=0}^N W^{i,j}_{k}(t)\,\ell_i(\xi)\,\ell_j(\eta), \quad k=1,2,3,
\end{equation}
where $\{W_k^{i,j}(t)\}_{i=0, \, j=0}^{N,N}$ are the time dependent nodal degrees of freedom. 
The nodally represented polynomial (\ref{poly_approx}) is equivalent to an orthogonal Legendre polynomial expansion used in a modal spectral method, but is more convenient to use in the approximation of nonlinear equations like the shallow water equations. 

We use the idea of collocation throughout this work to approximate quantities derived from the $W_{k}$. For instance the velocity $u$ is approximated by a polynomial of degree $N$ in each direction \eqref{poly_approx} as well, where its nodal values are computed as
\begin{equation}
U^{i,j}:=\frac{W_2^{i,j}}{W_1^{i,j}},\quad i,j=0,\ldots,N.
\end{equation}
This collocation strategy also applies to the contravariant fluxes, where we interpolate the metric terms at the same nodes. For instance
\begin{equation}
\tilde{F}_k^{i,j}=y_{\eta}(\xi_i,\eta_j)\,F_k({\statevec{W}}^{i,j}) - x_{\eta}(\xi_i,\eta_j)\,G_k({\statevec{W}}^{i,j}), \qquad i,j=0,\ldots,N,\quad k=1,2,3
\end{equation}
where the three flux components for the shallow water equations are defined in \eqref{eq:physicalFluxes}. Similarly, the Jacobian of the transformation is approximated by the polynomial of degree $N$ with nodal values $J^{i,j} =\mathcal J(\xi_i,\eta_j)$.

For spectral approximations, the derivative is approximated elementwise directly from the derivative of the polynomial approximation, e.g.,  
\begin{equation}
\frac{\partial }{\partial \xi}W_k(\xi,\eta,t) = \sum\limits_{i=0}^N\sum\limits_{j=0}^N W_k^{i,j}(t)\,\frac{\partial }{\partial \xi}\ell_i(\xi)\,\ell_j(\eta),
\end{equation}
where $k=1,2,3$. We introduce the polynomial derivative matrix $\dmat$ with entries
\begin{equation}
\label{DmatDef}
D_{ij}:=\frac{\partial\ell_j}{\partial\xi}\Bigg|_{\xi=\xi_i},\qquad i,j=0,\ldots,N,
\end{equation}
which is used to calculate the derivative with respect to $\xi$.
Since we use the same polynomial ansatz in $\xi$ and $\eta$ direction, the derivative operator is identical in each direction. 

We can reuse the 1D operator (\ref{DmatDef}) for the 2D scheme if we store the individual slices in a 2D array. Entries in one column refer to the nodal values at constant $\eta$, entries in one row share the same $\xi$
\begin{equation}
\label{2DArrayStorage}
\begin{aligned}
\arry{W}_k  :=\begin{pmatrix}
(W_k)^{\xi=-1,\eta=-1} & \cdots & (W_k)^{\xi=-1,\eta=+1}\\
 \vdots & \ddots &  \vdots \\
(W_k)^{\xi=+1,\eta=-1} & \cdots & (W_k)^{\xi=+1,\eta=+1}
\end{pmatrix}=\begin{pmatrix}
W_k^{0,0} & \cdots & W_k^{0,N}\\
 \vdots & \ddots &  \vdots \\
W_k^{N,0} & \cdots & W_k^{N,N}
\end{pmatrix}.
\end{aligned}
\end{equation}
By storing the nodal values of variables, fluxes and metric terms in such 2D arrays, we can multiply the 1D derivative operator $\mat{D}$ from the left  to represent taking the $\xi$ derivative at each constant $\eta$. If we multiply with $\mat{D}^T$ from the right, we obtain the $\eta$ derivative. This notation allows us to write the scheme in a compact matrix-vector notation while using unmodified 1D operators.

With notation \eqref{2DArrayStorage}, the nodal values of the derivative of a nodal approximation $w$ are given by
\begin{equation}
\label{eq:discrete_derivative}
(\arry{W}_k)_\xi = \mat{D}\,\arry{W}_k\qquad\text{and}\qquad(\arry{W}_k)_\eta = \arry{W}_k\,\mat{D}^T,
\end{equation}
or in index notation
\begin{equation}
\label{eq:discrete_derivative2}
({W}_k^{i,j})_\xi = \sum_{l=0}^N{D}_{il}\,{W}^{l,j}_k\qquad\text{and}\qquad({W}^{i,j}_k)_\eta = \sum_{l=0}^ND_{li}{W}^{j,l}_k,
\end{equation}
where $i,j=0,\ldots,N$.

We demand that the discretisation preserves free-stream solutions, i.e. constant solutions of the balance law \eqref{newConsLaw} should remain constant for all times. A necessary and sufficient condition for constant state preservation is that the metric identities 
\begin{equation}\label{metricIDs}
\frac{\partial}{\partial\xi}\mathcal J\spacevec{a}^1+\frac{\partial}{\partial\eta}\mathcal J\spacevec {a}^2 = \vec{0},
\end{equation}
are satisfied, where the volume weighted contravariant basis vectors, $\mathcal J\spacevec{a}^{\,i},i=1,2$ are
\begin{equation}\label{metricTerms}
\mathcal J\spacevec{a}^1 = \left(y_\eta,-x_\eta\right)^T,\quad \mathcal J\spacevec{a}^2 = \left(-y_\xi,x_\xi\right)^T.
\end{equation}

The metric identities are not automatically satisfied for the discretisation, which, using the array notation \eqref{2DArrayStorage} for the nodal values of the metric terms and \eqref{eq:discrete_derivative} for the derivative can be expressed as
\begin{equation}\label{metricIDsDiscrete}
\begin{aligned}
\dmat \mat{y}_\eta  - \mat{y}_\xi \dmatT&= \mat{0}\\
-\dmat \mat{x}_\eta  + \mat{x}_\xi \dmatT&= \mat{0}.
\end{aligned}
\end{equation} 

Kopriva \cite{Kopriva:2006er} proved that free-stream preservation is guaranteed for the DGSEM when the linear blending formula (\ref{metricterms}) is used and boundaries of the quadrilateral elements are approximated by polynomials with an order equal to (or lower than) the polynomial order of the approximate solution. 
Thus, we use an isoparametric approximation in which each boundary curve $\spacevec \Gamma_j(\zeta)$, $\zeta\in[-1,1]$ of an element $G$ is approximated by a polynomial of order $N$. We use the same Lagrange basis functions \eqref{lagrange_basis} to approximate the boundary curves 
\begin{equation}\label{Gammas}
\spacevec \Gamma = \sum_{j=0}^N  \spacevec\Gamma(\zeta_j)\ell_j(\zeta),
\end{equation}
where due to their robust interpolation properties \cite{koprivabook}, the nodes $\{\zeta_j\}_{j=0}^N$ are typically chosen to be the Chebyshev-Gauss-Lobatto or Legendre-Gauss-Lobatto nodes. The polynomial boundary curve approximations \eqref{Gammas} are used to construct the mapping \eqref{metricterms} for each element. As the mapping is a polynomial in $\xi$ and $\eta$, the derivatives necessary to obtain the metric terms and the normal vectors are computed directly in the discrete derivative sense \eqref{eq:discrete_derivative}. Further details of the isoparametric polynomial approximation of boundary curves can be found in \cite{Kopriva:2006er,koprivabook}.

\subsection{Discontinuous Galerkin spectral element method (DGSEM)}
\label{sec:dgsem}

Following a standard approach we derive a nodal discontinuous Galerkin scheme, e.g. \cite{koprivabook} or \cite{hesthaven_warburton}. We will discuss the specific form of the approximation of the source term in Sec.~\ref{sec:ECDGSEM2D}. Omitting the source term, the nodal discontinuous Galerkin method in weak form of the transformed conservation law \eqref{newConsLaw} reads
\begin{equation}
\label{genericIntFormWeak2}
\int_{E,N}\, J\,(W_k)_t \,\varphi\,d{\xi}d{\eta} - \int_{E,N}\,\tilde{F}_k \varphi_{\xi}\,d{\xi}d{\eta}- \int_{E,N}\,\tilde{G}_k\varphi_{\eta}\,d{\xi}d{\eta} = -\oint_{\partial E,N} \left(\tilde{F}_k^*(\statevec W^+,\statevec W^-),\tilde{G}_k^*(\statevec W^+,\statevec W^-)\right)\cdot\hat{n}\,\varphi\,dS,
\end{equation}
for $k=1,2,3$ and in the equivalent strong form \cite{KoprivaGassner_GaussLob}
\begin{equation}
\label{genericIntFormStrong2}
\int_{E,N}\,\left(J\,(W_k)_t+ (\tilde{F}_k)_{\xi} + (\tilde{G}_k)_{\eta} \right)\,\varphi\,d{\xi}d{\eta}  = -\oint_{\partial E,N} \left(\tilde{F}_k^*(\statevec W^+,\statevec W^-)- \tilde{F}_k,\tilde{G}_k^*(\statevec W^+,\statevec W^-)-\tilde{G}_k\right)\cdot\hat{n} \varphi\,dS,
\end{equation}
for $k=1,2,3$, where $\varphi$ is a polynomial of degree $N$ or less in each space direction. We also use the notation $\int_{E,N}$  to represent Legendre-Gauss-Lobatto Quadrature, which is the tensor product of one space dimension quadrature
\begin{equation}
\int\limits_{-1,N}^{1}f(\xi)\,d\xi\approx\sum\limits_{j=0}^N f(\xi_j)\omega_j,
\end{equation}
with $\{\omega_j\}_{j=0}^N$ being the LGL quadrature weights. The quadrature is exact if the integrand $f$ is a polynomial of degree $2N-1$ or less.
The numerical fluxes $\tilde{F}_k^*$ and $\tilde{G}_k^*$, in the normal direction, couple neighbouring elements. We indicate this coupling by the dependence on the ``outer'' and ``inner'' values $\statevec W^+,\statevec W^-$ along the normal vector $\hat{n}$. The boundary integrals in \eqref{genericIntFormWeak2} and \eqref{genericIntFormStrong2} describe the integration along the four edges of the element $E$. 

The two forms (\ref{genericIntFormWeak2}) and (\ref{genericIntFormStrong2}) of the DGSEM are algebraically equivalent because the quadrature satisfies a \emph{summation-by-parts} rule \cite{KoprivaGassner_GaussLob}. In one space dimension, and for any two polynomials $U\left(\xi\right)$ and $V\left(\xi\right)$, exactness or the quadrature implies that
\begin{equation}
\sum\limits_{j = 0}^N {{U_j}{{V'}_j}{\omega _j}}  = \int_{ - 1}^1 {U{V_\xi }d\xi }  = \left. {UV} \right|_{ - 1}^1 - \int_{ - 1}^1 {{U_\xi }Vd\xi }  = \left. {UV} \right|_{ - 1}^1 - \sum\limits_{j = 0}^N {{{U'}_j}{V_j}{\omega _j}} .
\end{equation}
In other words,
\begin{equation}
\int_{ - 1,N}^1 {U{V_\xi }d\xi }  = \left. {UV} \right|_{ - 1}^1 - \int_{ - 1,N}^1 {{U_\xi }Vd\xi } .
\label{eq:SBPIntegral}
\end{equation}
The result extends to two and three space dimensions \cite{KoprivaGassner_GaussLob}. 


The formal statements of the DGSEM, (\ref{genericIntFormWeak2}) and (\ref{genericIntFormStrong2}) can be reduced to a pointwise form \cite{koprivabook}, which in turn can be represented in a matrix form where the nodal values are represented as arrays. 
To get equations for the nodal degrees of freedom, we take $\varphi=\ell_i(\xi)\ell_j(\eta)$. Then, for example,
\begin{equation}
\int_{E,N} {J{(W_k)_t}\varphi d\xi d\eta }  = \sum\limits_{n,m = 0}^N {{\omega _n}{\omega _m}{J^{n,m}}(W^{n,m}_k)_t{\ell _i}\left( {{\xi _n}} \right){\ell _j}\left( {{\eta _m}} \right)}  = {\omega _i}{\omega _j}{J^{i,j}}(W^{i,j}_k)_t\quad i,j=0,1,2,\ldots,N,
\label{eq:PtwWiseTimeIntegral}
\end{equation}
for $k=1,2,3$.

We can represent each component (\ref{eq:PtwWiseTimeIntegral}) in terms of the matrix-array notation introduced in (\ref{2DArrayStorage}) and (\ref{eq:discrete_derivative}) as 
\begin{equation}
\mat M \arry J\circ\left( \arry W_{k}\right)_{t} \mat M,
\end{equation}
where $\mat M$ is the diagonal matrix of the quadrature weights  (the \emph{mass matrix}), 
\begin{equation}
\mmat := \textrm{diag}(\omega_0,\ldots,\omega_N),
\end{equation}
and we introduce the notation of a component-wise Hadamard product of two matrices with the same dimension
\begin{equation}
\label{DefHadamard}
\arry{A}\circ\arry{B}=\arry{C} ,\quad \textrm{with} \quad  c_{ij}=a_{ij}\,b_{ij},\quad i,j=0,\ldots,N.
\end{equation}
Similarly, the integral of the $k-th$ component of the $\xi$ contravariant flux in strong form is
\begin{equation}
\int_{E,N} {{{\left( {{\tilde{F}_k}} \right)}_\xi }\varphi d\xi d\eta }  = \sum\limits_{n,m = 0}^N {{\omega _n}{\omega _m}\left( {\sum\limits_{l = 0}^N {\tilde{F}_k^{l,m}{D_{nl}}} } \right){\ell _i}\left( {{\xi _n}} \right){\ell _j}\left( {{\eta _m}} \right)}  = {\omega _i}{\omega _j}\left( {\sum\limits_{l = 0}^N {\tilde{F}_k^{l,j}{D_{il}}} } \right),
\label{eq:VolTermForm1}
\end{equation}
so we see that 
\begin{equation}
\int_{E,N} {{(\tilde {F}_k)_\xi }\varphi d\xi d\eta }\rightarrow \mmat \dmat \arry{\tilde F}_{k}\mat M.
\label{eq:MDF}
\end{equation}
On the other hand, for the weak formulation,
\begin{equation}
\int_{E,N} {{\tilde {F}_k}{\varphi _\xi }d\xi d\eta }  = \sum\limits_{n,m = 0}^N {{\omega _n}{\omega _m}\tilde {F}_k^{n,m}{{\ell '}_i}\left( {{\xi _n}} \right){\ell _j}\left( {{\eta _m}} \right)}  = \left( {\sum\limits_{n = 0}^N {{\omega _n}{\omega _j}\tilde {F}_k^{n,j}D_{in}^T} } \right),
\end{equation}
so
\begin{equation}
\int_{E,N} {{\tilde {F}_k}{\varphi _\xi }d\xi d\eta } \rightarrow \dmat^{T}\mmat\arry{\tilde F}_{k}\mmat,
\end{equation}
with a similar result for the $\eta$ direction flux.

The boundary quadratures for the weak form, (\ref{genericIntFormWeak2}), are
\begin{equation}
\begin{split}
\oint_{\partial E,N} {\left( {\tilde F_k^*,\tilde G_k^*} \right) \cdot \hat nd\xi \varphi d\eta }  &= \int_{ - 1,N}^1 {\tilde F_k^*\left( {1,\eta } \right)\varphi d\eta }  - \int_{ - 1,N}^1 {\tilde F_k^*\left( { - 1,\eta } \right)\varphi d\eta }  \\&\quad+ \int_{ - 1,N}^1 {\tilde G_k^*\left( {\xi ,1} \right)d\xi }  - \int_{ - 1,N}^1 {\tilde G_k^*\left( {\xi , - 1} \right)\varphi d\xi. } 
\end{split}\end{equation}
Each term reduces to pointwise values in the same way. The first term, for instance, is
\begin{equation}
\int_{ - 1,N}^1 {\tilde F_k^*\left( {1,\eta } \right)\varphi d\eta }  = \int_{ - 1,N}^1 {\tilde F_k^*\left( {1,\eta } \right){\ell _i}\left( 1 \right){\ell _j}\left( \eta  \right)d\eta }  = \omega_{j}{\left( {\tilde F_k^*} \right)^{N,j}}.
\end{equation}
We represent the numerical fluxes $\mat{\tilde{F}}_k^*$ and $\mat{\tilde{G}}_k^*$ consistent to the 2D array notation \eqref{2DArrayStorage} this time so that the only non-zero entries correspond to the respective interfaces, i.e. 
\begin{equation}
\begin{aligned}
\label{NumFluxMatrixForm}
&\mat{\tilde{F}_k^*} :=\begin{pmatrix}
(\tilde{F}_k^*)^{0,0} & \cdots & (\tilde{F}_k^*)^{0,N}\\
0 & \cdots & 0 \\
\vdots &  & \vdots \\
0 & \cdots & 0 \\
(\tilde{F}_k^*)^{N,0} & \cdots & (\tilde{F}_k^*)^{N,N}
\end{pmatrix},\quad
\mat{\tilde{G}^*} :=\begin{pmatrix}
(\tilde{G}^*_k)^{0,0} & 0 & \cdots & 0 & (\tilde{G}^*_k)^{0,N}\\
\vdots & \vdots &  & \vdots & \vdots\\
(\tilde{G}^*_k)^{N,0} & 0 & \cdots & 0 & (\tilde{G}^*_k)^{N,N}
\end{pmatrix},
\end{aligned}
\end{equation}
so that $\mat{\tilde{F}}_k^*$ only appears at $\xi=\pm1$ and $\mat{\tilde{G}}_k^*$ appears at $\eta = \pm1$ on the reference element $E$. 

To write the approximations in a compact form, we define the matrix operators 
\begin{equation}
\begin{aligned}
\mat{\hat{D}} &:= -\mmat^{-1}\dmat^T\mmat & \text{ (scaled derivative matrix)},\\
\mat{S} &:= \textrm{diag}\left(\frac{1}{\omega_0},0,\ldots,0,-\frac{1}{\omega_N}\right) & \text{ (surface matrix)},
\end{aligned}
\end{equation}
with the derivative matrix $\mat{D}$ defined in \eqref{DmatDef}. Then we can rewrite the equations for the nodal degrees of freedom of the DGSEM approximations (\ref{genericIntFormWeak2}) and (\ref{genericIntFormStrong2}) in the algebraically equivalent forms
\begin{equation}
\label{genericWeakDGSEM}
\arry{J}\circ(\arry{W}_k)_t + \hat{\dmat}\,\arry{\tilde{F}}_k + \arry{\tilde{G}}_k\,\hat{\dmat}^T = \mat{S}\,\arry{\tilde{F}}_k^* + \arry{\tilde{G}}_k^*\,\mat{S},
\end{equation}
and
\begin{equation}
\label{genericStrongDGSEM}
\mat{J}\circ(\mat{W}_k)_t + \dmat\,\mat{\tilde{F}}_k + \mat{\tilde{G}}_k\,\dmatT = \mat{S}\left(\mat{\tilde{F}}_k^*-\mat{\tilde{F}}_k\right) + \left(\mat{\tilde{G}}_k^*-\mat{\tilde{G}}_k\right)\mat{S},
\end{equation}
for $k=1,2,3$. We note that to obtain the results \eqref{genericWeakDGSEM} and \eqref{genericStrongDGSEM} we multiplied by the inverse of $\mmat$ on the left and right.

\subsection{An equivalent subcell flux differencing form}\label{sec:FluxDiff}

 The most important property the DGSEM operators constructed with the LGL quadrature nodes have is that they are summation-by-parts (SBP) operators for all polynomial orders, (\ref{eq:SBPIntegral}). This property can be represented in the form of SBP-SAT finite difference operators \cite{gassner_skew_burgers}, and we collect relevant results here in Lemma \ref{Lem:Lemma1}.
 
 \begin{Lem}[SBP-Properties] \label{Lem:Lemma1}Let the matrix $\qmat:=\mmat\dmat$, which represents the mass weighed derivative, as seen in (\ref{eq:MDF}).  The matrix $\qmat$ has the SBP-property 
 \[
 \qmat + \qmat^T=\mat{B}:=\textrm{diag}(-1,0,\ldots,0,1)\quad (SBP).
 \]
 Furthermore, the SBP-property can be used to obtain alternative expressions for the derivative matrix
\begin{equation}
\label{DGidentities}
\begin{aligned}
&\dmat=\mmat^{-1}\qmat = \mmat^{-1}(\mat{B}-\qmat^T)=-\smat-\mmat^{-1}\qmat^T,\\
&\dmatT=(\mmat^{-1}\qmat)^T =  -\smat-\qmat\mmat^{-1}.
\end{aligned}
\end{equation}
So, the derivative matrix $\hat{\dmat}$ of the weak DG formulation
\begin{equation}
\label{DefDHat}
\hat{\dmat}=-\mmat^{-1}\qmat^T,
\end{equation}
satisfies the relations
\begin{equation}\label{IntByPartsDisc}
\begin{aligned}
&\dmat=-\smat+\hat{\dmat},\\
&\dmatT=  -\smat+\hat{\dmat}^T.
\end{aligned}
\end{equation}
\end{Lem}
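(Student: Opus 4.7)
The plan is to derive the SBP identity $\mat{Q}+\mat{Q}^T=\mat{B}$ directly from the quadrature summation-by-parts rule \eqref{eq:SBPIntegral}, and then obtain the remaining identities by pure matrix algebra. The reason this works cleanly is that $\mat{Q}=\mmat\dmat$ has entries $Q_{ij}=\omega_i D_{ij}=\omega_i\ell_j'(\xi_i)$, which are precisely the quadrature approximations of $\int \ell_i\,\ell_j'\,d\xi$.

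First I would test \eqref{eq:SBPIntegral} against the pair $U=\ell_i$, $V=\ell_j$. On the left, using the cardinal property \eqref{cardinal}, the discrete integral reduces to $\omega_i\ell_j'(\xi_i)=\omega_i D_{ij}=Q_{ij}$. The boundary term $[\ell_i\ell_j]_{-1}^{1}$ collapses, again by \eqref{cardinal}, to $\delta_{iN}\delta_{jN}-\delta_{i0}\delta_{j0}=B_{ij}$, since the only LGL nodes at $\pm 1$ are $\xi_0=-1$ and $\xi_N=1$. The remaining discrete integral is $\omega_j\ell_i'(\xi_j)=\omega_j D_{ji}=Q_{ji}=(\mat{Q}^T)_{ij}$. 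Hence $Q_{ij}+(\mat{Q}^T)_{ij}=B_{ij}$ for every pair $(i,j)$, which proves the SBP relation.

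Next I would turn the SBP identity into the stated alternative expressions for $\dmat$. By definition $\dmat=\mmat^{-1}\qmat$, and rearranging $\qmat=\mat{B}-\qmat^T$ yields $\dmat=\mmat^{-1}\mat{B}-\mmat^{-1}\qmat^T$. A direct computation of the diagonal product $\mmat^{-1}\mat{B}$ gives $\operatorname{diag}(-1/\omega_0,0,\ldots,0,1/\omega_N)=-\smat$, so $\dmat=-\smat-\mmat^{-1}\qmat^T$, which is \eqref{DGidentities}. Transposing (and using that $\mmat$, $\smat$, and $\mat{B}$ are diagonal and hence symmetric) produces the corresponding identity for $\dmatT$.

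Finally, setting $\hat{\dmat}:=-\mmat^{-1}\qmat^T$ as in \eqref{DefDHat} and substituting into the two identities of \eqref{DGidentities} immediately gives $\dmat=-\smat+\hat{\dmat}$ and, by transposition, $\dmatT=-\smat+\hat{\dmat}^T$, completing \eqref{IntByPartsDisc}. I do not anticipate a real obstacle here: the only nontrivial step is recognizing that LGL quadrature is exact for the degree $2N-1$ integrand $\ell_i\ell_j'$ so that \eqref{eq:SBPIntegral} applies with equality; once that is in hand, the rest is cardinal-function bookkeeping and diagonal-matrix manipulation.
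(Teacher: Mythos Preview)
Your proof is correct and complete. Note, however, that the paper does not actually prove this lemma: its ``proof'' consists solely of citations to the SBP literature (Kreiss--Scherer, Strand, Gassner, Carpenter et al.). So there is no argument in the paper to compare against; you have supplied a self-contained derivation where the authors chose to defer to references. Your approach---instantiating the discrete integration-by-parts identity \eqref{eq:SBPIntegral} with $U=\ell_i$, $V=\ell_j$ and reading off $Q_{ij}+Q_{ji}=B_{ij}$ via the cardinal property---is the standard one found in those references, and the subsequent diagonal-matrix manipulations are exactly as you describe. The observation that $\ell_i\ell_j'$ has degree $2N-1$ so that LGL quadrature is exact is precisely what makes \eqref{eq:SBPIntegral} hold with equality, and the paper already invokes this just above \eqref{eq:SBPIntegral}, so you are on solid ground.
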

\begin{proof}
See, for example, \cite{kreiss1,sbp1,gassner_skew_burgers,carpenter_esdg,gassner_kepdg}.
\end{proof}
\begin{rem}
Although the finite difference and spectral element approximations differ (e.g. the spectral element approximate solution is known everywhere, including in between the nodes), the fact that the nodal equations can be written in the same form will allow us to simply use results proved for SBP finite difference methods as needed.
\end{rem}

The LGL-based DGSEM operators listed in Lemma  \ref{Lem:Lemma1} are in the sub-class of SBP operators with diagonal norm matrix $\mat{M}$. For this class of diagonal norm SBP operators, Fisher and Carpenter \cite{fisher2013} proved an astounding relationship: Such operators can always be re-written into an algebraically equivalent subcell finite volume type differencing formulation. As an example we rewrite the contravariant flux in the $x-$direction into a telescoping flux form (the contravariant flux in the $y-$direction has an analogous form):
\begin{equation}
\dmat{\arry{\tilde{F}}}_k = \mmat^{-1}\qmat{\arry{\tilde{F}}}_k = \diffmat\overline{\arry{\tilde{F}}}_k,
\end{equation}
where $\boldsymbol{\Delta}$ is the $N\times N+1$ differencing matrix 
\begin{equation}
\boldsymbol{\Delta} = 
\begin{pmatrix}
-1 &  1 &  0 & 0 & 0 & 0 \\
 0 & -1 &  1 & 0 & 0 & 0 \\
 0 &  0 & \ddots & \ddots & 0 & 0 \\
 0 &  0 & 0 & -1 & 1 & 0 \\
 0 & 0 & 0 & 0 & -1 & 1
\end{pmatrix}.
\end{equation}
The new flux functions, denoted with an overbar, can be viewed as subcell fluxes on a complementary staggered subcell grid \cite{fisher2013,skew_sbp2}. The contravariant flux functions on the complementary grid remain consistent and high-order when they are computed according to \cite[Section 4.5 and Appendix A.3]{fisher2012}
\begin{equation}\label{highOrderFluxCurvilinear}
\left\{
\begin{aligned}
\bar{\tilde{F}}_k^{0,j} &= \tilde{F}_k^{0,j}, \\
\bar{\tilde{F}}_k^{i,j} &= \sum_{m=i}^{N}\sum_{\ell=0}^{i-1}2\,Q_{\ell m}\left(F_k^{vol}(\statevec W^{\ell,j}, \statevec W^{m,j})\average{Ja^1_1}_{(\ell,m),j}+G_k^{vol}(\statevec W^{\ell,j}, \statevec W^{m,j})\average{Ja^1_2}_{(\ell,m),j}\right),\quad i = 1,\ldots,N,\\
\bar{\tilde{F}}_k^{N+1,j} &= \tilde{F}_k^{N,j},
\end{aligned}
\right.
\end{equation}
for $k=1,2,3$, a fixed point $j$ in the $y-$direction and for some two point, symmetric flux functions $F_k^{vol}$ and $G_k^{vol}$, e.g.,
\begin{equation}
F_k^{vol}(\statevec W^{\ell,j}, \statevec W^{m,j}) = F_k^{vol}(\statevec W^{m,j}, \statevec W^{\ell,j}).
\end{equation}
The $\tilde{F}_k^{0,j}$ and $\tilde{F}_k^{N,j}$ are the typical contravariant fluxes. The metric terms are given in \eqref{metricTerms}, and the arithmetic mean is defined as
\begin{equation}
\average{\cdot}_{(\ell,m),j} := \half\left((\cdot)^{\ell,j}+(\cdot)^{m,j}\right). 
\end{equation}

For computational efficiency we generalise a previous result of Fisher and Carpenter \cite[Eq. (3.13)]{fisher2013} and rewrite the flux differencing volume term on curvilinear meshes.
\begin{Prop}[Flux Diffferencing with Metric Terms]\label{prop1}
One can use the structure of the SBP matrix $\qmat$ and the fluxes on the complimentary grid to eliminate one of the sums in \eqref{highOrderFluxCurvilinear} to write the flux difference in the $x-$direction, $\diffmat\overline{\mat{\tilde{F}}}_k$, in the indicial form 
\begin{equation}\label{eq:fluxDiffForm}
\frac{\bar{\tilde{F}}^{{i+1},j}_k - \bar{\tilde{F}}_k^{i,j}}{\omega_i} = \frac{1}{\omega_i}\sum_{m=0}^N2\,Q_{i m}\left(F_k^{vol}(\statevec{W}^{i,j},\statevec{W}^{m,j})\average{Ja^1_1}_{(i,m),j}+G_k^{vol}(\statevec{W}^{i,j},\statevec{W}^{m,j})\average{Ja^1_2}_{(i,m),j}\right),
\end{equation}
for $ i = 0,\ldots,N$. The flux difference in the $y$-drection, $\overline{\mat{\tilde{G}}}_k\boldsymbol{\Delta}^T\mmat^{-1}$, can be expressed in a similar indicial form
\begin{equation}\label{eq:fluxDiffFormG}
\frac{\bar{\tilde{G}}^{i,{j+1}}_k - \bar{\tilde{G}}_k^{i,j}}{\omega_j} = \frac{1}{\omega_j}\sum_{m=0}^N2\,Q_{j m}\left(F_k^{vol}(\statevec{W}^{i,j},\statevec{W}^{i,m})\average{Ja^2_1}_{i,(j,m)}+G_k^{vol}(\statevec{W}^{i,j},\statevec{W}^{i,m})\average{Ja^2_2}_{i,(j,m)}\right),
\end{equation}
for $ j = 0,\ldots,N$.
\end{Prop}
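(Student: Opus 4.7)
\textbf{Proof plan for Proposition \ref{prop1}.} The plan is a direct manipulation of the double sum in \eqref{highOrderFluxCurvilinear}, exploiting the symmetry of the volume flux functions and the arithmetic means together with the SBP property $\qmat+\qmat^T=\mat B$. Introduce the abbreviation
\begin{equation*}
\Psi_{\ell,m,j} := 2\,Q_{\ell m}\,\Phi_{\ell,m,j},\qquad
\Phi_{\ell,m,j}:= F_k^{vol}(\statevec{W}^{\ell,j},\statevec{W}^{m,j})\average{Ja^1_1}_{(\ell,m),j}+G_k^{vol}(\statevec{W}^{\ell,j},\statevec{W}^{m,j})\average{Ja^1_2}_{(\ell,m),j},
\end{equation*}
so $\Phi_{\ell,m,j}=\Phi_{m,\ell,j}$ by the symmetry of $F_k^{vol}$, $G_k^{vol}$ and the arithmetic mean. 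The target identity to prove is
\begin{equation*}
\bar{\tilde F}_k^{i+1,j}-\bar{\tilde F}_k^{i,j}=\sum_{m=0}^{N}\Psi_{i,m,j},\qquad i=0,\dots,N,
\end{equation*}
after which one divides by $\omega_i$. An analogous argument (with the roles of $\xi$ and $\eta$ interchanged and using $\mathcal J\spacevec{a}^2$ in place of $\mathcal J\spacevec{a}^1$) yields \eqref{eq:fluxDiffFormG}.

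For interior indices $0<i<N$ the two double sums from \eqref{highOrderFluxCurvilinear} share the common block $\sum_{m=i+1}^{N}\sum_{\ell=0}^{i-1}\Psi_{\ell,m,j}$, which cancels in the difference $\bar{\tilde F}_k^{i+1,j}-\bar{\tilde F}_k^{i,j}$. What remains is
\begin{equation*}
\bar{\tilde F}_k^{i+1,j}-\bar{\tilde F}_k^{i,j}=\sum_{m=i+1}^{N}\Psi_{i,m,j}-\sum_{\ell=0}^{i-1}\Psi_{\ell,i,j}.
\end{equation*}
Now apply the SBP identity in its pointwise form $Q_{\ell m}+Q_{m\ell}=B_{\ell m}$. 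For $\ell\neq m$ the right-hand side vanishes, which together with the symmetry of $\Phi$ gives $\Psi_{\ell,i,j}=-\Psi_{i,\ell,j}$. Since $Q_{ii}=0$ for $0<i<N$, one also has $\Psi_{i,i,j}=0$, so the second sum above can be reindexed as $-\sum_{m=0}^{i-1}\Psi_{m,i,j}=\sum_{m=0}^{i-1}\Psi_{i,m,j}=\sum_{m=0}^{i}\Psi_{i,m,j}$, producing exactly $\sum_{m=0}^{N}\Psi_{i,m,j}$, as desired.

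The main obstacle, and the step that deserves careful treatment, is matching the two endpoints $i=0$ and $i=N$, because the piecewise definition in \eqref{highOrderFluxCurvilinear} forces $\bar{\tilde F}_k^{0,j}=\tilde F_k^{0,j}$ and $\bar{\tilde F}_k^{N+1,j}=\tilde F_k^{N,j}$, while the corresponding diagonal SBP entries $Q_{00}=-\tfrac{1}{2}$ and $Q_{NN}=\tfrac{1}{2}$ do not vanish. I would handle this by invoking consistency of the two-point volume fluxes, $F_k^{vol}(\statevec W,\statevec W)=F_k(\statevec W)$ and $G_k^{vol}(\statevec W,\statevec W)=G_k(\statevec W)$, together with \eqref{metricTerms} and \eqref{ContravariantFluxes}, which yields $\Phi_{0,0,j}=\tilde F_k^{0,j}$ and $\Phi_{N,N,j}=\tilde F_k^{N,j}$. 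Then $\Psi_{0,0,j}=2Q_{00}\Phi_{0,0,j}=-\tilde F_k^{0,j}$ and $\Psi_{N,N,j}=2Q_{NN}\Phi_{N,N,j}=\tilde F_k^{N,j}$; substituting these into the corresponding endpoint telescoping difference closes precisely the gap left by the non-zero diagonal $Q$ entries, and the identity $\bar{\tilde F}_k^{i+1,j}-\bar{\tilde F}_k^{i,j}=\sum_{m=0}^{N}\Psi_{i,m,j}$ extends to $i=0$ and $i=N$. Dividing by $\omega_i$ gives \eqref{eq:fluxDiffForm}, and the $\eta$-direction formula \eqref{eq:fluxDiffFormG} is proved by the exact same bookkeeping on the second index.
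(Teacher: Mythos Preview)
Your proof is correct and follows essentially the same strategy as the paper: subtract adjacent complementary-grid fluxes, cancel the common block in the double sum, and use the (near) skew-symmetry of $\qmat$ together with the symmetry of $\Phi_{\ell,m,j}$ to rewrite the leftover single sums as $\sum_{m=0}^N\Psi_{i,m,j}$. The paper's appendix carries out this computation concretely for the single interior case $i=1$ (subtracting $\bar{\tilde F}_k^{2,j}-\bar{\tilde F}_k^{1,j}$, invoking $-Q_{01}=Q_{10}$ and $Q_{11}=0$) and then simply asserts the general formula by analogy, without explicitly treating the endpoints $i=0$ and $i=N$. Your argument is in fact more complete: you handle the general interior index abstractly and then close the endpoint cases by invoking consistency of the two-point volume fluxes, $F_k^{vol}(\statevec W,\statevec W)=F_k(\statevec W)$ and $G_k^{vol}(\statevec W,\statevec W)=G_k(\statevec W)$, so that $\Phi_{0,0,j}=\tilde F_k^{0,j}$ and $\Phi_{N,N,j}=\tilde F_k^{N,j}$ exactly absorb the nonzero diagonal entries $Q_{00}=-\tfrac12$, $Q_{NN}=\tfrac12$. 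This consistency assumption is implicit in the paper (it is needed for the scheme to make sense and is certainly satisfied by the concrete fluxes \eqref{volFlux2D}) but is never made explicit in the proof of Proposition~\ref{prop1}; your version makes the logical dependence transparent.
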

\begin{proof}
The details to derive \eqref{eq:fluxDiffForm} and \eqref{eq:fluxDiffFormG} are straightforward and can be found in \ref{sec:fluxDiffFormProof}.
\end{proof}

With the definition of the flux differencing formulation complete, we must select the specific form for the volume fluxes $\statevec{F}^{vol}$ and $\statevec{G}^{vol}$. Depending on the choice of the volume flux it is possible for the flux differencing scheme to recover discretisations of alternative split forms of the PDE. The split form of a PDE, often found by averaging the advective and conservative form of the equations, is known to have stabilisation properties for non-linear PDE discretisations \cite{blaisdell1996effect,ducros2000,kennedy2008}. But, due to their form, it is often unclear if the approximation remains globally conservative in the sense of Lax-Wendroff. We see from the telescoping flux difference formulations \eqref{eq:fluxDiffForm} and \eqref{eq:fluxDiffFormG} that it is trivial to show conservation of the scheme while maintaining the flexibility and positive stabilisation properties of the split form. 

In particular, a split form for the derivative of a product of two quantities is
\begin{equation}\label{eq:two_split}
\begin{split}
(a\,b)_x &= \frac{1}{2}\,(a\,b)_x + \frac{1}{2}\left(a_x\,b + a\,b_x\right),
\end{split}
\end{equation}
To demonstrate the split form property of the flux differencing form we select the form for the first component of the volume flux, say $F_1^{vol}(\statevec{W}^{i,j},\statevec{W}^{m,j})$, to be 
\begin{equation}\label{eq:anAverage}
F_1^{vol}(\statevec{W}^{i,j},\statevec{W}^{m,j}) = \half\left((W_1)^{i,j} +(W_1)^{m,j} \right) = \average{W_1}_{(i,m),j}.
\end{equation}
We substitute \eqref{eq:anAverage} into the first term on the right of \eqref{eq:fluxDiffForm} and after some manipulation obtain
\begin{equation}
\label{ExtensionEq}
\begin{aligned}
\frac{1}{\omega_i}\sum_{m=0}^N2\,Q_{i m}\average{W_1}_{(i,m),j}\average{Ja^1_1}_{(i,m),j} &= \frac{1}{2\omega_i}\left(\sum_{m=0}^N Q_{im}(W_1)^{m,j}(Ja_1^1)^{m,j} + (W_1)^{i,j}\sum_{m=0}^NQ_{im}(Ja_1^1)^{m,j}\right.\\
&\qquad\qquad+\left.(Ja_1^1)^{i,j}\sum_{m=0}^NQ_{im}(W_1)^{m,j}\right),
\end{aligned}
\end{equation}
which is the $i-$th row for the discretisation of \eqref{eq:two_split}
\begin{equation}\label{eq:fullSplitForm}
\left(Ja_1^1W_1\right)_x\approx \half\left(\dmat\left(\mat{Ja}_1^1\circ\mat{W}_1\right) + \mat{Ja}_1^1\circ\dmat\mat{W}_1 + \mat{W}_1\circ\dmat\mat{Ja}_1^1\right).
\end{equation}
That is, the product of two averages in \eqref{eq:fluxDiffForm} creates a discretisation of the standard split form of a quadratic product. In a similar fashion the product of three averages creates a discretisation of the standard split form of a triple product proposed by Kennedy and Gruber \cite{kennedy2008}. This is a remarkable property of the flux differencing form \eqref{eq:fluxDiffForm}. By inserting an arithmetic mean or products of arithmetic means into the flux differencing scheme creates a discrete version of a particular split form of the equation. Complete details and proofs of this property of \eqref{eq:fluxDiffForm} can be found in \cite{gassner2016}.

So, rewriting the volume contributions of the DGSEM into the flux differencing form grants us additional flexibility to construct an approximation that discretises alternative split forms of the PDE. By using the alternative split form of the shallow water equations it is possible to create an entropy conservative numerical approximation \cite{gassner2015}. This gives us the motivation to select the internal volume fluxes in such a way that the total energy of the numerical scheme will be conserved discretely. We note that the only alteration needed to change a standard DGSEM code to an entropy stable one is to change the volume contributions to the flux differencing form and select appropriate volume fluxes, see \ref{sec:efficiency}.

To summarize, we have rewritten the volume contributions of the standard strong form DGSEM \eqref{genericStrongDGSEM} into an equivalent flux differencing framework
\begin{equation}\label{fluxDiffStrong}
\mat{J}\circ(\arry{W}_k)_t + \diffmat\overline{\mat{\tilde{F}}}_k+ \overline{\mat{\tilde{G}}}_k\boldsymbol{\Delta}^T\mmat^{-1} = \mat{S}\left(\mat{\tilde{F}}_k^*-\arry{\tilde{F}}_k\right) + \left(\arry{\tilde{G}}_k^*-\arry{\tilde{G}}_k\right)\mat{S},\quad k =1,2,3,
\end{equation}
where the flux differencing is computed in each direction according to the form \eqref{eq:fluxDiffForm} or \eqref{eq:fluxDiffFormG}. The flux differencing representation guarantees that with a chosen pair of symmetric volume fluxes the approximation \eqref{fluxDiffStrong} remains high-order and conservative. Additionally, if the volume fluxes in \eqref{eq:fluxDiffForm} and the surface fluxes in \eqref{genericStrongDGSEM} are carefully constructed then the approximation is also provably entropy conservative \cite{fisher2013}. Note that one can independently select an entropy conservative volume flux that has a different form than an entropy conservative surface flux. We will show in the next section that the additional flexibility to select different fluxes in the volume and at the surface allow the construction of an entropy conservative approximation for the shallow water equations that is also well-balanced.

\section{Entropy stable DGSEM for the 2D shallow water equations}\label{sec:ECDGSEM2D}

In this section, we construct the volume and surface fluxes, following the ideas of Tadmor, e.g. \cite{tadmor1984,tadmor2003}, that discretely conserve the entropy. We denote the entropy conservative approximation as the \emph{ECDGSEM}. Entropy conservation is only valid, however, for smooth solutions and not discontinuous ones (shocks). 
In Sec. \ref{sec:ent-stab}, we use the entropy conserving scheme as the starting point and add dissipation in a controlled way to guarantee that entropy is always dissipated at shocks, resulting in an entropy stable approximation denoted by \emph{ESDGSEM}.

\subsection{ECDGSEM on curvilinear meshes}\label{sec:ECCurvilinear}
The flux differencing formulation of the strong form DGSEM on curvilinear meshes \eqref{fluxDiffStrong} provides a compact notation for the ECDGSEM for which the previous results of Fisher and Carpenter \cite{fisher2013} for conservation and entropy conservation apply. To ensure that the approximation remains well-balanced we use the extra flexibility of the flux differencing form that allows us to select different entropy conservative volume and surface fluxes. The discretisation of the source term is also divided into volume and surface contributions. 
The surface parts depend on averages, $\average{\cdot}$, and jumps, $\jump{\cdot}$, across the interface. We denote the jumps for an arbitrary nodal quantity $\mathcal{W}$ consistent to notation \eqref{2DArrayStorage}: 
\begin{equation}
\begin{aligned}
\label{SourceJmpApproxMatrices}
&\jump{\mat{\mathcal{W}}}_\xi :=\begin{pmatrix}
\jump{\mathcal{W}}_\xi^{0,0} &  \cdots & \jump{\mathcal{W}}_\xi^{0,N} \\
0 & \cdots & 0 \\
\vdots &  & \vdots \\
0 & \cdots & 0 \\
\jump{\mathcal{W}}_\xi^{N,0} &  \cdots & \jump{\mathcal{W}}_\xi^{N,N}
\end{pmatrix},
&\jump{\mat{\mathcal{W}}}_\eta :=\begin{pmatrix}
\jump{\mathcal{W}}_\eta^{0,0} & 0 & \cdots & 0 & \jump{\mathcal{W}}_\eta^{0,N}\\
\vdots & \vdots &  & \vdots & \vdots\\
\jump{\mathcal{W}}_\eta^{N,0} & 0 & \cdots & 0 & \jump{\mathcal{W}}_\eta^{N,N}
\end{pmatrix},
\end{aligned}
\end{equation}
where we distinguish between jumps in $\xi$ direction, $\jump{\mathcal{W}}_\xi$, and $\eta$ direction, $\jump{\mathcal{W}}_\eta$. The averages of a nodal quantity are defined analogously. Whereas the local average operators are symmetric and hence don't prefer a direction, we define the local jumps according to the $\xi$ and $\eta$ coordinate directions: at the $\xi=-1$ and the $\eta=-1$ interfaces we compute the local jumps as the ``inner'' value minus the ``outer'' value, whereas for the $\xi=1$ and $\eta=1$ interfaces the local jumps are computed as the ``outer'' value minus the ``inner'' value. 

\begin{thm}[Curvilinear ECDGSEM]\label{thm:ECDGSEM}
The semi-discrete flux difference form of the two dimensional ECDGSEM for the shallow water equations on curvilinear grids
%
%
\begin{equation}\label{Eq:CurvilinearECDGSEM}
\mat{J}\circ(\mat{W}_k)_t + \diffmat\overline{\mat{\tilde{F}}}_k+ \overline{\mat{\tilde{G}}}_k\boldsymbol{\Delta}^T\mmat^{-1} = \mat{S}\left(\mat{\tilde{F}}_k^{*,ec}-\mat{\tilde{F}}_k\right) + \left(\mat{\tilde{G}}_k^{*,ec}-\mat{\tilde{G}}_k\right)\mat{S}+\mat{source}_k,\quad k =1,2,3,
\end{equation}
where for the flux differencing components \eqref{eq:fluxDiffForm} and \eqref{eq:fluxDiffFormG}, we use the entropy conserving volume fluxes 
\begin{equation}\label{volFlux2D} 
\begin{aligned}
\statevec{F}^{vol}(\statevec{W}^{i,j},\statevec{W}^{m,j}) &= \begin{pmatrix}
\average{hu}_{(i,m),j} \\[0.1cm]
\average{hu}_{(i,m),j}\average{u}_{(i,m),j} +g\average{h}_{(i,m),j}^2-\half g \average{h^2}_{(i,m),j} \\[0.1cm]
\average{hu}_{(i,m),j}\average{v}_{(i,m),j}
\end{pmatrix},\\
\\
\statevec{G}^{vol}(\statevec{W}^{i,j},\statevec{W}^{i,m}) &= \begin{pmatrix}
\average{hv}_{i,(j,m)} \\[0.1cm] 
\average{hv}_{i,(j,m)}\average{u}_{i,(j,m)} \\[0.1cm]
\average{hv}_{i,(j,m)}\average{v}_{i,(j,m)} +g\average{h}_{i,(j,m)}^2-\half g \average{h^2}_{i,(j,m)}
\end{pmatrix},
\end{aligned}
\end{equation}
in combination with the entropy conserving surface numerical fluxes
\begin{equation}\label{eq:surfaceFluxes}
\begin{aligned}
\statevec{F}^{*,ec}(\statevec{W}^{+},\statevec{W}^{-}) = \begin{pmatrix}
\average{h}\average{u}\\[0.1cm]
\average{h}\average{u}^2 + \frac{1}{2}\,g\,\average{h^2}\\[0.1cm]
\average{h}\average{u}\average{v}
\end{pmatrix},\\[0.2cm]
\statevec{G}^{*,ec}(\statevec{W}^{+},\statevec{W}^{-}) = \begin{pmatrix}
\average{h}\average{v}\\[0.1cm]
\average{h}\average{u}\average{v}\\[0.1cm]
\average{h}\average{v}^2 + \frac{1}{2}\,g\,\average{h^2}
\end{pmatrix},
\end{aligned}
\end{equation}
where in (\ref{eq:surfaceFluxes}) the $\average{\cdot}$ indicates the average of the two neighbouring states $\statevec{W}^+$ and $\statevec{W}^{\_}$ with the source term discretisation
\begin{equation}\label{eq:sourceTerm}
\resizebox{1.0\textwidth}{!}{$
\mat{source} = \begin{pmatrix}
0\\[0.1cm]
-\frac{g}{2}\hmat\circ\left[\mat{y}_{\eta}\circ\dmat\mat{b} + \dmat(\mat{y}_{\eta}\circ\mat{b}) - \mat{y}_\xi\circ\mat{b}\dmatT - (\mat{y}_\xi\circ\mat{b})\dmatT\right] + \frac{g}{2}\mmat^{-1}\left( \mat{y}_{\eta}\circ\average{\mat{h}}_{\xi}\circ\jump{\mat{b}}_{\xi} \right) - \frac{g}{2}\left( \mat{y}_{\xi}\circ\average{\mat{h}}_{\eta}\circ\jump{\mat{b}}_{\eta} \right)\mmat^{-1}\\[0.1cm]
-\frac{g}{2}\hmat\circ\left[-\mat{x}_{\eta}\circ\dmat\mat{b} - \dmat(\mat{x}_{\eta}\circ\mat{b}) + \mat{x}_\xi\circ\mat{b}\dmatT + (\mat{x}_\xi\circ\mat{b})\dmatT\right]-\frac{g}{2}\mmat^{-1}\left( \mat{x}_{\eta}\circ\average{\mat{h}}_{\xi}\circ\jump{\mat{b}}_{\xi} \right) + \frac{g}{2}\left( \mat{x}_{\xi}\circ\average{\mat{h}}_{\eta}\circ\jump{\mat{b}}_{\eta} \right)\mmat^{-1}
\end{pmatrix},$}
\end{equation}
has the following properties:
   \begin{enumerate}[label={\thethm.\arabic*}]
       \item Discrete conservation of the mass and discrete conservation of the momentum if the bottom topography is constant.\label{Thm2partone}
       \item Discrete conservation of the total energy, which is an entropy function for the shallow water equations. Hence it preserves the entropy of the system. \label{Thm2parttwo}
       \item Discrete well-balanced property for arbitrary bottom topographies. \label{Thm2partthree}
   \end{enumerate}
\end{thm}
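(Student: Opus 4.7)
My plan is to prove the three properties in order, exploiting the algebraic equivalence between the DGSEM and the flux differencing form \eqref{fluxDiffStrong}. The essential structural ingredients are: (i) the telescoping property of the subcell fluxes from Proposition \ref{prop1}; (ii) the SBP identity from Lemma \ref{Lem:Lemma1}; (iii) the discrete metric identities \eqref{metricIDsDiscrete}; and (iv) the defining jump condition that the EC volume and surface fluxes satisfy (a discrete analogue of Tadmor's condition). All three parts reduce to algebraic manipulations that make these four facts visible, with the source term discretisation \eqref{eq:sourceTerm} engineered so that the bathymetry contributions telescope into jumps of $b$ at element interfaces.

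For \ref{Thm2partone}, I would multiply \eqref{Eq:CurvilinearECDGSEM} by $\omega_i \omega_j$ and sum over all nodes $(i,j)$. Because $\bar{\tilde F}_k^{i+1,j}-\bar{\tilde F}_k^{i,j}$ and $\bar{\tilde G}_k^{i,j+1}-\bar{\tilde G}_k^{i,j}$ telescope and the complementary boundary values satisfy $\bar{\tilde F}_k^{0,j}=\tilde F_k^{0,j}$ and $\bar{\tilde F}_k^{N+1,j}=\tilde F_k^{N,j}$ (and analogously in $\eta$), the volume sum collapses to $\pm\tilde F_k$, $\pm\tilde G_k$ at the boundaries. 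These exactly cancel against the $-\tilde F_k,-\tilde G_k$ pieces produced by $\mat{S}$, leaving only the numerical interface fluxes $\tilde F_k^{*,ec},\tilde G_k^{*,ec}$ — the local conservation statement. For $k=1$ the source vanishes. For $k=2,3$ with constant $b$, every term of \eqref{eq:sourceTerm} is a sum of derivatives or jumps of $b$, all of which vanish.

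For \ref{Thm2parttwo} — the main obstacle of the proof — I would contract \eqref{Eq:CurvilinearECDGSEM} with the discrete entropy variables $\statevec Q^{i,j}=\partial e/\partial\statevec W\big|_{\statevec W^{i,j}}$ (so that the time term becomes $(d/dt)\,J^{i,j} e^{i,j}$ up to the chain rule), scale by $\omega_i\omega_j$, and sum. The indicial flux-differencing identity \eqref{eq:fluxDiffForm} lets me rewrite the $\xi$-volume contribution as a sum $2Q_{im}(\ldots)$; combining with the same summation along the $m$-index and using the symmetry of $\statevec F^{vol},\statevec G^{vol}$, the volume terms collapse to jump-type pairings $\jump{\statevec q}^T\big(\statevec F^{vol}\average{Ja^1_1}+\statevec G^{vol}\average{Ja^1_2}\big)$. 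Here I would verify directly that the fluxes \eqref{volFlux2D} satisfy the Tadmor jump relation $\jump{\statevec q}^T\statevec F^{vol}=\jump{\psi^x}$ with $\psi^x=\half g h^2 u + ghbu\cdot(\text{appropriate average})$, and analogously for $\statevec G^{vol}$; modulo the bathymetry piece this turns the volume terms into a telescoping sum of entropy-flux potentials plus a bathymetry residual. I then invoke the discrete metric identities \eqref{metricIDsDiscrete} to eliminate geometric cross-terms (this is where curvilinearity enters nontrivially), and show that the bathymetry residual from the volume is exactly cancelled by the ``volume'' pieces of $\mat{source}$ in \eqref{eq:sourceTerm}, while the interface pieces $\average{\mat h}_\xi\circ\jump{\mat b}_\xi$ and $\average{\mat h}_\eta\circ\jump{\mat b}_\eta$ cancel the remaining bathymetry jump produced when pairing entropy variables with the EC surface fluxes \eqref{eq:surfaceFluxes}. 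What remains after all cancellations is a pure telescoping sum of entropy-flux potentials at the four element edges, which is the local entropy conservation statement.

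For \ref{Thm2partthree} I would substitute the lake at rest state $u=v=0$, $h+b\equiv H$ directly. The continuity equation is trivially satisfied since every $\statevec F^{vol},\statevec G^{vol},\statevec F^{*,ec},\statevec G^{*,ec}$ for the first component contains a factor of $hu$ or $hv$. For the two momentum equations the velocity components in \eqref{volFlux2D} drop, so the only surviving volume contribution is $\tfrac{g}{2}\average{h^2}$ paired with the averaged metric terms; similarly only the $\tfrac g2\average{h^2}$ terms survive in \eqref{eq:surfaceFluxes}. Writing $\average{h^2}=\average{h}^2+\tfrac14\jump{h}^2$ and using $\jump{h}=-\jump{b}$ at interfaces together with $h^2=(H-b)^2$ inside each element, the volume and interface flux contributions rearrange into expressions in $h\,\partial b$ and $h\,\jump{b}$ that match, term by term and with opposite sign, the carefully constructed \eqref{eq:sourceTerm}. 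Again the discrete metric identities \eqref{metricIDsDiscrete} are needed to align the curvilinear pieces — this matching is the step I expect to be the most delicate and where the specific form of the source term was evidently reverse-engineered to work.
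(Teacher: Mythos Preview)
Your proposal is broadly correct and aligned with the paper. Part \ref{Thm2partone} is identical to the paper's argument (the paper simply invokes the telescoping property and cites Fisher--Carpenter). For Part \ref{Thm2parttwo} you actually propose more detail than the paper provides: in its appendix the paper only verifies the two-point Tadmor condition $\jump{\statevec q}^T\statevec F^{vol}=\jump{\psi^x}+g\average{hu}\jump{b}$ (and the $\statevec G^{vol}$ analogue) and then cites the Fisher--Carpenter framework for the passage to global entropy conservation, without explicitly tracing the source-term cancellation as you plan to. Your plan is sound and arguably makes explicit a step the paper leaves to the cited reference.

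For Part \ref{Thm2partthree} there is one slip and one methodological difference worth noting. The slip: when $u=v=0$ the surviving pressure part of $F_2^{vol}$ in \eqref{volFlux2D} is $g\average{h}^2-\tfrac g2\average{h^2}$, not just $\tfrac g2\average{h^2}$; the $g\average{h}^2$ term is essential, since it is what ultimately produces the $h\,\partial h$ pieces that combine with the source's $h\,\partial b$ to give $h\,\partial(h+b)=0$. The methodological difference: rather than work directly in the two-point form, the paper first expands the flux-differencing volume term into its equivalent split form via \eqref{ExtensionEq}, obtaining volume contributions like $\tfrac g2\bigl(\mat h\circ\dmat(\mat y_\eta\circ\mat h)+\mat y_\eta\circ\mat h\circ\dmat\mat h-\cdots\bigr)$, which pair transparently with the volume part of \eqref{eq:sourceTerm} after using $\dmat(\mat h+\mat b)=\mat 0$ and the metric identities \eqref{metricIDsDiscrete}. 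For the surface penalty the paper uses $\average{h^2}-h_i^2=\pm\average{h}\jump{h}$ rather than your $\average{h^2}=\average{h}^2+\tfrac14\jump{h}^2$; the former combines directly with the source's $\average{h}\jump{b}$ to yield $\average{h}\jump{h+b}=0$. Your identity is correct but less immediately adapted to the strong-form surface term $\tilde{\mat F}^{*,ec}-\tilde{\mat F}$, so the paper's route is algebraically cleaner at that step.
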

\begin{proof}
We prove the result in three parts.
\begin{proofpart}~~\ref{Thm2partone}: The discrete conservation of the numerical scheme follows directly from the telescoping flux differencing form of the approximation \cite{fisher2013}.
\end{proofpart}

\begin{proofpart}~~\ref{Thm2parttwo}: If the volume and surface fluxes are provably entropy conservative then the global entropy conservation of the approximation is retained by the flux differencing form \cite{fisher2013}. The surface fluxes \eqref{eq:surfaceFluxes} are known to be entropy conserving \cite{fjordholm2011}. We demonstrate in \ref{sec:ec_proof}  that the volume fluxes \eqref{volFlux2D} are also entropy conservative.
\end{proofpart}

\begin{proofpart}~~\ref{Thm2partthree}:  The complete proof of well-balancedness for the curvilinear ECDGSEM is provided in \ref{sec:wb_proof}. However, it is necessary to describe the construction of the bottom topography discretisations. For the two dimensional problem in general coordinates we require discrete approximations for $b_x$ and $b_y$, or more compactly, $\nabla b$. From \eqref{PhysicalDerivativeGeneral} we know the explicit form of the gradient in computational coordinates is
\begin{equation}\label{gradB}
J\nabla b = \left[y_{\eta}\pderivative{b}{\xi} - y_{\xi}\pderivative{b}{\eta}\;,\;-x_\eta\pderivative{b}{\xi} + x_\xi\pderivative{b}{\eta}\right]^T.
\end{equation}

We treat each piece of the source term as a quadratic split form \eqref{eq:two_split}. For example the first component from \eqref{gradB} is
\begin{equation}\label{eq:splitFormB}
b_x = \half\left(\pderivative{(y_{\eta}b)}{\xi} + y_{\eta}\pderivative{b}{\xi} +b\pderivative{y_{\eta}}{\xi} \right) -\half\left(\pderivative{(y_{\xi}b)}{\eta} + y_{\xi}\pderivative{b}{\eta} +b\pderivative{y_{\xi}}{\eta} \right).
\end{equation}
We then approximate the bottom contribution from the discrete approximation of the quadratic split form \eqref{eq:fullSplitForm}. So, we have the approximations for the derivative of the bottom topography
\begin{equation}\label{eq:botdiscretisation}
\begin{aligned}
b_x&\approx \half\left(\dmat(\mat{y}_{\eta}\circ\mat{b}) + \mat{y}_{\eta}\circ\dmat\mat{b} + \mat{b}\circ\dmat\mat{y}_\eta - (\mat{y}_\xi\circ\mat{b})\dmatT - \mat{y}_\xi\circ\mat{b}\dmatT - \mat{b}\circ\mat{y}_\xi\dmatT\right), \\[0.1cm]
b_y&\approx \half\left(-\dmat(\mat{x}_{\eta}\circ\mat{b})-\mat{x}_{\eta}\circ\dmat\mat{b}-\mat{b}\circ\dmat\mat{x}_{\eta} + (\mat{x}_{\xi}\circ\mat{b})\dmatT+ \mat{x}_{\xi}\circ\mat{b}\dmatT+ \mat{b}\circ\mat{x}_{\xi}\dmatT \right).
\end{aligned}
\end{equation}
The bottom topography discretisation could immediately be treated in a flux differencing way with the result \eqref{eq:fluxDiffForm}. However, we know that the approximation satisfies the metric identities \eqref{metricIDs}. So we cancel extraneous terms in \eqref{eq:botdiscretisation} and obtain a more computationally efficient form of the source term volume contributions in \eqref{Eq:CurvilinearECDGSEM}
\begin{equation}\label{eq:botdiscretisation2}
\begin{aligned}
b_x&\approx \half\left(\dmat(\mat{y}_{\eta}\circ\mat{b}) + \mat{y}_{\eta}\circ\dmat\mat{b} - (\mat{y}_\xi\circ\mat{b})\dmatT - \mat{y}_\xi\circ\mat{b}\dmatT\right), \\[0.1cm]
b_y&\approx \half\left(-\dmat(\mat{x}_{\eta}\circ\mat{b})-\mat{x}_{\eta}\circ\dmat\mat{b} + (\mat{x}_{\xi}\circ\mat{b})\dmatT+ \mat{x}_{\xi}\circ\mat{b}\dmatT \right).
\end{aligned}
\end{equation}
The surface contributions follow from a similar logic in that we require the derivative of $b$ at the boundary (in this case the jump). \end{proofpart}
\end{proof}

\begin{rem}[One-dimensional bottom formulation]
In one space dimension we use the following approximation of the source term for an element $G$
\begin{equation}
\label{JumpApprox1D}
g\,h\,b_x \approx g\,\mat{h}\,\dmat\,\statevec{b} + \half g \average{h}^G_{\xi=-1} \jump{b}^G_{\xi=-1} \frac{1}{\omega_0} \statevec{e}_1 + \half g \average{h}^G_{\xi=+1} \jump{b}^G_{\xi=+1} \frac{1}{\omega_N} \statevec{e}_{N+1}.
\end{equation}
Here $\average{h}^G_{\xi=-1}$ is the average water height at the left interface node of element $G$ and the right interface node of element $G-1$, $\average{h}^G_{\xi=+1}$ is the average water height at the right interface node of element $G$ and the left interface node of element $G+1$:
\begin{equation}
\begin{aligned}
&\average{h}_{\xi=-1}^G = \half(h_0^G + h_N^{G-1}),\\
&\average{h}_{\xi=+1}^G = \half(h_N^G + h_0^{G+1}).
\end{aligned}
\end{equation}
In the same fashion the jump in bottom topography is defined as
\begin{equation}
\begin{aligned}
&\jump{b}_{\xi=-1}^G = b_0^G - b_N^{G-1},\\
&\jump{b}_{\xi=+1}^G = b_0^{G+1}-b_N^G .
\end{aligned}
\end{equation}
The quadrature weights are denoted by $\omega_0$ and $\omega_N$ and the $N+1$ dimensional unit vectors are denoted by $\statevec{e}_1$ and $\statevec{e}_{N+1}$.
\end{rem}

\subsection{Entropy stable DGSEM}\label{sec:ent-stab}
The ECDGSEM presented so far exactly conserves the discrete entropy. However, the solution of the shallow water equations may develop discontinuities (shocks) in finite time even for smooth initial data. We know in the presence of discontinuities that the conservation law for the entropy function \eqref{EnergyEquation2D} must be replaced by the entropy inequality \eqref{EnergyInequality2D} \cite{tadmor1984}. Thus, we must add numerical dissipation to the ECDGSEM so that the entropy function is guaranteed to be dissipated (or conserved for smooth well resolved solutions), thereby ensuring that a discrete version of the entropy inequality holds. 

A typical way to add dissipation in a discontinuous Galerkin approximation is via the definition of the numerical flux function. Most often, those numerical flux functions are (approximate) Riemann solvers that inherently create dissipation at shocks (or where the solution is otherwise underresolved). We follow this basic idea and add dissipation in the spirit of Riemann solvers at the element interfaces to add dissipation to the ECDGSEM. 

To derive the dissipative numerical flux, we note that the physical fluxes \eqref{eq:physicalFluxes} have the associated flux Jacobians
\begin{equation}
\label{fJacobi}
\mat{A}_f=\statevec{f}_{\statevec{w}} =\begin{pmatrix}
0 & 1 & 0\\
gh-u^2 & 2u & 0 \\
-uv& v & u
\end{pmatrix},
\end{equation}
and
\begin{equation}
\label{gJacobi}
\mat{A}_g=\statevec{g}_{\statevec{w}} =\begin{pmatrix}
0 & 0 & 1\\
-uv & v & u \\
gh-v^2& 0 & 2v
\end{pmatrix}.
\end{equation}
The eigenvalues of \eqref{fJacobi} are $u+c$, $u$, $u-c$ and for \eqref{gJacobi} $v+c$, $v$, $v-c$ with the wave speed $c=\sqrt{gh}$. The matrices of eigenvectors of \eqref{fJacobi} and \eqref{gJacobi} are
\begin{equation}
\mat{R}_f =\begin{pmatrix}
1 & 0 & 1\\
u+c & 0 & u-c \\
v& 1 & v
\end{pmatrix},
\end{equation}
and
\begin{equation}
\mat{R}_g =\begin{pmatrix}
1 & 0 & 1\\
u & 1 & u \\
v+c& 0 & v-c
\end{pmatrix},
\end{equation}
respectively.

The dissipation term will also require the entropy Jacobian matrix $\mat{H}=\statevec{q}_{\statevec{w}}$. To obtain $\mat{H}$, we now express the conservative variables $\statevec{w}=(h,hu,hv)^T$ in terms of the entropy variables $\statevec{q}=(g(h+b) - \half u^2 - \half v^2,u,v)^T$ :
\begin{equation}
\begin{aligned}
\label{ConservativeByEntropy}
&w_1=\frac{1}{g}q_1-b+\frac{1}{2g}(q_2^2+q_3^2), \\
&w_2=\frac{1}{g}q_1 q_2-b q_2+\frac{1}{2g}(q_2^3+q_2 q_3^2), \\
&w_3=\frac{1}{g}q_1 q_3-b q_3+\frac{1}{2g}(q_2^2 q_3+q_3^3) .
\end{aligned}
\end{equation}
Differentiating \eqref{ConservativeByEntropy} directly leads to the entropy Jacobian matrix
\begin{equation}
\mat{H} =\frac{1}{g}\begin{pmatrix}
1 & u & v\\
u & gh+u^2 & uv \\
v& uv & gh+v^2
\end{pmatrix}.
\end{equation}

With an appropriate scaling for the right eigenvectors we obtain the set of entropy scaled eigenvectors \cite{merriam1989}
\begin{equation}
\mat{H} = (\mat{R}\mat{T})(\mat{R}\mat{T})^T,
\end{equation}
which relates the right eigenvectors to the entropy Jacobian. For the scaling, we consider the matrix
\begin{equation}
\mat{T}=\textrm{diag}(\sqrt{s_1},\sqrt{s_2},\sqrt{s_3}),
\end{equation}
 with scaling parameters on the diagonal only. We define $\zmat=\mat{T}^2$ and have the identity in a new form
\begin{equation}
\mat{H} = \mat{R}\zmat\mat{R}^T.
\end{equation}
For the eigenvectors of the $f$ flux Jacobian, $\mat{R}_f$, we find
\begin{equation}
\begin{aligned}
&s_1=\frac{1}{2g}, \quad
&s_2=h, \quad
&s_3=\frac{1}{2g} .
\end{aligned}
\end{equation}
A straightforward calculation shows that the same scaling works for the eigenvectors of the flux Jacobian in the $y-$direction as well.

Now we have all the necessary components to define the entropy stable numerical flux functions. We subtract the dissipation term required for dissipation in the $x-$direction
\begin{equation}
\label{fStabilized}
\statevec{F}^{*,es} =\statevec{F}^{*,ec} - \half \mat{R}_f  \, \big|\mat{\Lambda}_f \big|\, \zmat \, \mat{R}_f^T \jump{\,\statevec{q}\,},
\end{equation}
and the $y-$direction
\begin{equation}
\label{gStabilized}
\statevec{G}^{*,es} =\statevec{G}^{*,ec} - \half \mat{R}_g \, \big|\mat{\Lambda}_g \big|\, \zmat\, \mat{R}_g^T \jump{\,\statevec{q}\,},
\end{equation}
where $\mat{\Lambda}_f$ and $\mat{\Lambda}_g$ are the diagonal matrices containing the eigenvalues previously computed. We use the arithmetic average values at an element interfaces to compute the right eigenvector, scaling, and eigenvalue matrices in \eqref{fStabilized} and \eqref{gStabilized}. 

It is important that the dissipation terms depend on the jumps of the entropy variables and not on the jump of the conserved quantities as would be common in standard Riemann solver-based numerical flux functions. If we compute the discrete entropy equation by multiplying the scheme with the entropy variables, we get contributions of the form $ -\half\,\jump{\,\statevec{q}\,}\cdot \mat{R}_f  \, \big|\mat{\Lambda}_f \big|\, \zmat \, \mat{R}_f^T \jump{\,\statevec{q}\,}$ at each interface that are guaranteed to be negative due to the positivity of the matrix $\mat{R}_f \, \big|\mat{\Lambda}_f \big|\, \zmat \, \mat{R}_f^T$. Writing in terms of the jumps in the entropy variables ensures that entropy is dissipated when the jump in entropy variables across interfaces is large (e.g. shocks) and is nearly preserved when the jumps are small for well resolved smooth solutions.

We finally present the main contribution of the present work, an an entropy stable DGSEM (ESDGSEM) for the shallow water equations.
\begin{thm}[Curvilinear ESDGSEM]\label{thm:ESDGSEM}
The semi-discrete form of the two dimensional ESDGSEM formulation for the shallow water equations on curvilinear grids is given by
%
%
\begin{equation}\label{Eq:CurvilinearESDGSEM}
\mat{J}\circ(\mat{W}_k)_t + \diffmat\overline{\mat{\tilde{F}}}_k+ \overline{\mat{\tilde{G}}}_k\boldsymbol{\Delta}^T\mmat^{-1} = \mat{S}\left(\mat{\tilde{F}}_k^{*,es}-\mat{\tilde{F}}_k\right) + \left(\mat{\tilde{G}}_k^{*,es}-\mat{\tilde{G}}_k\right)\mat{S}+\mat{source}_k,\quad k =1,2,3,
\end{equation}
where the flux differencing components use the volume fluxes \eqref{volFlux2D} in combination with the entropy stable numerical fluxes \eqref{fStabilized} and \eqref{gStabilized} and the source term discretisation \eqref{eq:sourceTerm}. The approximation has the following properties:
   \begin{enumerate}[label={\thethm.\arabic*}]
       \item Discrete conservation of the mass and discrete conservation of the momentum if the bottom topography is constant.
       \item Discrete entropy stability.
       \item The well-balanced property for arbitrary bottom topographies.
   \end{enumerate}
\end{thm}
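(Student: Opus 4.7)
The plan is to leverage Theorem \ref{thm:ECDGSEM} directly and show that the only modification, namely the replacement of the entropy conserving surface fluxes $\statevec{F}^{*,ec},\statevec{G}^{*,ec}$ by $\statevec{F}^{*,es},\statevec{G}^{*,es}$ from \eqref{fStabilized}--\eqref{gStabilized}, preserves each of the three properties. Since the source term and the volume flux differencing are identical to those in \eqref{Eq:CurvilinearECDGSEM}, all work is localised to the interface contributions. I would write the difference of the ESDGSEM and the ECDGSEM right-hand sides for each interface as $-\half \mat{R}|\mat{\Lambda}|\zmat\mat{R}^T\jump{\statevec{q}}$, a consistent two-point numerical dissipation term, and treat it as the quantity to analyse in all three parts.

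For Part 1, I would observe that the dissipation term depends only on the shared interface state pair $(\statevec{W}^+,\statevec{W}^-)$ and is antisymmetric in the sense that the two adjacent elements see jumps of opposite sign (due to the orientation convention after \eqref{SourceJmpApproxMatrices}). Therefore, when summing the semi-discrete scheme over all elements to test local conservation via the telescoping flux-difference structure, all interior interface dissipation contributions cancel pairwise. Combined with the conservation already proved in \ref{Thm2partone} for the ECDGSEM, this yields the first property. For Part 2, I would contract the scheme with the entropy variables $\statevec{q}$ and the mass matrix and sum over all nodes and elements. By Part \ref{Thm2parttwo} of Theorem \ref{thm:ECDGSEM}, the entropy contribution of the ECDGSEM portion vanishes (or produces the correct entropy flux telescoping), so the only new contribution comes from the dissipation term and equals
\begin{equation}
-\half\sum_{\text{interfaces}} \jump{\statevec{q}}^{\,T}\,\mat{R}\,|\mat{\Lambda}|\,\zmat\,\mat{R}^T\,\jump{\statevec{q}},
\end{equation}
which is nonpositive because $\mat{R}|\mat{\Lambda}|\zmat\mat{R}^T$ is symmetric positive semi-definite (it is the entropy-scaled absolute flux Jacobian constructed from the identity $\mat{H}=\mat{R}\zmat\mat{R}^T$ combined with $|\mat{\Lambda}|\ge 0$). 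This is precisely the discrete entropy inequality.

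Part 3 is the easiest given the structure of the entropy variables. At the lake-at-rest state \eqref{lakeAtRest} one has $u=v=0$ and $h+b=\text{const}$, so from \eqref{entropyVars} the entropy variables reduce to $\statevec{q}=(g(h+b),0,0)^T$, which is constant across every interface. Therefore $\jump{\statevec{q}}\equiv 0$ and the dissipation terms in \eqref{fStabilized}--\eqref{gStabilized} vanish identically, so $\statevec{F}^{*,es}=\statevec{F}^{*,ec}$ and $\statevec{G}^{*,es}=\statevec{G}^{*,ec}$ at the lake-at-rest state. The well-balanced property then follows immediately from \ref{Thm2partthree} of Theorem \ref{thm:ECDGSEM}. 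The only step with any subtlety is Part 2: one must be careful that the sign convention for $\jump{\cdot}$ on adjacent elements combines correctly so that each interior interface contributes a single nonpositive quadratic form rather than two terms that might partially cancel; this is exactly why the dissipation is written in terms of jumps of $\statevec{q}$ (not of $\statevec{w}$) and why the scaled eigenvector decomposition $\mat{H}=\mat{R}\zmat\mat{R}^T$ is essential, so this will be the main technical check in the full proof.
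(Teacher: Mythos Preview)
Your approach is essentially the same as the paper's: leverage Theorem~\ref{thm:ECDGSEM} and analyse only the added dissipation term. The paper's proof is in fact even terser than yours---it simply states that the ESDGSEM follows from the ECDGSEM by replacing the EC surface fluxes with the ES ones, and for well-balancedness observes that $\jump{\statevec{q}}=0$ at the lake-at-rest state so the dissipation vanishes. Your Parts~2 and~3 match this exactly, and your Part~2 spells out the quadratic-form argument that the paper only sketches in the text preceding the theorem.

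One small correction to your Part~1: the orientation convention after \eqref{SourceJmpApproxMatrices} is set up precisely so that the two adjacent elements see the \emph{same} jump $\jump{\statevec{q}}$ at a shared interface, not jumps of opposite sign. This is what makes $\statevec{F}^{*,es}$ a single-valued numerical flux, and it is the single-valuedness (combined with the opposite signs coming from the surface matrix $\mat{S}$ on the two sides) that produces the pairwise cancellation you want. If the jumps actually had opposite signs, the dissipation term would differ on the two sides and conservation would fail. The conclusion is correct; only the stated mechanism needs this fix.
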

\begin{proof}
The ESDGSEM follows directly from the curvilinear ECDGSEM presented in Thm.~\ref{thm:ECDGSEM}. To guarantee entropy stability we replace the entropy conserving numerical fluxes \eqref{eq:surfaceFluxes} at element interfaces with the entropy stable numerical fluxes \eqref{fStabilized}, \eqref{gStabilized}. For the ``lake at rest'' initial conditions the jump in entropy variables is zero, $\jump{\statevec{q}\,}=0$, so the additional dissipation term vanishes and does not affect the well-balanced property of the scheme.
\end{proof}

\section{Numerical results}\label{NumResults}

In this section, we use the entropy conserving \eqref{Eq:CurvilinearECDGSEM} and entropy stable \eqref{Eq:CurvilinearESDGSEM} numerical schemes on several test cases to numerically verify the theoretical findings from Thms.~\ref{thm:ECDGSEM} and \ref{thm:ESDGSEM}. To integrate the systems in time we use the five stage, fourth order Runge-Kutta time integrator of Carpenter and Kennedy \cite{Carpenter&Kennedy:1994}. First, to verify the convergence, conservation and well-balancedness of the approximations we use a structured curvilinear mesh depicted in Figure \ref{fig:PeriodicMesh}. Elements are numbered by counting from left to right and bottom to top. We also simulate a bore-shear interaction as well as the numerical generation of potential vorticity generated from the passage of a non-uniform bore. After all theoretical findings are verified numerically, we present a simulated partial dam break from a parabolic dam with a discontinuous bottom topography in the downstream region of the flow. The partial dam break problem serves to exercise each component of the ESDGSEM approximation.  
\begin{figure}[!ht]
\begin{center}
  \makebox[\textwidth]{\input{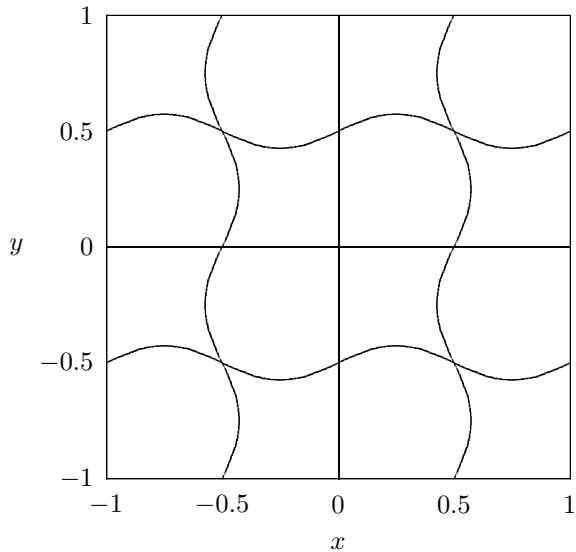}}
\caption{The curvilinear mesh used for verification of convergence, conservation, and well-balancedness.}
\label{fig:PeriodicMesh}
\end{center}
\end{figure}
\subsection{Convergence}\label{sec:convergence}
\label{sec:convTest}
We first test the convergence of the entropy conserving and entropy stable approximations with a smooth solution test problem. We use the method of manufactured solutions to create an analytic solution
\begin{equation}\label{eq:manufacturedSol}
\begin{aligned}
&H(x,y,t)=h(x,y,t)+b_1(x,y)=8+\cos(x)\sin(y)\cos(t),\\
&u(x,y,t)=0.5,\\
&v(x,y,t)=1.5 .
\end{aligned}
\end{equation}
The manufactured solution \eqref{eq:manufacturedSol} introduces additional source terms to the equations of the form
\begin{equation}
\begin{aligned}
&s_1:=H_t+u(H_x-(b_1)_x)+v(H_y-(b_1)_y),\\
&s_2:=uH_t+u^2(H_x-(b_1)_x)+uv(H_y-(b_1)_y)+H_x(H-b_1),\\
&s_3:=vH_t+uv(H_x-(b_1)_x)+v^2(H_y-(b_1)_y)+H_y(H-b_1) ,
\end{aligned}
\end{equation}
where derivatives regarding $H$ and $b_1$ are computed analytically. We solve this problem on the domain $[-1,1]^2$ with the smooth bottom topography
\begin{equation}
\label{ConvBottom}
b_1(x,y)=2+0.5\sin\left(2 \pi x\right)+0.5\cos\left(2 \pi y\right).
\end{equation}
The gravitational constant is set to $g=1$. 

We vary the polynomial degree on the mesh given in Fig. \ref{fig:PeriodicMesh} and observe exponential convergence up to $N=16$ ($N=15$ for ESDGSEM) for $\Delta t=1/2000$ and $N=17$ ($N=16$ for ESDGSEM) for $\Delta t=1/4000$, when the errors introduced by the time integrator become dominant. We present semi-log plots in Fig. \ref{fig:SWSpectral_EC} for the entropy conserving scheme and Fig. \ref{fig:SWSpectral_ES} for the entropy stable scheme. As previously observed, e.g. \cite{skew_sbp2,gassner_skew_burgers,gassner_kepdg,gassner2015}, we find a suboptimal order of convergence for odd polynomial degree $N$ for the purely entropy conserving scheme. However, both the ECDGSEM and ESDGSEM are spectrally accurate for smooth problems.
\begin{figure}[!ht]
   \centering
    {
        \includegraphics[scale=0.65,trim=0 0 0 0, clip]{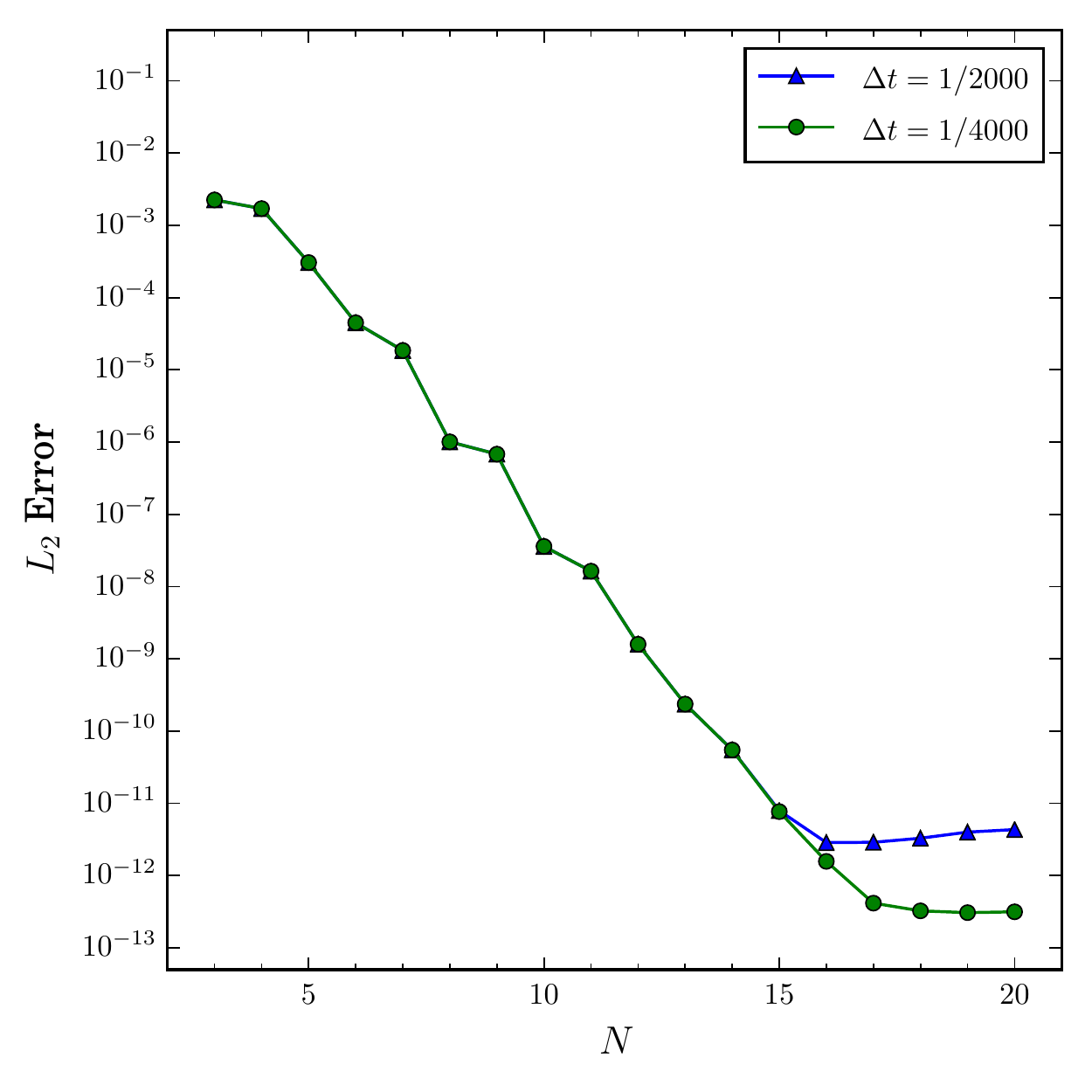}
    }
    \caption{Semi-log plot shows the spectral convergence in space and fourth order accuracy in time for of the ECDGSEM scheme applied to a smooth solution.}
    \label{fig:SWSpectral_EC}
\end{figure}
\begin{figure}[!ht]
   \centering
    {
        \includegraphics[scale=0.65,trim=0 0 0 0, clip]{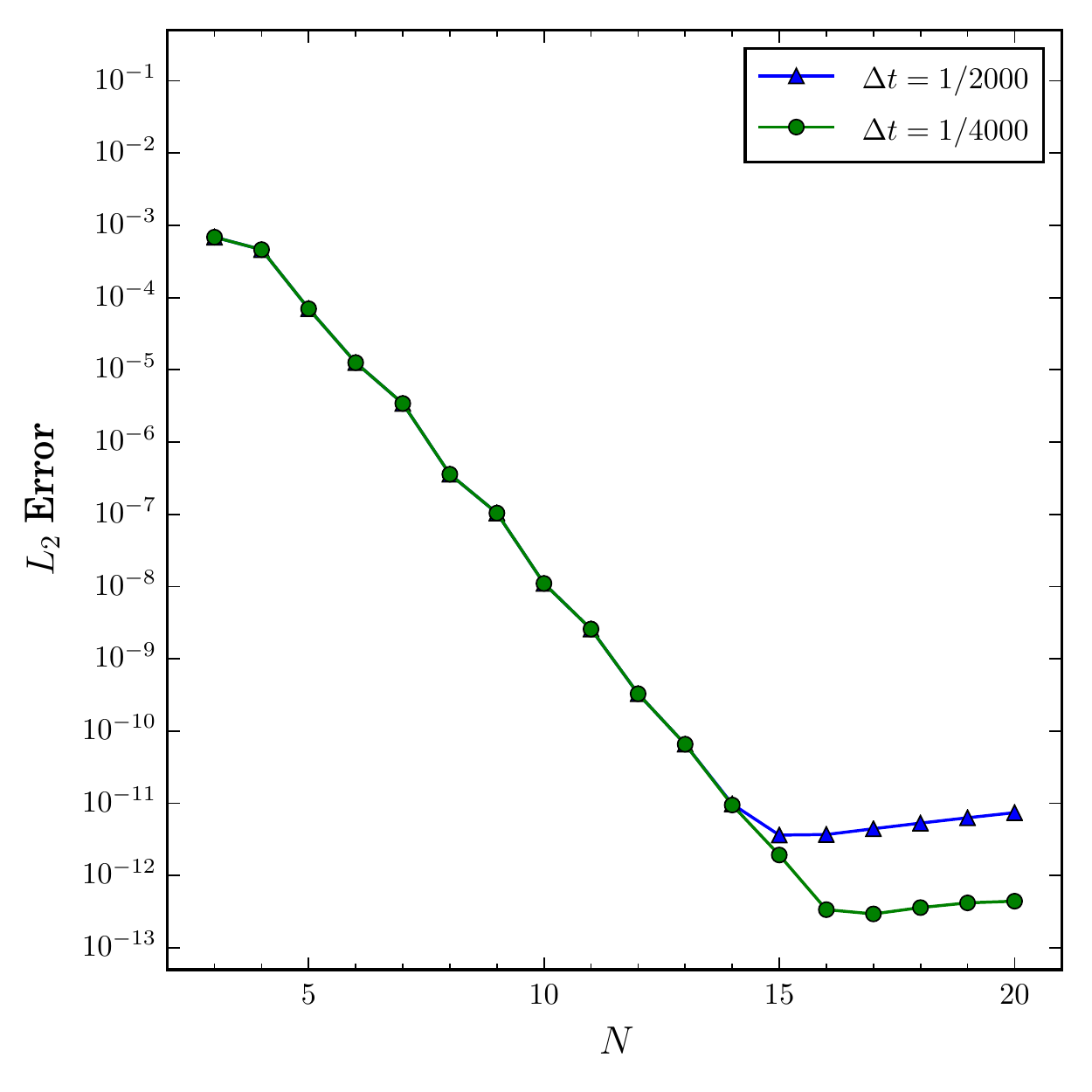}
    }
    \caption{Semi-log plot shows the spectral convergence in space and fourth order accuracy in time for the ESDGSEM scheme applied to a smooth solution.}
    \label{fig:SWSpectral_ES}
\end{figure}
\subsection{Conservation of mass and momentum}
We first numerically verify that mass and momentum are conserved, Property~\ref{thm:ECDGSEM}.1 of Thm.~\ref{thm:ECDGSEM} using a constant bottom topography. Additionally, with the specific numerical volume fluxes \eqref{volFlux2D} and surface fluxes \eqref{eq:surfaceFluxes} the approximation will conserve the total energy (modulo dissipative effects of the time integrator). Also, for a non-constant bottom topography the momentum equations become balance laws, and we show that mass and entropy are still conserved discretely, even for discontinuous bottom topographies. 

The problem that we choose to model is a dam break on the domain $\Omega=[-1,1]^2$. The dam break is initialised along the vertical line $x=0$ on the curvilinear mesh shown in Fig. \ref{fig:PeriodicMesh} with periodic boundary conditions and a polynomial degree of $N=5$. The gravitational constant is again set to $g=1$. The dam break problem uses the initial conditions
\begin{equation}
\label{eq:dambreakCons}
h(x,y,0) = \left\{
\begin{aligned}
&5-b(x,y) \quad\textrm{if }x < 0 \\
&4-b(x,y) \quad\textrm{if }x > 0
\end{aligned}
\right. ,
\quad u(x,y,0) = v(x,y,0) = 0.
\end{equation}
\subsubsection{Dam break over a flat bottom}
We demonstrate the entropy conservative properties of the ECDGSEM scheme, Property~\ref{thm:ECDGSEM}.2 of Thm.~\ref{thm:ECDGSEM}. We consider a flat bottom topography, $b\equiv 0$, and solve the dam break on the curvilinear mesh Fig. \ref{fig:PeriodicMesh}. The differences in mass, momentum, and total energy are listed in Table \ref{tab:ConservationErrors1}. The error in the discrete energy reflects the dissipative influence of the time integrator. Otherwise, we see that the conservation in mass and momentum is on the order of machine precision for each time step value considered. By shrinking the time step we see that we can drive the error in the discrete total energy to machine precision, converging at the fourth order accuracy of the time integrator. 
\begin{table}[!ht]
\begin{center}
\begin{tabular}{ccccccc}
\toprule
$\Delta t$  & $\Delta$Mass & $\Delta$MomentumX & $\Delta$MomentumY & $\Delta$Energy & Temporal Order \\
\toprule
1/1000   & 3.55E-14 & 2.66E-13 & 4.32E-17 & 4.79E-08 & -- \\\midrule
1/2000  & 2.49E-14 & 2.66E-13 & 9.95E-16 & 3.01E-09 & 3.99 \\\midrule
1/4000  & 3.20E-14 & 2.66E-13 & 1.71E-15 & 1.89E-10 & 3.99 \\\midrule
1/8000  & 3.20E-14 & 2.66E-13 & 1.46E-15 & 1.18E-11 & 4.00\\
\bottomrule
\end{tabular}
\end{center}
\caption{Errors in the conserved quantities, mass, momentum, and total energy, at $T=1$ over a constant bottom topography. In the energy conservation results we observe the temporal accuracy of the time integrator.}
\label{tab:ConservationErrors1}
\end{table}%

\subsubsection{Dam break over a discontinuous bump}
Next we examine the conservative properties of the numerical scheme for a discontinuous bottom topography. Momentum will no longer be conserved. But the mass should be conserved and the error in the total energy should reduce as the time step is refined. These properties are demonstrated in the numerical test presented in Table \ref{tab:ConservationErrors2}. 
For the discontinuous bottom topography we used
\begin{equation}
\label{eq:discBottomBox}
b_2(x,y) = \left\{
\begin{aligned}
&2+0.5\sin\left(2 \pi x\right)+0.5\cos\left(2 \pi y\right), \quad\textrm{if $element_{ID}$} = 6 \\
&0, \quad\textrm{otherwise}
\end{aligned}
\right.,
\end{equation}
which is the bottom topography \eqref{ConvBottom} restricted to a single element.
If we use the entropy conserving scheme without added stabilisation and periodic boundaries, we expect the entropy (total energy) to be conserved in the scheme. Table \ref{tab:ConservationErrors2} shows that the error in the total energy shrinks with the fourth order accuracy of the time integrator as the time step is refined. Also, we see that mass is conserved to machine precision for all time steps. 
\begin{table}[!ht]
\begin{center}
\begin{tabular}{cccc}
\toprule
$\Delta t$  & $\Delta$Mass & $\Delta$Energy & Temporal Order\\
\toprule
1/1000  & 5.33E-14 & 2.16E-08 & -- \\\midrule
1/2000  & 1.78E-14 & 1.35E-09 & 4.00\\\midrule
1/4000  & 2.84E-14 & 8.48E-11 & 3.99 \\\midrule
1/8000  & 3.55E-15 & 5.32E-12 & 3.99 \\
\bottomrule
\end{tabular}
\end{center}
\caption{Errors in the mass and total energy at $T=1$ for the discontinuous bottom topography \eqref{eq:discBottomBox}. In the energy conservation results we observe the temporal accuracy of the time integrator.}
\label{tab:ConservationErrors2}
\end{table}%
\subsection{Well-balancedness over a discontinuous bottom}\label{sec:wellBalance}
Next we demonstrate numerically that the curvilinear entropy conserving numerical scheme \eqref{Eq:CurvilinearECDGSEM} is well-balanced, numerically demonstrating Property~\ref{thm:ECDGSEM}.3 of Thm.~\ref{thm:ECDGSEM}. We focus, particularly, on a discontinuous bottom topography. So, we configure a ``lake at rest'' test problem as in \eqref{lakeAtRest}
\begin{equation}
\begin{aligned}
&h+b_2(x,y)=5,\\
&u=v=0,
\end{aligned}
\end{equation}
with the discontinuous bottom topography \eqref{eq:discBottomBox}. The boundary conditions are set to be periodic. We use the curvilinear mesh in Fig. \ref{fig:PeriodicMesh} on the domain $\Omega=[-1,1]^2$ and vary the polynomial degree $N$. The time step is fixed at $\Delta t = 1/1000$. Table \ref{tab:L2ErrorsLakeAtRest} shows that the $L_2$-error of the approximate total water height, $H=h+b_2$, is of the magnitude of round-off errors for both the ECDGSEM and the ESDGSEM.
\begin{table}[!ht]
\begin{center}
\begin{tabular}{ccc}
\toprule
N & $L_2$-error ECDGSEM & $L_2$-error ESDGSEM  \\
\toprule
3 & 8.84E-15 & 5.37E-15 \\\midrule
4 & 8.75E-15 & 5.02E-15 \\\midrule
5 & 1.85E-14 & 1.55E-14 \\
\bottomrule
\end{tabular}
\end{center}
\caption{$L_2$-error of the approximate total water height, $H=h+b$, to the ``lake at rest'' problem on the curvilinear mesh shown in Fig. \ref{fig:PeriodicMesh} at $T=1$.}
\label{tab:L2ErrorsLakeAtRest}
\end{table}%

\subsection{Dam break over a discontinuous bump}\label{sec:breakOverBump}
Next we compute the solution of a dam break problem using both the ECDGSEM and the ESDGSEM. This numerical example demonstrates that the entropy stable approximation removes spurious oscillations in the post-shock regions of the flow introduced by the entropy conservative approximation. For the numerical test we set inflow/outlow Dirichlet boundaries along the vertical lines $x = 0$ and $x=10$ and periodic boundaries along the horizontal lines $y=0$ and $y=10$ on the domain $\Omega = [0,10]^2$. We use a rectangular mesh with sizes varying from $20\times 20$, $40\times 40$, $80\times 80$ to $160\times 160$ elements and polynomial degree of $N=4$. The gravitational constant is again set to $g=1$. The initial conditions are
\begin{equation}
\label{eq:dambreakDisc}
h(x,y,0) = \left\{
\begin{aligned}
&3.5-b_3(x,y) \quad\textrm{if }x < 5 \\
&2.5-b_3(x,y) \quad\textrm{if }x > 5
\end{aligned}
\right. ,
\quad u(x,y,0) = v(x,y,0) = 0,
\end{equation}
so we initialise the problem with a height discontinuity on the element interfaces at $x=5.0$.
As a bottom topography we use
\begin{equation}
\label{eq:discBottomBox2}
b_3(x,y) = \left\{
\begin{aligned}
&2.0-(x-5)^2-(y-5)^2, \quad\textrm{if }|x-5| < 1 \textrm{ and } |y-5| < 1 \\
&0, \quad\textrm{otherwise}
\end{aligned}
\right.,
\end{equation}
which is a box with a smooth top that has its center at $(5.0,5.0),$ side lengths of $2,$ and is initialised discontinuously along the edges of the box, which align with cell interfaces.

The results shown in Fig. \ref{fig:DamBreakOverDiscBumpEC} for the purely conserving scheme show that it produces severe ringing in the post-shock region of the approximation. The entropy stable approximation removes these spurious oscillations except near the discontinuity at the shock front. We show the computed entropy stable solution in Fig. \ref{fig:DamBreakOverDiscBumpES}, where we present a grid refinement study for the entropy stable approximation. It is clear that the additional dissipation smoothes the spurious oscillations and the ESDGSEM provides a more physical solution to the dam break problem over the discontinuous bump. Nevertheless, we note that, although stable, the ESDGSEM is not completely oscillation-free \cite{fjordholm2011}.
\begin{figure}[!ht]
   \centering
    {
        \includegraphics[scale=0.25,trim=0 10 0 10, clip]{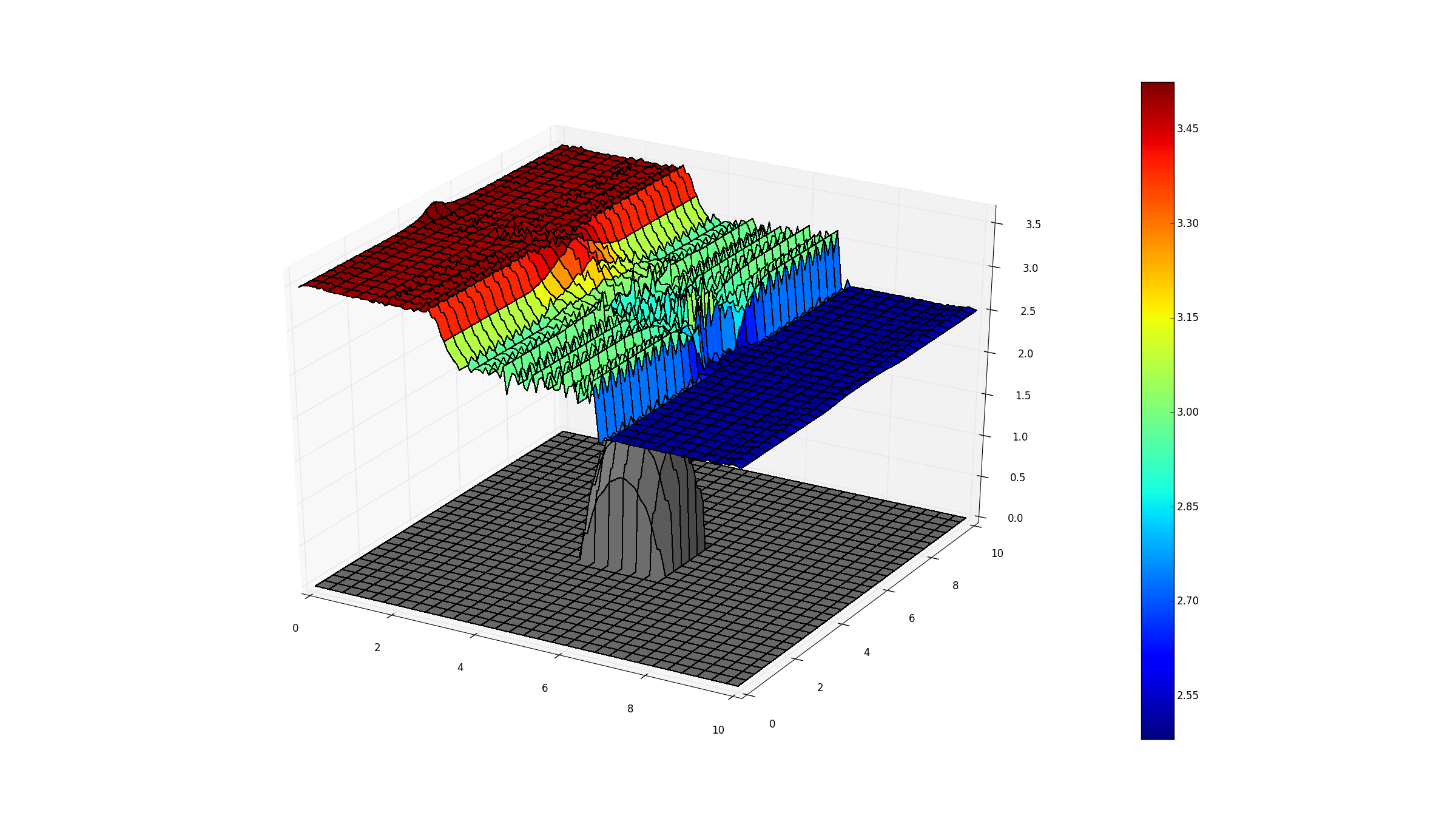}
    }
    \caption{ECDGSEM, dam break over a discontinuous bump on $40\times40$ elements at $T=1$ and $CFL=0.1$.}
    \label{fig:DamBreakOverDiscBumpEC}
\end{figure}

\begin{figure}[!ht]
   \centering
    \subfloat[$20\times 20$]
    {
        \includegraphics[scale=0.1725,trim=250 80 250 80, clip]{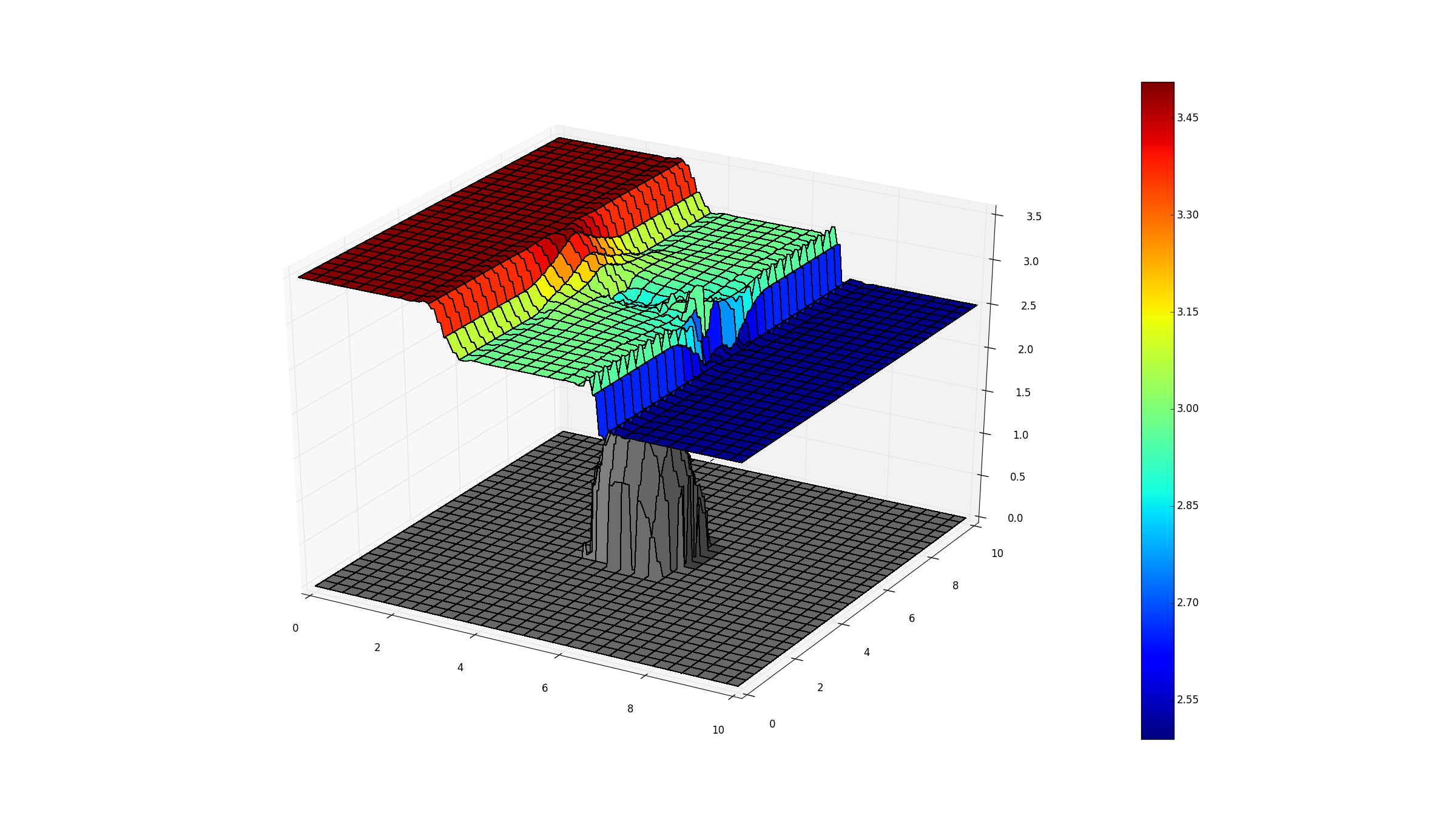}
    }
\subfloat[$40\times 40$]
    {
        \includegraphics[scale=0.1725,trim=250 80 250 80, clip]{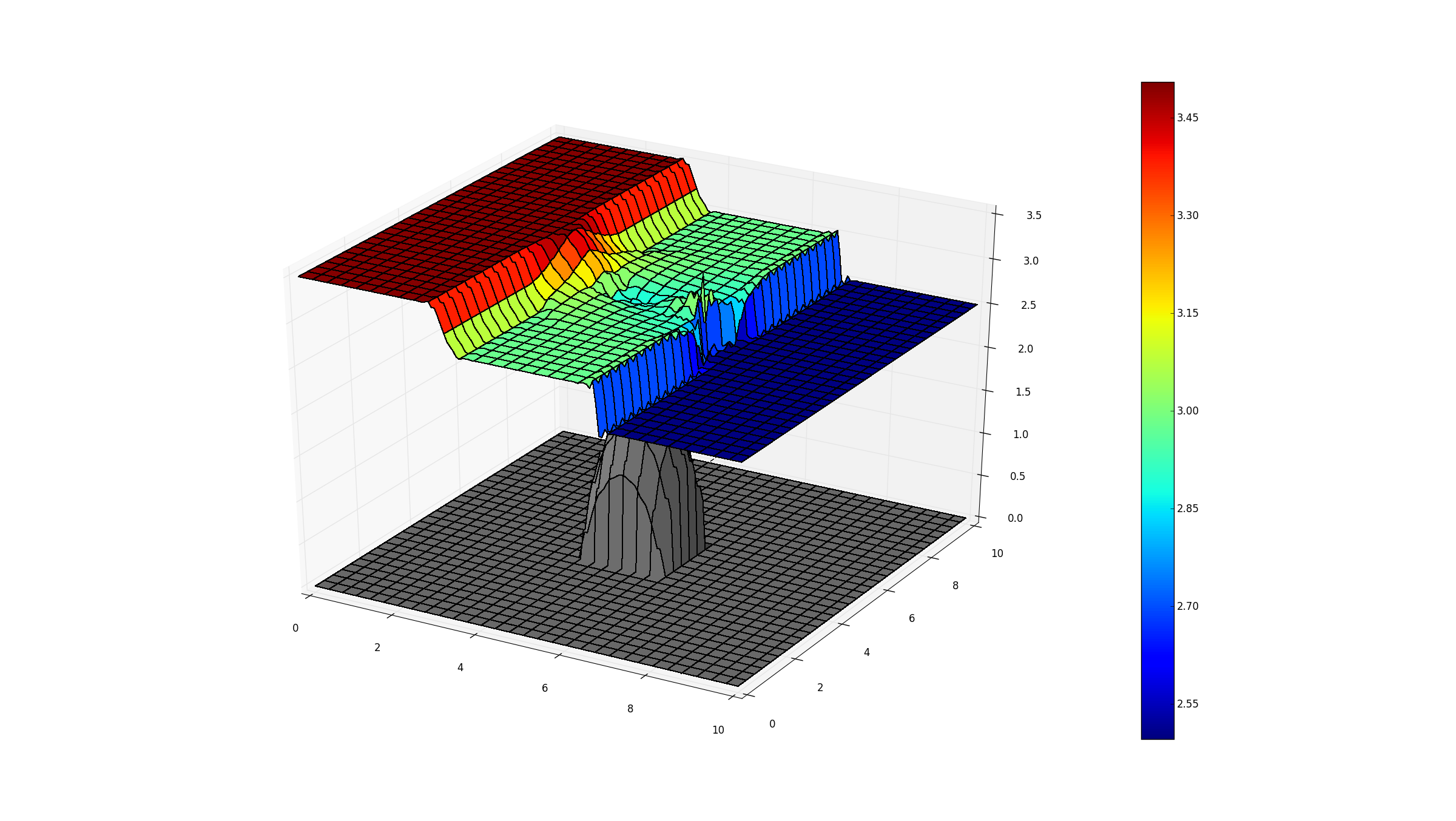}
    }
    \\
\subfloat[$80\times 80$]
    {
        \includegraphics[scale=0.1725,trim=250 80 250 80, clip]{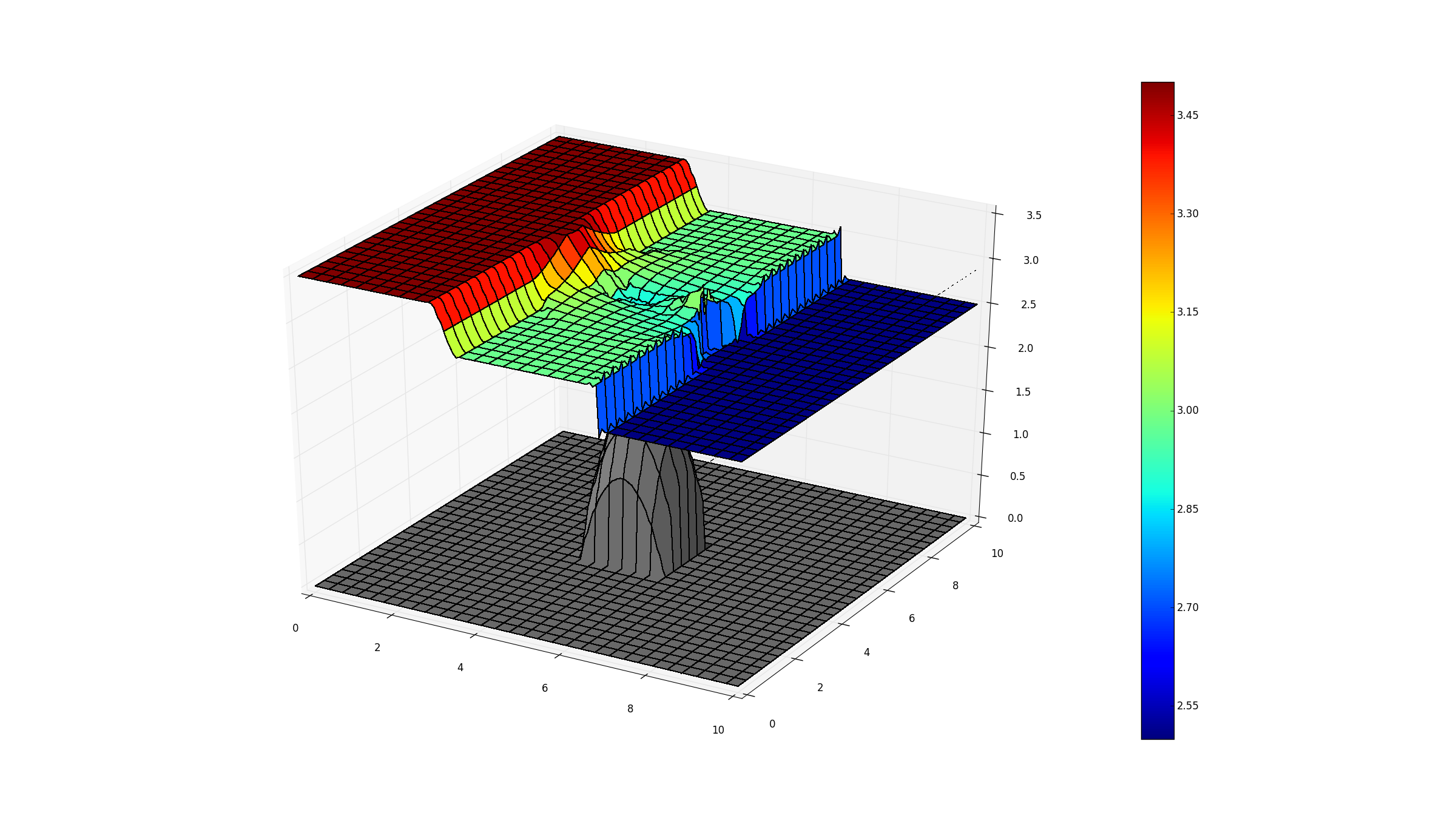}
    }
\subfloat[$160\times 160$]
    {
        \includegraphics[scale=0.1725,trim=250 80 250 80, clip]{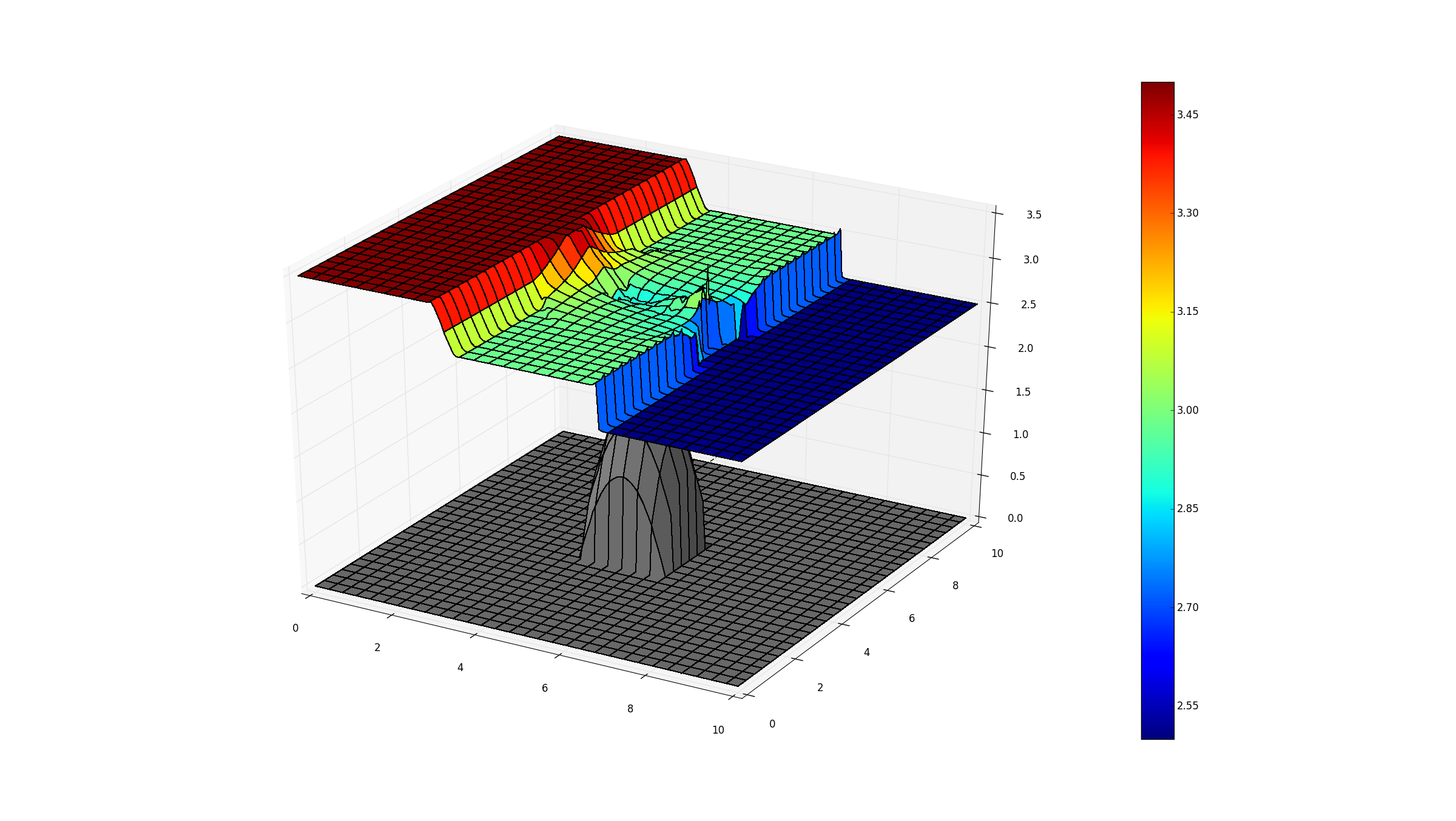}
    }

    \caption{Grid refinement study for the dam break problem modeled by the ESDGSEM at four grid resolutions with $CFL=0.1$ and $N=4$.}
    \label{fig:DamBreakOverDiscBumpES}
\end{figure}

\subsection{Non-linear breaking shallow waters waves}

This test problem simulates the interaction of a hydraulic bore in a shear flow. We also observe how the numerical scheme generates potential vorticity through the passage of a non-uniform bore \cite{Ambati2007,Tassi2007}. For smooth flows the potential vorticity
\begin{equation}
\Pi  = \frac{\Lambda}{h},\quad \Lambda = \pderivative{v}{x} - \pderivative{u}{y},
\end{equation}
is a conserved quantity \cite{Tassi2007,Pedlosky1987}. Hydraulic bores are discontinuities in the flow where energy is dissipated but mass and momentum are conserved. By design the dissipation in the ESDGSEM is generated proportional to the magnitude of the jump in the entropy variables which are large near discontinuities but spectrally small in smooth regions of the flow. Since the dissipation is only applied near bores, potential vorticity can be generated through non-uniform shallow water wave breaking. 

To see how the ESDGSEM generates potential vorticity we begin with an initial flow that has zero vorticity. We consider a flow where the bottom topography is zero, $b\equiv 0$, with the initial linear gravity wave
\begin{equation}\label{eq:gravityWave}
\begin{aligned}
h(x,y) &= 1+ A\sin(ly)\sin(kx)\\
u(x,y) &= -\frac{kAg}{\omega}\sin(ly)\sin(kx)\\
v(x,y) &= \frac{lAg}{\omega}\cos(ly)\cos(kx)
\end{aligned}
\end{equation}
in the rectangular domain $\Omega=[-0.5,0.5]^2$. We set solid wall boundary conditions at $y=\pm0.5$ and periodic boundary conditions in the $x-$direction. The quantities in the initial conditions \eqref{eq:gravityWave} are
\begin{equation}
k =2\pi m,\quad l =(2n+1)\pi,\quad \omega^2 = g(k^2+l^2),\quad A = 0.1,\quad g = 1,\quad m = 2,\quad n = 0.
\end{equation}
We run three simulations for the initial conditions \eqref{eq:gravityWave} each with 40,000 degrees of freedom:
\begin{enumerate}\label{eq:potvort}
\item $N=1$ with a uniform $100\times100$ Cartesian mesh,
\item $N=3$ with a uniform $50\times50$ Cartesian mesh,
\item $N=7$ with a uniform $25\times25$ Cartesian mesh.
\end{enumerate}
The first configuration makes the ESDGSEM a second order spatial approximation which can be directly compared to previous results, namely Fig. 21 in \cite{Ambati2007} and Fig. 4 in \cite{Tassi2007}. 
The other two configurations yield higher order spatial approximations to be used for comparison with the low-order computation.

Due to the non-linearity in the shallow water equations, the higher amplitudes in the gravity waves begin to break at $t\approx 0.5$. The breaking waves create peaks at the crests and troughs of the free surface near to the walls, as shown in Fig.~\ref{fig:bore1}.
\begin{figure}[!ht]
   \centering
    \subfloat[$T=0.0$]
    {
        \includegraphics[scale=0.245]{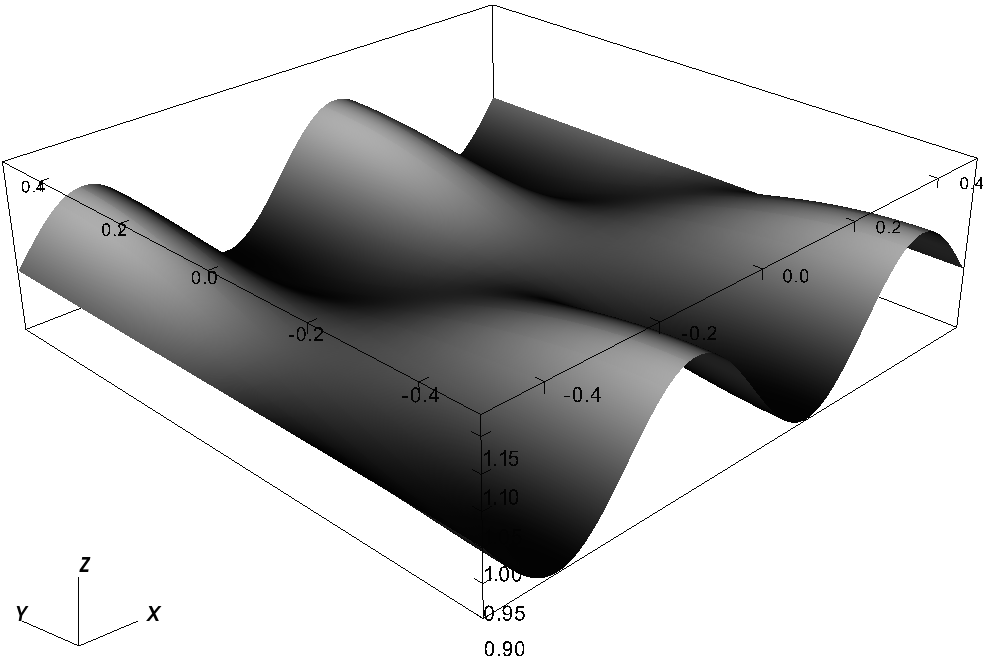}
    }
\subfloat[$T=0.5$]
    {
        \includegraphics[scale=0.245]{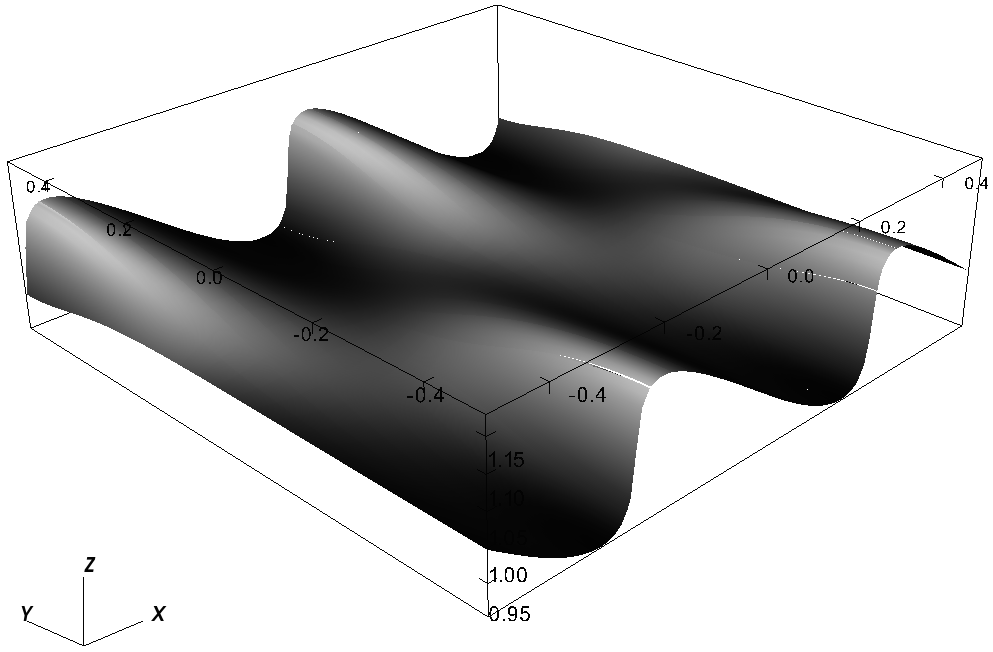}
    }
    \caption{Three dimensional visualization of the initial water height and the water height just before wave breaking occurs at $t\approx 0.5$.}
    \label{fig:bore1}
\end{figure}
The breaking extends to the interior, and bores moving in the negative $x-$direction are formed. The bores are aligned in the $y-$direction with some curvature shown at $T=2$ for $N=1$, $N=3$ and $N=7$ in the left part of Fig.~\ref{fig:bore2}. We also present, in the right part of Fig.~\ref{fig:bore2}, the point-wise potential vorticity at $T=2$ computed using the local derivative to approximate the vorticity $\Lambda$. The computed water height and potential vorticity for $N=1$ compares well with the results of Tassi et al. \cite{Tassi2007}. For the $N=3$ and $N=7$ computations we see there are some oscillations in the vicinity of the discontinuities and more resolution in the smooth parts of the flow. The generated potential vorticity follows the shape of the discontinuities in the solution.
\begin{figure}[!ht]
   \centering
    \subfloat[$h$, $N=1$]
    {
        \includegraphics[scale=0.27]{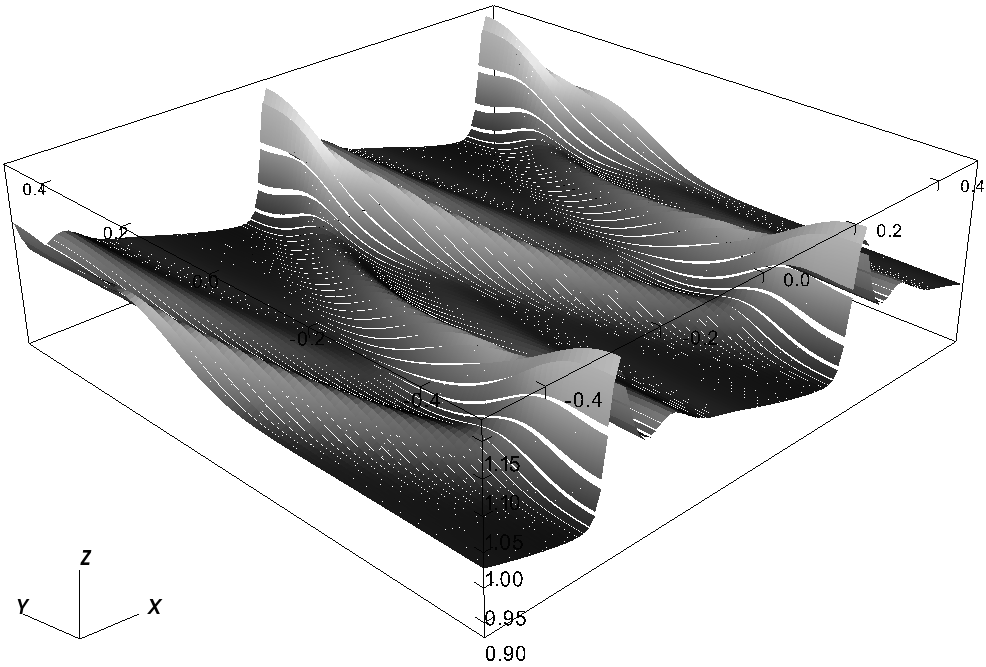}
    }
\subfloat[Potential Vorticity, $N=1$]
    {
        \includegraphics[scale=0.27]{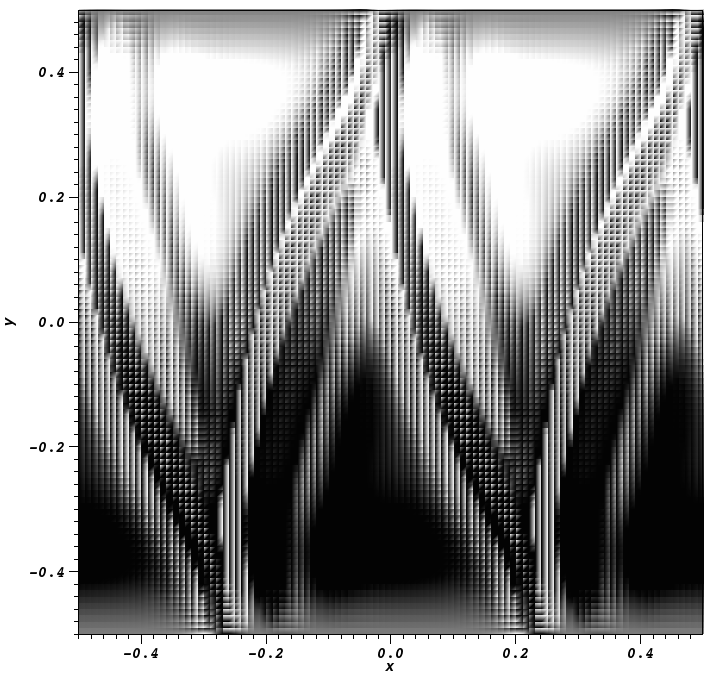}
    }
  \\
      \subfloat[$h$, $N=3$]
    {
        \includegraphics[scale=0.27]{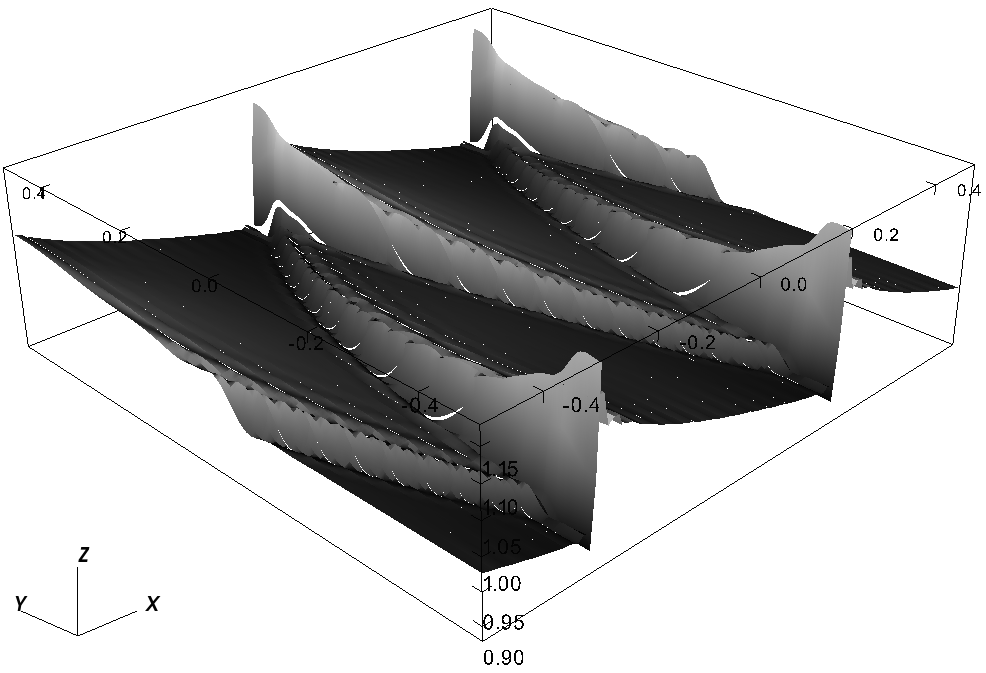}
    }
\subfloat[Potential Vorticity, $N=3$]
    {
        \includegraphics[scale=0.27]{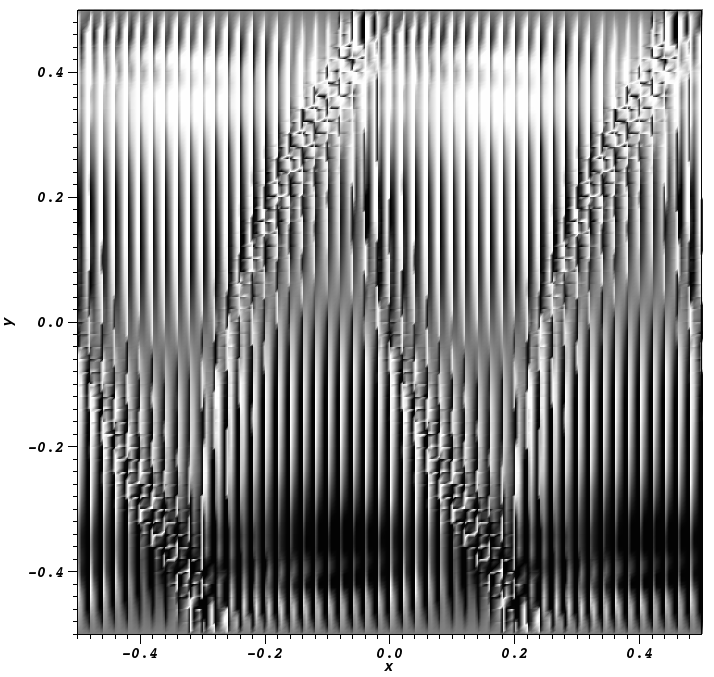}
    }
    \\
        \subfloat[$h$, $N=7$]
    {
        \includegraphics[scale=0.27]{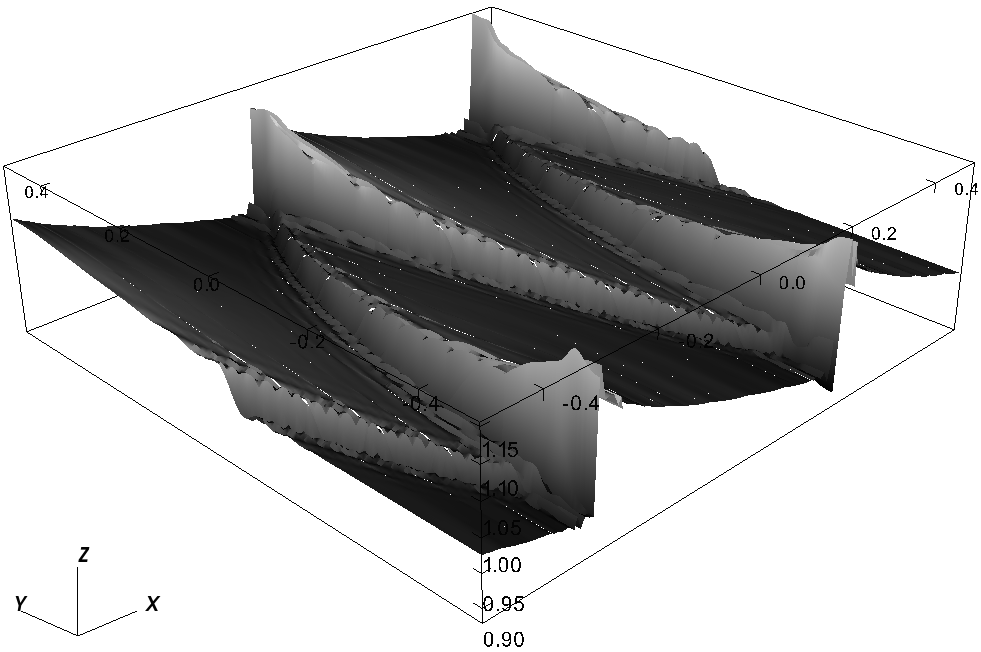}
    }
\subfloat[Potential Vorticity, $N=7$]
    {
        \includegraphics[scale=0.27]{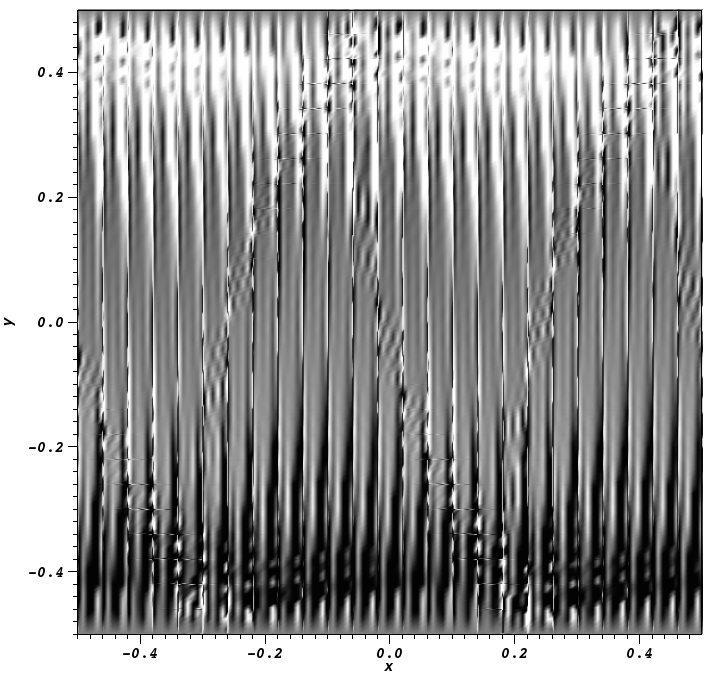}
    }
    \caption{At left is a three dimensional visualization of the water height, $h$, at $T=2$ where wave breaking has occurred and bores are generated. On the right is the approximate potential vorticity \eqref{eq:potvort} where the color ranges between -0.02 (black) to 0.02 (white). Numerical artifacts near element boundaries are visible due to the local derivative computation, without post-processing, of \eqref{eq:potvort}.}
    \label{fig:bore2}
\end{figure}
We again note that ESDGSEM adds dissipation to ensure entropy stability but is not guaranteed to be overshooot free. This explains the noise at the element boundaries in the potential vorticity plots particular for the $N=7$ computation. Without any post-processing of the numerical solution visible numerical artifacts are aligned with the computational grid.

\subsection{Parabolic dam break}
In Secs. \ref{sec:convergence} - \ref{sec:breakOverBump} we have verified the theoretical properties of the EC and ESDGSEM. For the next example we combine each aspect of the numerical scheme and model the fluid flow from the partial break of a parabolic dam. First, we will demonstrate the well-balancedness of the ESDGSEM approximation on curvilinear grids, i.e. numerical verification of Property~\ref{thm:ESDGSEM}.3 of Thm.~\ref{thm:ESDGSEM}. To do so we consider the solution before the failure of the dam, which amounts to two ``lake at rest'' problems on the left and right sides of the dam. On the right side of the dam we also place a discontinuous bottom topography. Then, we allow the dam to fail and examine the flow.

For both numerical tests in this section we use a domain $\Omega = [-5,5]^2$, which is divided into 1600  quadrilateral elements. We model a parabolic dam placed near the center of the domain $\Omega$ with the curve
\begin{equation}\label{dam}
x = \frac{1}{25}y^2 - \frac{1}{4}.
\end{equation}
Finally, for each configuration, we place a discontinuous bottom topography on the downstream side of the dam of the form
\begin{equation}\label{logBottom}
b_4(x,y) = \left\{
\begin{aligned}
&2.0 + \ln(x - 1.25) \quad\textrm{if }x \geq 2.25, \\
&\;\;\quad\qquad 0 \!\;\;\;\quad\qquad\quad\textrm{if }x < 2.25,
\end{aligned}
\right.
\end{equation}

\subsubsection{ESDGSEM well-balancedness}

We first solve the ``lake at rest'' problem on either side of the dam before it fails. We will consider the water height to the left of the dam to be higher than the water on the right of the dam. We consider the initial conditions
\begin{equation}\label{initialDam}
h(x,y,0) = \left\{
\begin{aligned}
&10 - b_4(x) \quad\textrm{if }x < \frac{1}{25}y^2 - \frac{1}{4}, \\[0.1cm]
&\;5 - b_4(x) \quad\;\textrm{if }x > \frac{1}{25}y^2 - \frac{1}{4},
\end{aligned}
\right.,\quad u(x,y,0) = v(x,y,0) = 0.
\end{equation}
We set periodic boundary conditions for each lake individually. This test serves to demonstrate the well-balancedness of the ESDGSEM on a curvilinear mesh, including a discontinuous source term \eqref{logBottom}, Property~\ref{thm:ESDGSEM}.3 of Thm~\ref{thm:ESDGSEM}. For the test problem we take the time step to be $\Delta t = 1/5000$ and integrate to a final time of $T= 5$. We present the $L_2$ error in the approximation of the constant total water height, $H=h+b$, in Table \ref{tab:L2ErrorsLakeAtRest_ESDGSEM}. We find that on either side of the parabolic dam the error in the computed water height is on the order of machine precision.  
\begin{table}[!ht]
\begin{center}
\begin{tabular}{ccc}
\toprule
N & $L_2$-error ESDGSEM (left) & $L_2$-error ESDGSEM (right) \\
\toprule
3 & 5.52E-14 & 4.21E-14 \\\midrule
4 & 7.04E-14 & 6.22E-13 \\\midrule
5 & 1.82E-13 & 1.05E-13 \\\bottomrule
\end{tabular}
\end{center}
\caption{$L_2$-error of the approximation of the total water height, $H=h+b$, to the ``lake at rest'' problem for the parabolic dam at $T = 5$ for various values of $N$. The second column shows the error to the left of the curved dam where there is no bottom topography. The third column gives the error to the right of the curved dam where there is a discontinuous bottom given by \eqref{logBottom}.}
\label{tab:L2ErrorsLakeAtRest_ESDGSEM}
\end{table}%

\subsubsection{Partial dam break with discontinuous bottom topography}

The final demonstration considers the partial failure of a parabolic dam. The initial conditions are given by \eqref{initialDam}. The boundary conditions are periodic along the lines $y=5$ and $y=-5$, Dirichlet along the lines $x=5$ and $x=-5$, and reflecting wall boundary states along the unbroken parts of the parabolic dam. We assume instantaneous failure of the portion of the dam in the region $y\in[-0.5,0.5]$. It is only in this region that the two states interact. 

First, we provide a visual grid convergence study for this complex test problem that has no analytical solution. In Fig. \ref{fig:PartialDamGridConv} we provide the computed solution of the water height for three polynomial orders $N=3$, $N=5$, and $N=7$, with $\Delta t = 1/1500$, integrated to a final time of $T=1.5$. The overlay of quadrilaterals represents the spectral element mesh. We see from the numerical $p$-refinement study that the waves in the approximation are well-resolved for $N=5$ and $N=7$.
\begin{figure}[!ht]
   \centering
    \subfloat[$N=3$]
    {
        \includegraphics[scale=0.1795]{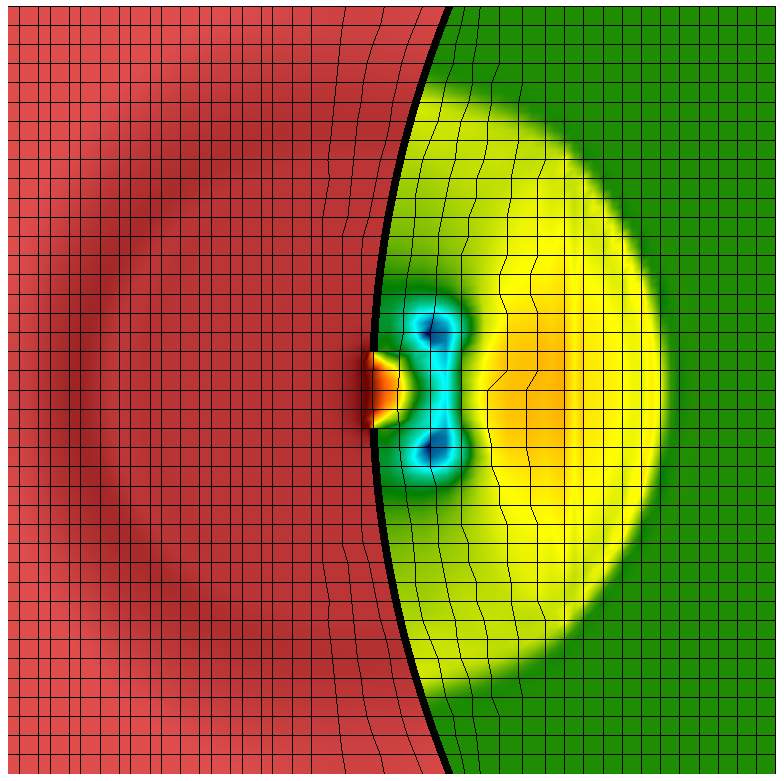} 
    }
\subfloat[$N=5$]
    {
        \includegraphics[scale=0.1795]{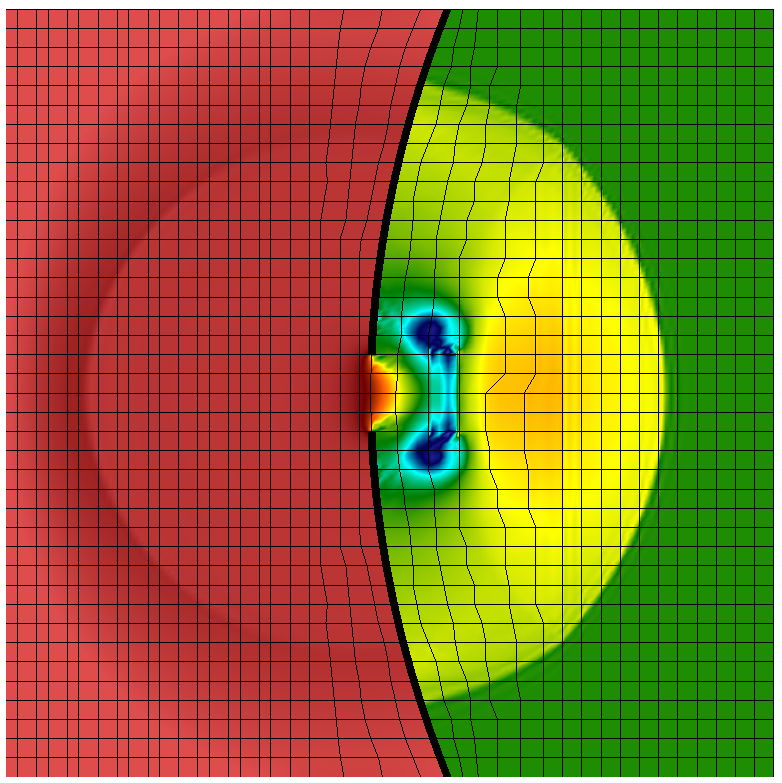}
    }
\subfloat[$N=7$]
    {
        \includegraphics[scale=0.1795]{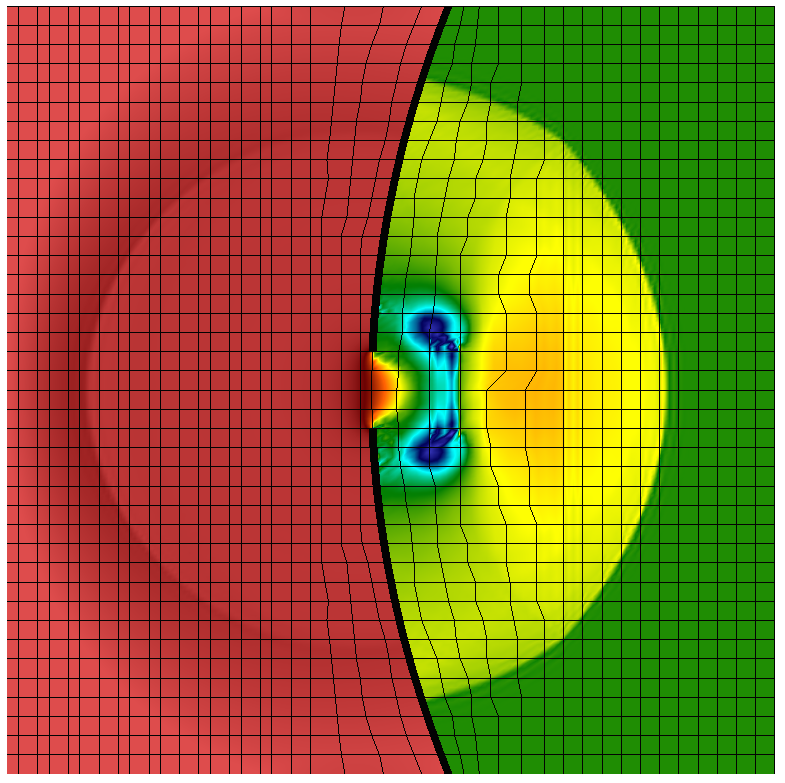}
    }
    \caption{Grid convergence study for the ESDGSEM approximation of the parabolic partial dam break configuration with $\Delta t = 1/1500$ at $T=1.5$. The overlay of quadrilaterals represents the mesh and the thick black line represents the unbroken portion of the parabolic dam. The color ranges between 3 (blue) and 10 (red).}
    \label{fig:PartialDamGridConv}
\end{figure}

From the grid convergence study we know that the computation is sufficiently well resolved with polynomial order $N=5$ in each element. So with $N=5$, we show the evolution of the water height of the partial dam break problem at times $T=0.0$, $T=0.5$, $T=1.0$, and $T=1.5$ in Fig. \ref{fig:PartialDamTime}. Again for this computation, we chose a time step of $\Delta t = 1/1500$. This numerical test combines each aspect of the ESDGSEM approximation, i.e., a discontinuous solution, curvilinear mesh and discontinuous bottom topography. The pseudocolor plots in Fig. \ref{fig:PartialDamTime} show the propagation of eddies near the dam break. Lastly, we provide in Fig. \ref{fig:PartialDam3D} a three dimensional visualization of the partial dam break where we can see on the downstream side of the dam the interaction of the flow with the discontinuous bottom topography \eqref{logBottom}.
\begin{figure}[!ht]
   \centering
    \subfloat[$T=0.0$]
    {
        \includegraphics[scale=0.27]{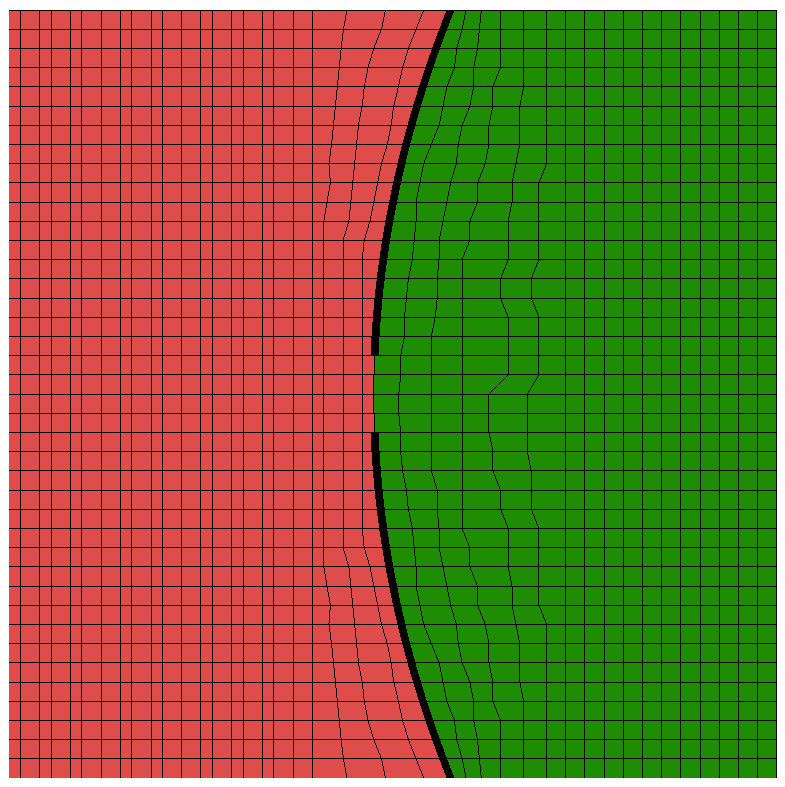}
    }
\subfloat[$T=0.5$]
    {
        \includegraphics[scale=0.27]{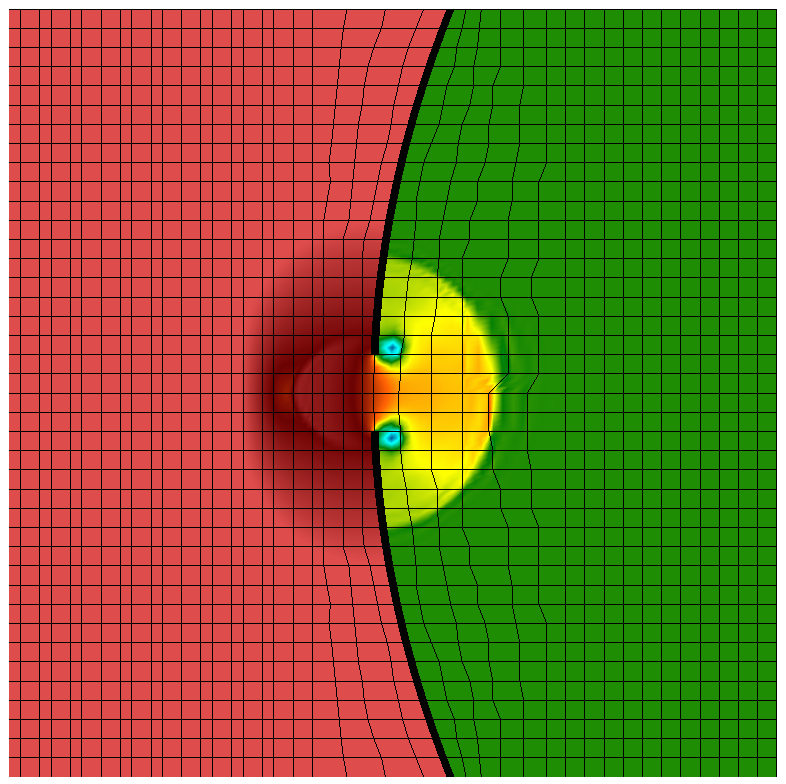}
    }
    \\
\subfloat[$T=1.0$]
    {
        \includegraphics[scale=0.27]{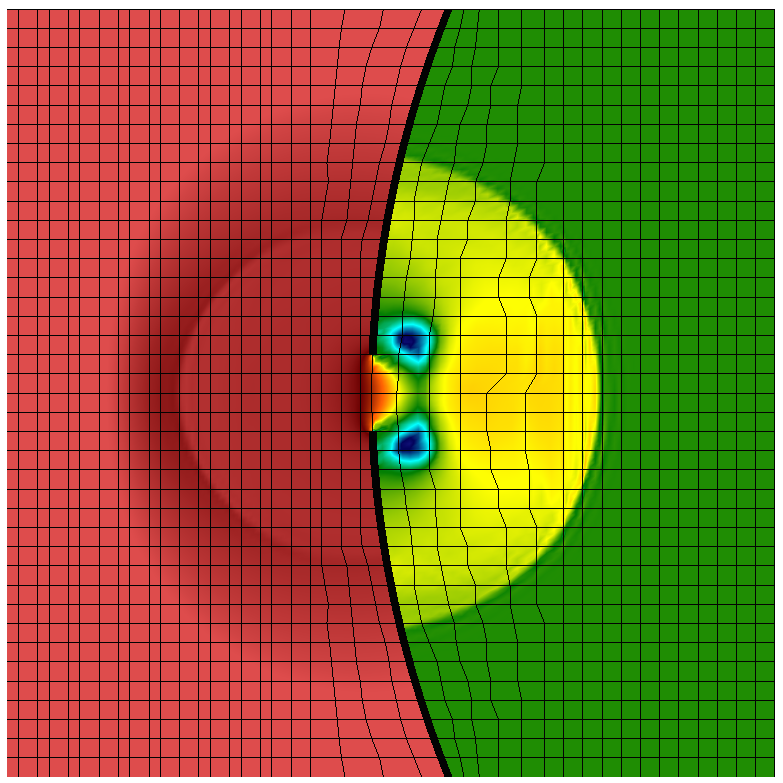}
    }
\subfloat[$T=1.5$]
    {
        \includegraphics[scale=0.27]{N5wd.png}
    }
    \caption{ESDGSEM approximation for the parabolic partial dam break at four times with $N=5$ and $\Delta t = 1/1500$. The overlay of quadrilaterals represents the mesh and the thick black line represents the unbroken portion of the parabolic dam. The color ranges between 3 (blue) and 10 (red).}
    \label{fig:PartialDamTime}
\end{figure}

\begin{figure}[!ht]
   \centering
    \subfloat[$T=0.0$]
    {
        \includegraphics[scale=0.25]{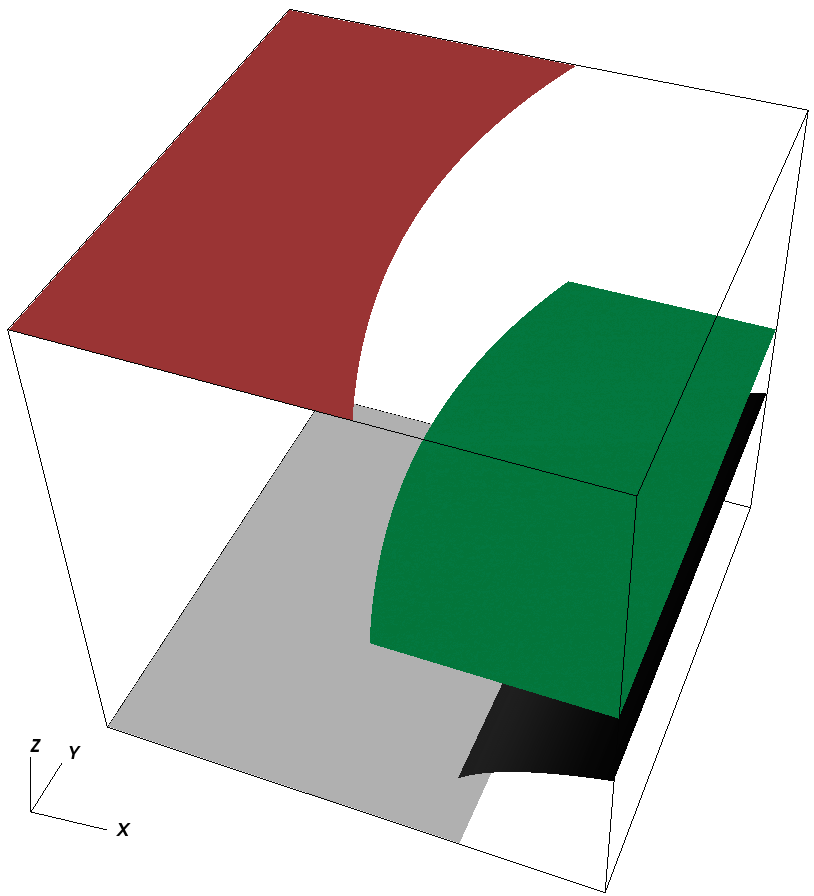}
    }
\subfloat[$T=0.5$]
    {
        \includegraphics[scale=0.25]{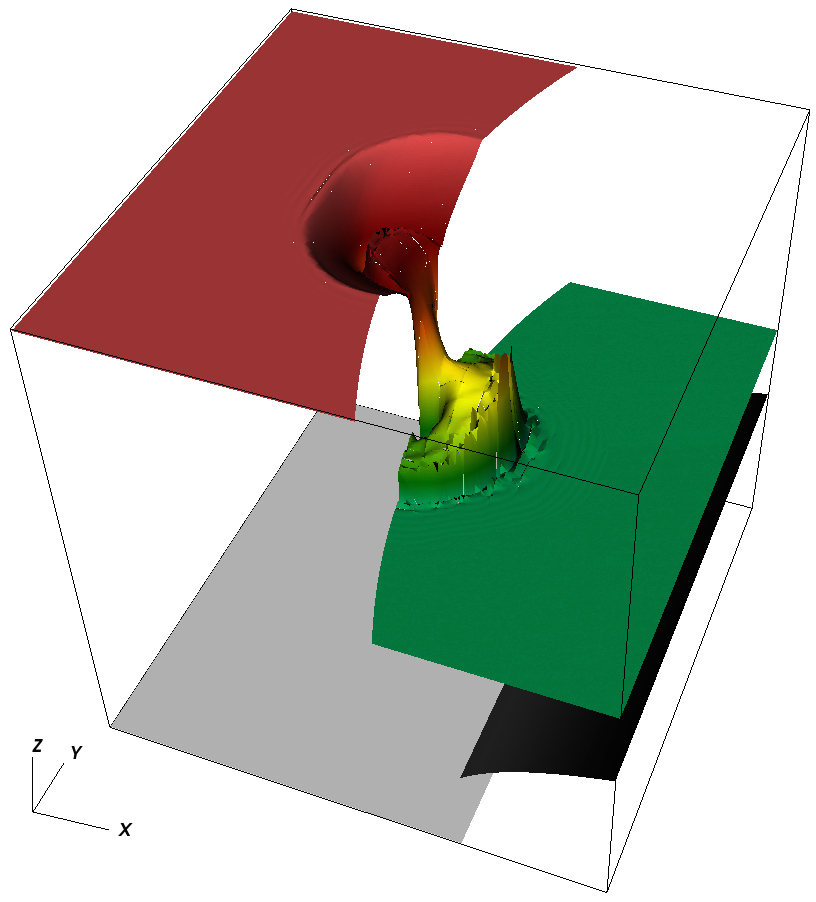}
    }
    \\
\subfloat[$T=1.0$]
    {
        \includegraphics[scale=0.25]{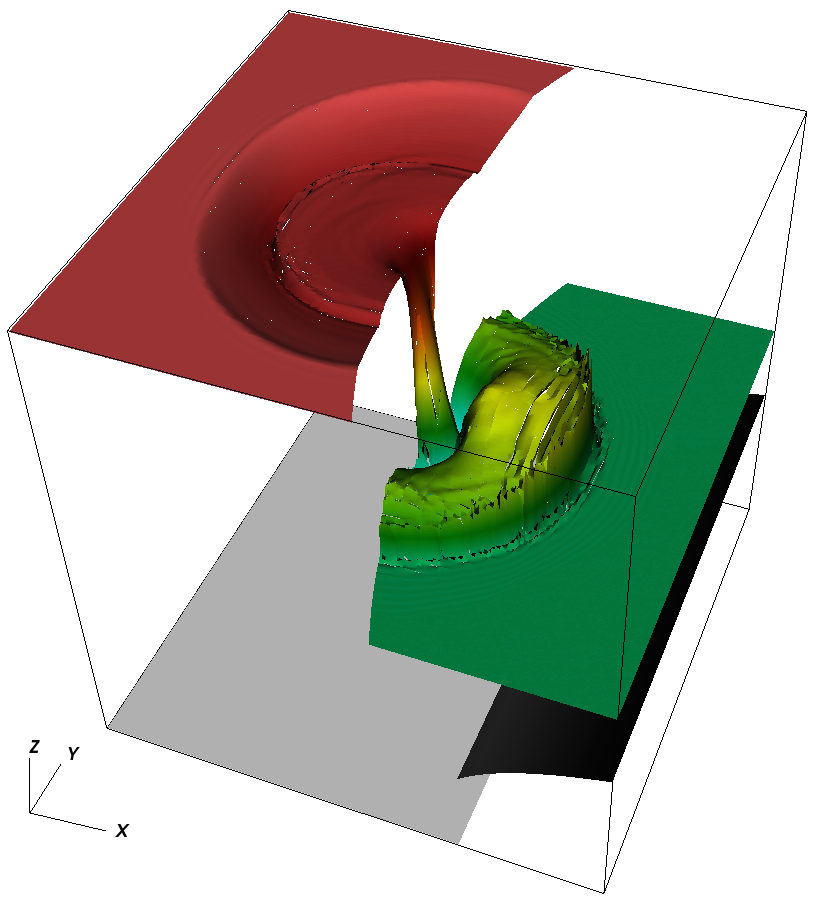}
    }
\subfloat[$T=1.5$]
    {
        \includegraphics[scale=0.25]{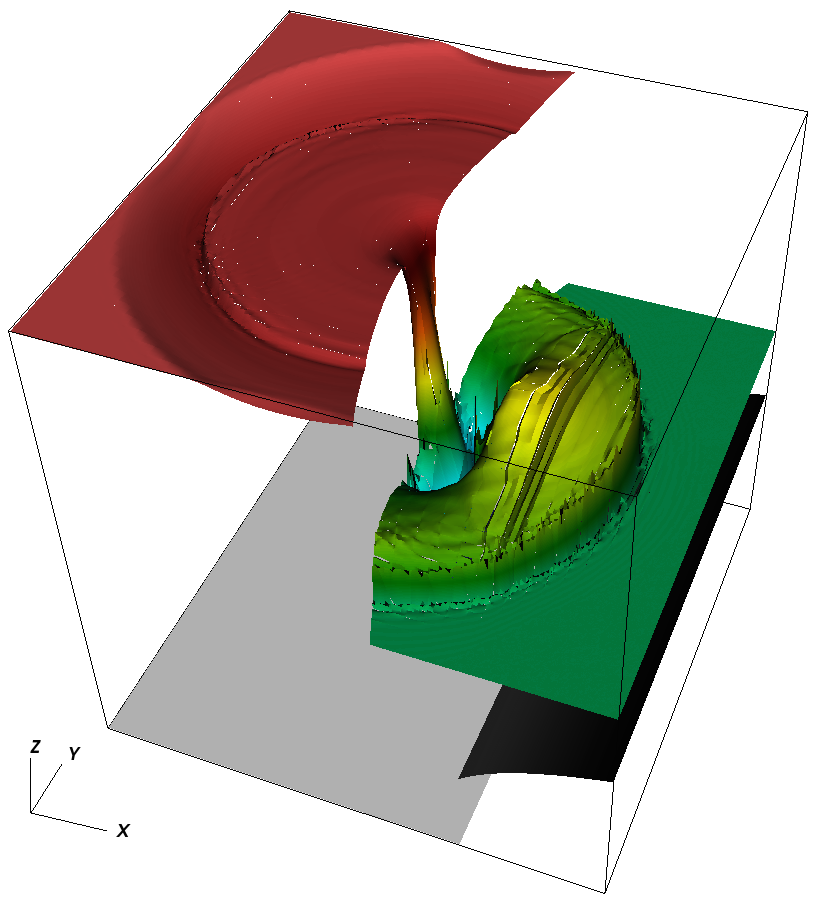}
    }
    \caption{Three dimensional visualization of the ESDGSEM approximation for the parabolic partial dam break at various times with $N=5$ and $\Delta t = 1/1500$. Here the interaction of the flow with the discontinuous bottom topography is clear. The $z-$axis of the plot is from 0 to 10.}
    \label{fig:PartialDam3D}
\end{figure}

The numerical solution of the parabolic dam break problem demonstrates that the entropy stable numerical approximation can capture shock and rarefraction waves. However, the dissipation added to guarantee entropy stability is \textit{not} designed to make the method overshoot free. Additional shock capturing techniques for DG type methods are necessary to remove the remaining oscillations, e.g. \cite{persson2006}.

Lastly, we note that a standard DGSEM scheme is unstable when solving the partial dam break problem from a parabolic dam, even for very small time steps.

\section{Conclusion}\label{sec:conclusion}

In this work we developed a new high-order entropy conserving and entropy stable DGSEM discretisations for the two dimensional shallow water equations on general curvilinear meshes. To highlight the conservative property of the approximation on curvilinear meshes we reformulated the approximation into an equivalent flux differencing form. With this reformulation it is straightforward to demonstrate local conservation. 
Applying results of Fisher and Carpenter \cite{fisher2013}, a careful choice of the numerical volume and surface fluxes leads to an entropy conservative scheme. The flux differencing form also provided additional flexibility to guarantee that the approximation remains well-balanced. For non-constant bottom topographies we found that the recovery of special steady-states of the shallow water equations depends on a special discretisation of the nonlinear source term. By considering a particular source term discretisation we maintained well-balancedness for discontinuous bottom topographies in general curvilinear coordinates. Finally, it is known that energy must be dissipated at shocks, but the entropy conserving scheme is dissipation free modulo any dissipative effects of the time integrator. The numerical solution can therefore capture shocks and rarefactions accurately but at the cost of significant post-shock oscillations. Thus, we provided entropy stable numerical fluxes to add dissipation to the scheme and control overshoots. Note that the dissipation added is merely the amount necessary to guarantee entropy stability and is \emph{not} designed to make the method overshoot free.

We provided six numerical examples to demonstrate and underline the theoretical findings. The simulation of the flow from a parabolic shaped partial dam break  exercised each component of the novel ESDGSEM approximation.

\appendix

\section{Proof of Prop. \ref{prop1}}\label{sec:fluxDiffFormProof}
\begin{proof}
To show the flux difference formula \eqref{eq:fluxDiffForm} we first consider the single difference for the components $i=1$ and $i=2$ and a fixed index for $j$. The argument presented readily extends to the other flux difference components in both the $i$ and $j$ directions. From the high-order flux extension on curvilinear grids \eqref{highOrderFluxCurvilinear} we have for $i=1$
\begin{equation}\label{firstFluxDiffTerm}
\bar{\tilde{F}}_k^{1,j} = \sum_{m=1}^{N}2Q_{0 m}\left(F_k^{vol}(\statevec W^{0,j}, \statevec W^{m,j})\average{Ja^1_1}_{(0,m),j}+G_k^{vol}(\statevec W^{0,j}, \statevec W^{m,j})\average{Ja^1_2}_{(0,m),j}\right),
\end{equation}
and for $i=2$ 
\begin{equation}\label{secondFluxDiffTerm}
\bar{\tilde{F}}_k^{2,j} = \sum_{m=2}^{N}\sum_{\ell=0}^{1}2Q_{\ell m}\left(F_k^{vol}(\statevec W^{\ell,j}, \statevec W^{m,j})\average{Ja^1_1}_{(\ell,m),j}+G_k^{vol}(\statevec W^{\ell,j}, \statevec W^{m,j})\average{Ja^1_2}_{(\ell,m),j}\right).
\end{equation}
We expand the second component \eqref{secondFluxDiffTerm} to find
\begin{equation}\label{secondFluxDiffTermExpand}
\begin{aligned}
\bar{\tilde{F}}_k^{2,j} = \sum_{m=2}^{N}&\left(2Q_{0 m}\left(F_k^{vol}(\statevec W^{0,j}, \statevec W^{m,j})\average{Ja^1_1}_{(0,m),j}+G_k^{vol}(\statevec W^{0,j}, \statevec W^{m,j})\average{Ja^1_2}_{(0,m),j}\right)\right.\\
&\,+\left.2Q_{1 m}\left(F_k^{vol}(\statevec W^{1,j}, \statevec W^{m,j})\average{Ja^1_1}_{(1,m),j}+G_k^{vol}(\statevec W^{1,j}, \statevec W^{m,j})\average{Ja^1_2}_{(1,m),j}\right)\right).
\end{aligned}
\end{equation}
We subtract \eqref{firstFluxDiffTerm} from \eqref{secondFluxDiffTermExpand} and cancel like terms to determine
\begin{equation}\label{FluxDiffTerm2}
\begin{aligned}
\bar{\tilde{F}}_k^{2,j} -\bar{\tilde{F}}_k^{1,j} &= -2Q_{01}\left(F_k^{vol}(\statevec W^{0,j}, \statevec W^{m,j})\average{Ja^1_1}_{(0,m),j}+G_k^{vol}(\statevec W^{0,j}, \statevec W^{m,j})\average{Ja^1_2}_{(0,m),j}\right) \\
&\quad+\sum_{m=2}^N 2Q_{1 m}\left(F_k^{vol}(\statevec W^{1,j}, \statevec W^{m,j})\average{Ja^1_1}_{(1,m),j}+G_k^{vol}(\statevec W^{1,j}, \statevec W^{m,j})\average{Ja^1_2}_{(1,m),j}\right).
\end{aligned}
\end{equation}
We know from the nearly skew-symmetric structure of the SBP matrix $\qmat$ that
\begin{equation}\label{QStuff}
-Q_{01} = Q_{10}, \quad Q_{11} = 0.
\end{equation}
We use the properties \eqref{QStuff} and that the arithmetic mean and the volume flux, by assumption, are symmetric to collect the terms from \eqref{FluxDiffTerm2} into a single sum
\begin{equation}\label{sumDiffProof}
\begin{aligned}
\bar{\tilde{F}}_k^{2,j} -\bar{\tilde{F}}_k^{1,j} = \sum_{m=0}^N 2Q_{1 m}\left(F_k^{vol}(\statevec W^{1,j}, \statevec W^{m,j})\average{Ja^1_1}_{(1,m),j}+G_k^{vol}(\statevec W^{1,j}, \statevec W^{m,j})\average{Ja^1_2}_{(1,m),j}\right).
\end{aligned}
\end{equation}
We then generalise the calculation of the flux difference
\begin{equation}\label{sumDiffProofGeneral}
\begin{aligned}
\bar{\tilde{F}}_k^{{i+1},j} -\bar{\tilde{F}}_k^{i,j} = \sum_{m=0}^N 2Q_{i m}\left(F_k^{vol}(\statevec W^{i,j}, \statevec W^{m,j})\average{Ja^1_1}_{(i,m),j}+G_k^{vol}(\statevec W^{i,j}, \statevec W^{m,j})\average{Ja^1_2}_{(i,m),j}\right),
\end{aligned}
\end{equation}
for $i = 0,\ldots,N$. We then premultiply by the inverse of $\mmat$ to obtain the desired flux differencing result \eqref{eq:fluxDiffForm} for $\diffmat\overline{\mat{\tilde{F}}}$
\begin{equation}\label{sumDiffProofGeneral2}
\begin{aligned}
\frac{\bar{\tilde{F}}_k^{{i+1},j} -\bar{\tilde{F}}_k^{i,j}}{\omega_i} = \frac{1}{\omega_i}\sum_{m=0}^N 2Q_{i m}\left(F_k^{vol}(\statevec W^{i,j}, \statevec W^{m,j})\average{Ja^1_1}_{(i,m),j}+G_k^{vol}(\statevec W^{i,j}, \statevec W^{m,j})\average{Ja^1_2}_{(i,m),j}\right).
\end{aligned}
\end{equation}
An identical strategy can be used in the $j$ index direction to rewrite the flux difference in the $y-$direction, $\overline{\mat{\tilde{G}}}_k\boldsymbol{\Delta}^T\mmat^{-1}$, in the similar indicial form \eqref{eq:fluxDiffFormG}.
\end{proof}

\section{Proof of Thm.~\ref{thm:ECDGSEM}}

\subsection{Proof of Property~\ref{thm:ECDGSEM}.2}\label{sec:ec_proof}
\begin{proof}
In the flux differencing scheme we use different but consistent fluxes for the volume and interface contributions. The interface fluxes \eqref{eq:surfaceFluxes} are known to be entropy conservative \cite{fjordholm2011}. We will demonstrate here that the volume fluxes are also entropy conservative. The volume fluxes we use are
\begin{equation}
\begin{aligned}
\statevec{F}^{vol}(\statevec{W}^{i,j},\statevec{W}^{m,j}) &= \begin{pmatrix}
\average{hu}_{(i,m),j} \\[0.1cm]
\average{hu}_{(i,m),j}\average{u}_{(i,m),j} +g\average{h}_{(i,m),j}^2-\half g \average{h^2}_{(i,m),j} \\[0.1cm]
\average{hu}_{(i,m),j}\average{v}_{(i,m),j}
\end{pmatrix},\\
\\
\statevec{G}^{vol}(\statevec{W}^{i,j},\statevec{W}^{i,m}) &= \begin{pmatrix}
\average{hv}_{i,(j,m)} \\[0.1cm] 
\average{hv}_{i,(j,m)}\average{u}_{i,(j,m)} \\[0.1cm]
\average{hv}_{i,(j,m)}\average{v}_{i,(j,m)} +g\average{h}_{i,(j,m)}^2-\half g \average{h^2}_{i,(j,m)}
\end{pmatrix}.
\end{aligned}
\end{equation}
Similar to \cite{fjordholm2011}, a criterion for discrete entropy conservation is
\begin{equation}
\label{entropyCriterion}
\jump{\statevec{q}\,}^T (\statevec{F}^{vol} + \statevec{G}^{vol})= \jump{\phi} + \average{\statevec{q}}^T\statevec{s} = \jump{\phi} + g\average{hu}\jump{b} + g\average{hv} \jump{b}, \\
\end{equation}
where 
the entropy potential $\phi$ is defined as
\begin{equation}
\phi = \statevec{q}\cdot (\statevec{f}+\statevec{g}) - (\mathcal{F}+\mathcal{G}) = \half g h^2 u + \half g h^2 v, \\
\end{equation}
where $\statevec{f},\,\statevec{g}$ are the physical fluxes \eqref{eq:physicalFluxes}, $\mathcal{F},\,\mathcal{G}$ are the entropy fluxes \eqref{EnergyFluxes} and we have used the consistent auxiliary source term discretisation 
\begin{equation}
\statevec{s}:= \begin{pmatrix}
0\\[0.1cm]
g\frac{\average{hu}}{\average{u}} \jump{b}\\[0.1cm]
g\frac{\average{hv}}{\average{v}} \jump{b}
\end{pmatrix}.\\ 
\end{equation}
The jump in entropy variables is
\begin{equation}
\begin{aligned}
&\jump{\statevec{q}\,} = \begin{pmatrix}
g\jump{h} + g\jump{b} - \average{u}\jump{u}-\average{v}\jump{v}\\[0.1cm]
\jump{u}\\[0.1cm]
\jump{v}
\end{pmatrix}.\\
\end{aligned}
\end{equation}
We show \eqref{entropyCriterion} holds explicitly for the $x-$direction
\begin{equation}
\begin{aligned}
\jump{\statevec{q}\,}^T \statevec{F}^{vol} &= g \average{hu} \jump{h} - \average{hu}\average{u}\jump{u} - \average{hu}\average{v}\jump{v} + \average{hu}\average{u}\jump{u}   \\
&\qquad+ g\average{h}^2\jump{u} - \half g\average{h^2}\jump{u} + \average{hu}\average{v}\jump{v} + g\average{hu}\jump{b} \\
&=g \average{hu} \jump{h}- \half g\average{h^2}\jump{u}+ g\average{h}^2\jump{u} + g\average{hu}\jump{b}\\
&=g \average{hu} \jump{h}- \half g\average{h^2}\jump{u}+ g\average{h}\jump{hu} - g\average{h}\average{u}\jump{h} + g\average{hu}\jump{b}\\
&=g \average{hu} \jump{h}- \half g\average{h^2}\jump{u}+ g\jump{h^2u} - g\average{hu}\jump{h} - g\average{h}\average{u}\jump{h} + g\average{hu}\jump{b}\\
&= g\jump{h^2u} - \half g\average{h^2}\jump{u} - g\average{h}\average{u}\jump{h} + g\average{hu}\jump{b}\\
&= g\jump{h^2u} - \half g\jump{h^2u} + \half g\average{u}\jump{h^2} - g\average{h}\average{u}\jump{h} + g\average{hu}\jump{b}\\
&= \half g\jump{h^2u} + g\average{u}\jump{b} = \jump{\phi} + g\average{hu}\jump{b},
\end{aligned}
\end{equation}
and conclude that $\statevec{F}^{vol}$ is entropy conserving. The $y-$direction is treated analogously to show that $\statevec{G}^{vol}$ is also entropy conserving.
\end{proof}

\subsection{Proof of Property~\ref{thm:ECDGSEM}.3}\label{sec:wb_proof}
\begin{proof}
To verify the well-balancedness of the scheme we need to show that it solves the ``lake at rest'' problem correctly, meaning that the initial conditions $h+b=\textrm{const}$ and $u=v=0$ are preserved for all time. This happens if the discrete time derivatives vanish. It is immediately satisfied the discretised continuity equation since $u$ or $v$ are factors in all the terms. However, it is not immediately clear for the momentum equations. We show here in detail that the discrete time derivative vanishes for the $hu$ equation, the $hv$ equation is handled analogously.

In the following proof we make extensive use of the Hadamard product notation \eqref{DefHadamard} for the component-wise multiplication of matrices and define component-wise powers of nodal values by
\begin{equation}
\mat{W}^k:=\mat{W}^{k-1}\circ\mat{W}.
\end{equation}
If we fully expand the flux-differencing form \eqref{Eq:CurvilinearECDGSEM} of the $hu$ equation by using \eqref{ExtensionEq} and the cubic forms from \cite{gassner2016} we get the following scheme
\begin{equation}
\label{FullyExtendedxMom}
\resizebox{\hsize}{!}{$
\begin{aligned}
\mat{J} (\mat{h} \circ \mat{u})_t   & + \fourth (\dmat(\mat{y}_\eta \circ \mat{h} \circ\mat{u}^2)  + (-\mat{y}_\xi \circ\mat{h}\circ\mat{u}^2)\dmatT  ) \\
							& + \fourth (\mat{y}_\eta \circ\dmat(\mat{h}\circ\mat{u}^2)  + \mat{h}\circ\mat{u}^2 \circ\dmat(\mat{y}_\eta)  - \mat{h}\circ\mat{u}^2\circ(\mat{y}_\xi) \dmatT -\mat{y}_\xi \circ( \mat{h}\circ\mat{u}^2)\dmatT) \\
							& + \fourth  ( \mat{h}\circ \mat{u}\circ\dmat(\mat{y}_\eta\circ\mat{u}) + \mat{y}_\eta \circ\mat{h} \circ\mat{u} \circ\dmat( \mat{u}) - \mat{h} \circ\mat{u}\circ(\mat{y}_\xi \circ\mat{u})\dmatT      -\mat{y}_\xi \circ\mat{h} \circ\mat{u}\circ( \mat{u})\dmatT ) \\
							& +\fourth \left(\mat{u}\circ\dmat(\mat{y}_\eta\circ \mat{h}\circ\mat{u})  + \mat{y}_\eta \circ\mat{u}\circ\dmat (\mat{h} \circ\mat{u})  - (\mat{y}_\xi\circ \mat{h}\circ\mat{u}^2 )\dmatT  -\mat{y}_\xi\circ \mat{u}\circ(\mat{h} \circ\mat{u})\dmatT  \right)\\
							& + \frac{g}{2} ( \mat{h}\circ\dmat(\mat{y}_\eta\circ \mat{h} ) + \mat{y}_\eta \circ \mat{h}\circ\dmat( \mat{h} )  - \mat{h}\circ(\mat{y}_\xi \circ\mat{h})\dmatT  -\mat{y}_\xi \circ\mat{h}\circ ( \mat{h})\dmatT) \\
							& + \fourth  (-\dmat(\mat{x}_\eta \circ\mat{h} \circ\mat{u}\circ\mat{v}) + (\mat{x}_\xi \circ \mat{h} \circ\mat{u}\circ\mat{v})\dmatT    ) \\
							& + \fourth ( -\mat{x}_\eta \circ\mat{h} \circ\mat{v} \circ\dmat(\mat{u})- \mat{h} \circ\mat{v}\circ\dmat(\mat{x}_\eta \circ\mat{u})    + \mat{h}\circ \mat{v} \circ\mat{x}_\xi \circ(\mat{u})\dmatT   + \mat{h} \circ\mat{v}\circ(\mat{x}_\xi\circ \mat{u})\dmatT  ) \\
							& + \fourth (-\mat{x}_\eta \circ\dmat( \mat{h} \circ\mat{u}\circ\mat{v})  -\mat{h}\circ \mat{u}\circ\mat{v} \circ\dmat(\mat{x}_\eta)  +\mat{h} \circ\mat{u}\circ\mat{v}\circ (\mat{x}_\xi  )\dmatT + \mat{x}_\xi \circ( \mat{h}\circ \mat{u}\circ\mat{v})\dmatT ) \\
							&+\fourth\left(  -\mat{u}\circ\dmat( \mat{x}_\eta \circ\mat{h}\circ\mat{v}  ) -\mat{x}_\eta\circ \mat{u} \circ\dmat (\mat{h}\circ\mat{v})   	+ (\mat{x}_\xi \circ\mat{h}\circ\mat{v}\circ\mat{u}  )\dmatT	 +\mat{x}_\xi\circ \mat{u} \circ(\mat{h}\circ\mat{v})\dmatT 	\right)\\
							&+\half g\mat{h}\circ \left(\mat{y}_\eta\circ\dmat (\mat{b}) +\dmat(\mat{y}_\eta \circ \mat{b}) -\mat{y}_\xi\circ (\mat{b})\dmatT - (\mat{y}_\xi \circ \mat{b})\dmatT \right)  \\
	&= -\frac{g}{2} \mat{M}^{-1}\left( \mat{y}_\eta \circ\average{\mat{h}}_{\xi}\circ\jump{\mat{b}}_{\xi} \right) - \frac{g}{2}\left(-\mat{y}_\xi \circ\average{\mat{h}}_{\eta} \circ\jump{\mat{b}}_{\eta} \right)\mat{M}^{-1}
	+\mat{S}\left(\tilde{\mat{F}}^*_2-\tilde{\mat{F}}_2\right) +\left(\tilde{\mat{G}}^*_2-\tilde{\mat{G}}_2\right)\mat{S}.
\end{aligned}$}
\end{equation}

The source term discretisation is a key factor in obtaining a well-balanced scheme. In accordance with our approach for the flux terms, we have also discretised the source term as a quadratic split form. To account for possibly discontinuous bottom topographies on element interfaces we have introduced an additional interface part that vanishes for continuous topographies. The full source term discretisation is
\begin{equation}
\begin{aligned}
\frac{g}{2} \mat{h} \circ \mat{b}_x \approx \quad &\frac{g}{2}\mat{h} \circ \left(\mat{y}_\eta \circ\dmat (\mat{b}) +\dmat(\mat{y}_\eta \circ\mat{b}) -\mat{y}_\xi  \circ(\mat{b})\dmatT - (\mat{y}_\xi \circ\mat{b})\dmatT \right) \\
&+\frac{g}{2} \mat{M}^{-1}\left( \mat{y}_\eta \circ\average{\mat{h}}_{\xi}\circ\jump{\mat{b}}_{\xi} \right) - \frac{g}{2}\left( \mat{y}_\xi  \circ\average{\mat{h}}_{\eta} \circ\jump{\mat{b}}_{\eta} \right)\mat{M}^{-1},
\end{aligned}
\end{equation}
where we use notation \eqref{SourceJmpApproxMatrices} for the jump in bottom topography and the average water height. Using the definitions of the numerical and physical fluxes, the interface terms of the strong form DG discretisation are
\begin{equation}
\resizebox{\hsize}{!}{$
\begin{aligned}
&\mat{S}\left(\tilde{\mat{F}}^*_2-\tilde{\mat{F}}_2\right) +\left(\tilde{\mat{G}}^*_2-\tilde{\mat{G}}_2\right)\mat{S} =\\
&+\smat\left(\mat{y}_\eta  \circ(\average{\mat{u}}^2\circ\average{\mat{h}} + \frac{g}{2}\average{\mat{h}^2}) -\mat{x}_\eta\circ \average{\mat{u}} \circ\average{\mat{v}} \circ\average{\mat{h}} -  \mat{y}_\eta \circ \left(\mat{h}\circ\mat{v}\circ\mat{u}^2+\frac{g}{2}\mat{h}^2\right) +\mat{x}_\eta \circ\mat{h}\circ\mat{u}\circ\mat{v}) \right) \\
&+\left(-\mat{y}_\xi \circ(\average{\mat{u}}^2\circ\average{\mat{h}} + \frac{g}{2}\average{\mat{h}^2}) +  \mat{x}_\xi  \circ\average{\mat{u}} \circ\average{\mat{v}} \circ\average{\mat{h}}   + \mat{y}_\xi\circ\left(\mat{h}\circ\mat{u}^2+\frac{g}{2}\mat{h}^2\right) - \mat{x}_\xi  \circ \mat{h}\circ\mat{u}\circ\mat{v} \right)\smat.
\end{aligned}$}
\end{equation}
Under the ``lake at rest'' initial conditions, $u=v=0$, many terms in the momentum equation \eqref{FullyExtendedxMom} vanish. The remaining terms are
\begin{equation}
\begin{aligned}
\label{LakeAtRestxMom}
\mat{J}\circ (\mat{h}\circ \mat{u})_t   & + \half ( g\mat{h}\circ\dmat(\mat{y}_\eta \circ \mat{h} ) + \mat{y}_\eta \circ g\mat{h}\circ\dmat( \mat{h} )  - g\mat{h}\circ(\mat{y}_\xi \circ \mat{h})\dmatT  - \mat{y}_\xi \circ g\mat{h} \circ ( \mat{h})\dmatT) \\
&+\half g\mat{h} \circ\left(\mat{y}_\eta \circ\dmat (\mat{b}) +\dmat(\mat{y}_\eta \circ \mat{b}) - \mat{y}_\xi \circ (\mat{b})\dmatT - (\mat{y}_\xi \circ \mat{b})\dmatT \right)  \\
&=-\mat{M}^{-1}\left( \mat{y}_\eta \circ \average{\mat{h}}_{\xi} \circ \jump{\mat{b}}_{\xi} \right) +\smat\left(\mat{y}_\eta \circ \left(\average{\mat{h}^2}  -  \mat{h}^2\ \right) \right)  \\
&- \left( - \mat{y}_\xi \circ \average{\mat{h}}_{\eta} \circ \jump{\mat{b}}_{\eta} \right)\mat{M}^{-1}+\left( - \mat{y}_\xi \left( \average{\mat{h}^2}   - \mat{h}^2 \right)\right)\smat.
\end{aligned}
\end{equation}
We first consider the volume terms in \eqref{LakeAtRestxMom}. For constant total water height $h+b=\textrm{const}$, the exactness of the derivative operator implies $\dmat(\mat{h}+\mat{b})=(\mat{h}+\mat{b})\dmatT=0$. The remaining terms cancel due to the metric identities \eqref{metricIDsDiscrete} and we have
\begin{equation}
\begin{aligned}
\frac{g}{2}\mat{h}\circ ( \dmat(\mat{y}_\eta  \circ(\mat{h}+\mat{b}) ) &+ \mat{y}_\eta   \circ \dmat( \mat{h}+\mat{b} )  - (\mat{y}_\xi \circ (\mat{h}+\mat{b}))\dmatT  -\mat{y}_\xi \circ ( \mat{h}+\mat{b})\dmatT) \\
&=\frac{g}{2} \mat{h}\circ( (\mat{h}+\mat{b})\dmat(\mat{y}_\eta  )  - (\mat{h}+\mat{b})\circ(\mat{y}_\xi )\dmatT ) =0.
\end{aligned}
\end{equation}
Since we allow discontinuities in the bottom topography at element interfaces we must account for the jump in water height and we cannot guarantee that $\average{h^2}=h^2$. Instead we have at each interface
\begin{equation}
\label{squareHeightError}
\average{h^2}-h^2= \frac{h_i^2+h_o^2}{2} - h_i^2 = \frac{h_o^2-h_i^2}{2}=\pm\half\jump{h^2} = \pm\average{h}\jump{h},
\end{equation}
where $h_i$ denotes the inner and $h_o$ the outer value. The sign of \eqref{squareHeightError} depends upon which interface we consider. It is positive for the right and top interface and negative otherwise. In combination with the sign applied by the matrix, $\smat= \textrm{diag}(\frac{1}{\omega_0},\ldots,-\frac{1}{\omega_N})$, which gives a negative contribution for the top right component and a positive contribution for the bottom left component, we can restate these terms as
\begin{equation}
\begin{aligned}
\label{SurfaceRemains}
& \smat\left(\mat{y}_\eta \circ\left(\average{\mat{h}^2}_\xi-\mat{h}^2\right)\right) = -\mat{M}^{-1}\left(\mat{y}_\eta \circ\left( \average{\mat{h}}_\xi \circ\jump{\mat{h}}_\xi \right)\right),\\
& \left(-\mat{y}_\xi \circ\left(\average{\mat{h}^2}_\eta-\mat{h}^2\right)\right)\smat= - \left(-\mat{y}_\xi \circ\left(\average{\mat{h}}_\eta \circ\jump{\mat{h}}_\eta\right)\right) \mat{M}^{-1}.
\end{aligned}
\end{equation}
With the reformulation \eqref{SurfaceRemains} we show that the interface contributions are zero
\begin{equation}
\begin{aligned}
&-\mat{M}^{-1}\left( \mat{y}_\eta \circ \average{\mat{h}}_{\xi} \circ\jump{\mat{b}}_{\xi} \right) -\mat{M}^{-1}\left(\mat{y}_\eta \circ\left( \average{\mat{h}}_\xi \circ\jump{\mat{h}}_\xi \right)\right)  \\
&- \left( -\mat{y}_\xi \circ\average{\mat{h}}_{\eta} \circ \jump{\mat{b}}_{\eta} \right)\mat{M}^{-1}- \left(- \mat{y}_\xi \circ\left(\average{\mat{h}}_\eta \circ \jump{\mat{h}}_\eta\right)\right) \mat{M}^{-1} \\
= & -\mat{M}^{-1}\left( \mat{y}_\eta \circ\left( \average{\mat{h}}_{\xi}\circ\jump{\mat{b}+\mat{h}}_{\xi} \right)\right)  \\
&- \left( - \mat{y}_\xi \circ\left( \average{\mat{h}}_{\eta} \circ \jump{\mat{b}+\mat{h}}_{\eta}\right) \right) \mat{M}^{-1} =0,
\end{aligned}
\end{equation}
since $h+b=\textrm{const}$, even on interfaces.

We have shown that the discrete time derivative of the $J(hu)_t$ is zero. The treatment of the $J(hv)_t$ contribution is also zero by an analogous argument. Thus, the scheme is well-balanced.
\end{proof}

\section{Algorithmic Description of the ESDGSEM}\label{sec:efficiency}

We outlined the reformulation of the curvilinear DGSEM in the flux differencing form in Sec. \ref{sec:FluxDiff}. Now we provide specific details and restructure the algorithms of a standard DGSEM implementation to incorporate the entropy stable approximation. The flux differencing form for the curvilinear ESDGSEM \eqref{Eq:CurvilinearESDGSEM} is self-contained, but it seems to be a daunting task to implement. We demonstrate, however, that with a few extra procedures and a slight restructuring of a standard DGSEM time derivative routine it is straightforward to implement the newly proposed entropy stable scheme. 

For this discussion we focus on the computation of the time derivative on a single spectral element. The global time derivative is assembled by looping over every element in a mesh \cite{koprivabook}. To make the discussion concrete we utilize the DGSEM structure outlined in Chap. 8.4 of the book by Kopriva \cite{koprivabook}, but the discussion readily extends to any standard DGSEM implementation. 

To begin, we introduce the notation used throughout this section. We store the computed solution on each element, scaled by the Jacobian, in the array $\{Jw_{i,j,n}\}_{i=0,j=0,n=1}^{N,N,nEqn}$ where $N$ is the polynomial order of the approximation and $nEqn$ is the number of equations. The time derivative, also scaled by the Jacobian, is stored in the array $\{J\dot{w}_{i,j,n}\}_{i=0,j=0,n=1}^{N,N,nEqn}$. Each element stores array information about its mapping (the Jacobian, metric terms, etc.) in the $geom$ object. We adopt the notation of a period to denote access to a component of an object. We store the bottom topography contributions in the volume and on the boundary of an element in separate places (for convenience). The array $\{b_{i,j}\}_{i=0,j=0}^{N,N}$ stores the bottom topography evaluated at the Legendre-Gauss-Lobatto nodes. The array $\{db_{i,j,n}\}_{i=0,j=0,n=1}^{N,N,nEqn}$ stores the volume contributions of the source term and the array $\{\jump{b}_{i,ID}\}_{i=0,ID=1}^{N,4}$  stores the jump in the bottom topography along each edge of a quadrilateral element. To compute the source term at each edge we also store the average of the computed water height in the array $\{\average{h}_{i,ID}\}_{i=0,ID=1}^{N,4}$.

First, we outline the details of the source term discretisation. We divide the computation into two parts: surface and volume contributions. For the surface contributions we alter the routine Alg. 137 (\textit{EdgeFluxes}) from \cite{koprivabook}. This is done out of convenience because the \textit{EdgeFluxes} procedure already has access to local information about an edge, its local ID, and the elements that border an edge. Because we are on an unstructured mesh, care must be taken when computing the jump in the bottom topography term. We take the element to the left of an edge, $e_1$, to be the ``interior'' and the element on the right, $e_2$, to be the ``exterior'' so that definition of the $\jump{b}$ terms is clear. Also, we denote the local side ID of an edge on each element by $s_1$ and $s_2$ respectively. We fill a temporary array from the bottom topography term $\{b_{i,j}\}_{i=0,j=0}^{N,N}$ that depends on the local side index of the edge on the elements $e_1$ and $e_2$, e.g., if $s_1 = 1$ then $b_L =e_1.\{b_{i,0}\}_{i=0}^{N}$ and $s_2 = 3$ then $b_R = e_2.\{b_{i,N}\}_{i=0}^N$. Then, after the normal numerical flux at an edge is computed, one adds to the existing algorithm:
\begin{equation}
\begin{aligned}
e_1.\left\{\jump{b}_{i,s_1}\right\}_{i=0}^N &= b_R - b_L,\\
e_2.\left\{\jump{b}_{i,s_2}\right\}_{i=0}^N &= e_1.\left\{\jump{b}_{i,s_1}\right\}_{i=0}^N, \\
e_1.\left\{\average{h}_{i,s_1}\right\}_{i=0}^N&= \frac{1}{2}(h_R + h_L),\\
e_2.\left\{\average{h}_{i,s_2}\right\}_{i=0}^N &= e_1.\left\{\average{h}_{i,s_1}\right\}_{i=0}^N.
\end{aligned}
\end{equation}
Alg. \ref{Alg:BottomContributions} (\textit{BottomContributions}) is a straightforward implementation of \eqref{eq:sourceTerm} to compute the volume contributions of the source term. We note that one could write the volumetric computation of the source term under BLAS3 architecture standards. Because the bottom topography does not depend on time we precompute and store the quantities $\{db_{i,j,n}\}_{i=0,j=0,n=1}^{N,N,nEqn}$ for later use.
\begin{algorithm}[ht]
  \textbf{Procedure} BottomContributions\;
  \KwIn{$x,y,\{h\}_{i=0,j=0}^{N,N},\{b\}_{i=0,j=0}^{N,N},geom,\{D_{i,j}\}_{i=0,j=0}^{N,N}$ \tcp{polynomial derivative matrix}}\;
 $\{db_{i,j,n}\}_{i=0,j=0,n=1}^{N,N,nEqn} \gets 0$\;
   \For{$i=0$ \mytext{to} $N$}{
    \For{$j=0$ \mytext{to} $N$}{
      $\{sum_m\}_{m=1}^6 \gets 0$\;
       \For{$k=0$ \mytext{to} $N$}{
       $sum_1 \gets sum_1 + D_{i,k}*b_{k,j}$\;
       $sum_2 \gets sum_2 + b_{i,k}*D^T_{k,j}$\;
       $sum_3 \gets sum_3 + D_{i,k}*\left((geom.y_\eta)_{k,j}*b_{k,j}\right)$\;
       $sum_4 \gets sum_4 + \left((geom.y_\xi)_{i,k}*b_{i,k}\right)*D^T_{k,j}$\;
       $sum_5 \gets sum_5 + D_{i,k}*\left((geom.x_\eta)_{k,j}*b_{k,j}\right)$\;
       $sum_6 \gets sum_6 + \left((geom.x_\xi)_{i,k}*b_{i,k}\right)*D^T_{k,j}$\;
       }
       $db_{i,j,2} \gets (geom.y_\eta)_{i,j}*sum_1 - (geom.y_\xi)_{i,j}*sum_2 + sum_3 - sum_4$\;\vspace{1mm}
       $db_{i,j,3} \gets -(geom.x_\eta)_{i,j}*sum_1 + (geom.x_\xi)_{i,j}*sum_2 - sum_5 + sum_6$\;
     }
   }
  \KwOut{$\{db\}_{i=0,j=0,n=1}^{N,N,nEqn}$}\;
  \textbf{End Procedure} BottomContributions\;
 \caption{(\textit{BottomContributions}) Computation of the volumetric source term contributions.}
 \label{Alg:BottomContributions}
\end{algorithm}

We next present Algs. \ref{Alg:HighOrderxFlux} (\textit{HighOrder-xFluxDifference}) and \ref{Alg:HighOrderyFlux} (\textit{HighOrder-yFluxDifference}) needed to reformulate the volume contributions of the ESDGSEM into the subcell flux differencing form presented in \eqref{eq:fluxDiffForm} and \eqref{eq:fluxDiffFormG}. Routines to compute the volume fluxes $\statevec{F}^{vol}$ and $\statevec{G}^{vol}$, given in \eqref{volFlux2D}, are straightforward to implement, so we omit an explicit algorithm. To simplify the indexing in each of the high-order flux differencing algorithms we assume that the procedures is passed an appropriate slice from the solution storage and metric term arrays.

\begin{algorithm}[ht]
  \textbf{Procedure} HighOrder-xFluxDifference\;
  \KwIn{$\{{{W}}_{i,n}\}_{i=0,n=1}^{N,nEqn}$ \tcp{slice of solution on Gauss-Lobatto grid}}\;
  \vspace{-0.325in}\;
  \KwIn{$\left\{(y_\eta)_i\right\}_{i=0}^N,\left\{(x_\eta)_i\right\}_{i=0}^N$ \tcp{metric terms on Gauss-Lobatto grid}}\;
  \vspace{-0.3in}\;
  \KwIn{$Q$ \tcp{SBP matrix}}\;
  
  \For{$i=0$ \mytext{to} $N$}{
    \For{$m=0$ \mytext{to} $N$}{
          $\average{y_\eta} \gets \frac{1}{2}\left((y_\eta)_{i} +(y_\eta)_{m} \right)$\;\vspace{1mm}
          $\average{x_\eta} \gets \frac{1}{2}\left((x_\eta)_{i} +(x_\eta)_{m} \right)$\;
          $\left\{d\bar{\tilde{F}}_{i,n}\right\}_{n=1}^{nEqn} \gets \left\{d\bar{\tilde{F}}_{i,n}\right\}_{n=1}^{nEqn} + 2Q_{i,m}\left(\average{y_\eta}*F^{vol}\left(\{W_{i,n}\}_{n=1}^{nEqn},\{W_{m,n}\}_{n=1}^{nEqn}\right)-\average{x_\eta}*G^{vol}\left(\{W_{i,n}\}_{n=1}^{nEqn},\{W_{m,n}\}_{n=1}^{nEqn}\right)\right)$\;
       }
      }
   \KwOut{$\left\{d\bar{\tilde{{F}}}_{i,n}\right\}_{i=0,n=1}^{N,nEqn}$}\;
  \textbf{End Procedure} HighOrder-xFluxDifference\;
 \caption{(\textit{HighOrder-xFluxDifference}) Computation of the flux difference $\boldsymbol{\Delta}\overline{\mat{\tilde{F}}}$.}
 \label{Alg:HighOrderxFlux}
\end{algorithm}

\begin{algorithm}[ht]
  \textbf{Procedure} HighOrder-yFluxDifference\;
    \KwIn{$\{{W}_{j,n}\}_{j=0,n=1}^{N,nEqn}$ \tcp{slice of solution on Gauss-Lobatto grid}}\;
  \vspace{-0.325in}\;
  \KwIn{$\left\{(y_\xi)_j\right\}_{j=0}^N,\left\{(x_\xi)_j\right\}_{j=0}^N$ \tcp{metric terms on Gauss-Lobatto grid}}\;
  \vspace{-0.3in}\;
  \KwIn{$Q$ \tcp{SBP matrix}}\;
  
  \For{$j=0$ \mytext{to} $N$}{
    \For{$m=0$ \mytext{to} $N$}{
          $\average{y_\xi} \gets \frac{1}{2}\left((y_\xi)_{j} +(y_\xi)_{m} \right)$\;\vspace{1mm}
          $\average{x_\xi} \gets \frac{1}{2}\left((x_\xi)_{j} +(x_\xi)_{m} \right)$\;
          $\left\{d\bar{\tilde{G}}_{j,n}\right\}_{n=1}^{nEqn} \gets \left\{d\bar{\tilde{G}}_{j,n}\right\}_{n=1}^{nEqn}+2Q_{j,m}\left(-\average{y_\xi}*F^{vol}\left(\{W_{j,n}\}_{n=1}^{nEqn},\{W_{m,n}\}_{n=1}^{nEqn}\right)+\average{x_\xi}*G^{vol}\left(\{W_{j,n}\}_{n=1}^{nEqn},\{W_{m,n}\}_{n=1}^{nEqn}\right)\right)$\;
       }
      }
   \KwOut{$\left\{d\bar{\tilde{G}}_{j,n}\right\}_{j=0,n=1}^{N+1,nEqn}$}\;
  \textbf{End Procedure} HighOrder-yFluxDifference\;
 \caption{(\textit{HighOrder-yFluxDifference}) Computation of the flux difference $\overline{\mat{\tilde{G}}}\boldsymbol{\Delta}^T$.}
 \label{Alg:HighOrderyFlux}
\end{algorithm}

Now that we have outlined the source term discretisation and the high-order flux extensions we are prepared to present the main algorithm for the efficient ESDGSEM implementation. We restructure the routine Alg. 144 (\textit{MappedDG2DTimeDerivative}) from \cite{koprivabook} that computes the local time derivative on a curved quadrilateral element. We outline the explicit steps to change a standard DGSEM approximation to implement the ESDGSEM for the shallow water equations:
\begin{enumerate}
\item Begin with Alg. 144 (\textit{MappedDG2DTimeDerivative}) from \cite{koprivabook} that computes the local time derivative on an element. 
\item Remove the standard approach that computes the DG derivative (denoted Alg. 92 (\textit{SystemDGDerivative}) in \cite{koprivabook}).
\item Insert the equivalent flux differencing formulation for the volume terms outlined in Sec. \ref{sec:FluxDiff} and detailed in Algs. \ref{Alg:HighOrderxFlux} and \ref{Alg:HighOrderyFlux}.
\item Use the entropy stable numerical fluxes \eqref{fStabilized} and \eqref{gStabilized} at element interfaces.
\item Multiply the precomputed parts of the volume source term discretisation by the water height and add the source term contributions at each element in the volume.
\item Multiply the precomputed parts of the surface source term discretisation by the average water height and add the source term contributions at each element edge.
\item Alg. \ref{Alg:ESDG2DTimeDerivative} (\textit{ESDG2DTimeDerivative}) summarises the reformulation of a standard DG derivative to the computationally efficient flux difference form.
\end{enumerate}
To reiterate, we assume that the approximations to the bottom derivatives are precomputed and stored and the surface source term contributions are computed in an augmented \textit{EdgeFluxes} procedure.
\begin{algorithm}[ht]
  \textbf{Procedure} ESDG2DTimeDerivative\;
  \KwIn{$\{JW_{i,j,n}\}_{i=0,j=0,n=1}^{N,N,nEqn}$ \tcp{solution scaled by Jacobian}}\;
   \vspace{-0.3in}\;
  \KwIn{$\left\{\tilde{F}^{*,es}_{i,n,ID}\right\}_{i=0,n=1,ID=1}^{N,nEqn,4}$, $\left\{\tilde{G}^{*,es}_{j,n,ID}\right\}_{j=0,n=1,ID=1}^{N,nEqn,4}$ \tcp{entropy stable numerical fluxes}}\;
   \vspace{-0.3in}\;
  \KwIn{$geom$ \tcp{element geometry}}\;
   \vspace{-0.3in}\;
  \KwIn{$Q$ \tcp{SBP matrix}}\;
   \vspace{-0.3in}\;
  \KwIn{$\{\omega_i\}_{i=0}^N$ \tcp{Gauss-Lobatto quadrature weights}}\;
   \vspace{-0.3in}\;
  \KwIn{$\{db_{i,j,n}\}_{i=0,j=0,n=1}^{N,N,nEqn},\{\jump{b}_{i,ID}\}_{i=0,ID=1}^{N,4},\{\average{h}_{i,ID}\}_{i=0,ID=1}^{N,4}$ \tcp{bottom contributions}}\;

  $\{W_{i,j,n}\}_{i=0,j=0,n=1}^{N,N,nEqn} \gets \{JW_n\}_{i=0,j=0,n=1}^{N,N,nEqn}/\{(geom.J)_{i,j}\}_{i=0,j=0}^{N,N}$\;
  \tcp{$\xi-$direction}\;
  \vspace{-0.3in}\;
  \For{$j=0$ \mytext{to} $N$}{
       $\left\{\tilde{F}_{0,n}\right\}_{n=1}^{nEqn} \gets (geom.y_\eta)_{0,j}*xFlux\left(\{W_{0,j,n}\}_{n=1}^{nEqn}\right)-(geom.x_\eta)_{0,j}*yFlux\left(\{W_{0,j,n}\}_{n=1}^{nEqn}\right)$\;
       $\left\{\tilde{F}_{N,n}\right\}_{n=1}^{nEqn} \gets (geom.y_\eta)_{N,j}*xFlux\left(\{W_{N,j,n}\}_{n=1}^{nEqn}\right)-(geom.x_\eta)_{N,j}*yFlux\left(\{W_{N,j,n}\}_{n=1}^{nEqn}\right)$\;
       $\left\{d\bar{\tilde{F}}_{i,n}\right\}_{i=0,n=1}^{N+1,nEqn} \gets HighOrder$-$xFluxDifference\left(\left\{W_{i,j,n}\right\}_{i=0,n=1}^{N,nEqn},Q,\left\{(geom.y_\eta)_{i,j}\right\}_{i=0}^N,\left\{(geom.x_\eta)_{i,j}\right\}_{i=0}^N\right)$\;
          \vspace{-0.1in}\;
      \For{$i=0$ \mytext{to} $N$}{
        $\{J\dot{W}_{i,j,n}\}_{n=1}^{nEqn} \gets \{J\dot{W}_{i,j,n}\}_{n=1}^{nEqn} + \frac{1}{\omega_i}*\left\{d\bar{\tilde{F}}_{i,n}\right\}_{n=1}^{nEqn}$\;
      }
      $\{J\dot{W}_{0,j,n}\}_{n=1}^{nEqn} \gets \{J\dot{W}_{0,j,n}\}_{n=1}^{nEqn} - \frac{1}{\omega_0}\left(\left\{\tilde{F}^{*,es}_{j,n,4}\right\}_{n=1}^{nEqn} - \left\{\tilde{F}_{0,n}\right\}_{n=1}^{nEqn}\right) $\;
      $\{J\dot{W}_{N,j,n}\}_{n=1}^{nEqn} \gets\{J\dot{W}_{N,j,n}\}_{n=1}^{nEqn} +\frac{1}{\omega_N}\left(\left\{\tilde{F}^{*,es}_{j,n,2}\right\}_{n=1}^{nEqn} - \left\{\tilde{F}_{N,n}\right\}_{n=1}^{nEqn}\right) $\;
      $ \{J\dot{W}_{0,j,2}\} \gets \{J\dot{W}_{0,j,2}\} +\frac{g}{2\omega_0}(geom.y_\eta)_{0,j}*\jump{b}_{j,4}*\average{h}_{j,4}$\;
      $ \{J\dot{W}_{0,j,3}\} \gets \{J\dot{W}_{0,j,3}\}-\frac{g}{2\omega_0}(geom.x_\eta)_{0,j}*\jump{b}_{j,4}*\average{h}_{j,4}$\;
      $ \{J\dot{W}_{N,j,2}\} \gets \{J\dot{W}_{N,j,2}\} +\frac{g}{2\omega_N}(geom.y_\eta)_{N,j}*\jump{b}_{j,2}*\average{h}_{j,2}$\;
      $ \{J\dot{W}_{N,j,3}\} \gets \{J\dot{W}_{N,j,3}\} -\frac{g}{2\omega_N}(geom.x_\eta)_{N,j}*\jump{b}_{j,2}*\average{h}_{j,2}$\;
      }
    \tcp{$\eta-$direction}\;
  \vspace{-0.3in}\;
  \For{$i=0$ \mytext{to} $N$}{
       $\left\{\tilde{G}_{0,n}\right\}_{n=1}^{nEqn} \gets -(geom.y_\xi)_{i,0}*xFlux\left(\{W_{i,0,n}\}_{n=1}^{nEqn}\right)+(geom.x_\xi)_{i,0}*yFlux\left(\{W_{i,0,n}\}_{n=1}^{nEqn}\right)$\;
       $\left\{\tilde{G}_{N,n}\right\}_{n=1}^{nEqn} \gets -(geom.y_\xi)_{i,N}*xFlux\left(\{W_{i,N,n}\}_{n=1}^{nEqn}\right)+(geom.x_\xi)_{i,N}*yFlux\left(\{W_{i,N,n}\}_{n=1}^{nEqn}\right)$\;
       $\left\{d\bar{\tilde{G}}_{j,n}\right\}_{j=0,n=1}^{N+1,nEqn} \gets HighOrder$-$yFluxDifference\left(\left\{W_{i,j,n}\right\}_{j=0,n=1}^{N,nEqn},Q,\left\{(geom.y_\xi)_{i,j}\right\}_{j=0}^N,\left\{(geom.x_\xi)_{i,j}\right\}_{j=0}^N\right)$\;
      \vspace{-0.1in}\;
      \For{$j=0$ \mytext{to} $N$}{
        $\{J\dot{W}_{i,j,n}\}_{n=1}^{nEqn} \gets \{J\dot{W}_{i,j,n}\}_{n=1}^{nEqn} + \frac{1}{\omega_j}*\left\{d\bar{\tilde{G}}_{j,n}\right\}_{n=1}$\;
      }
      $\{J\dot{W}_{i,0,n}\}_{n=1}^{nEqn} \gets \{J\dot{W}_{i,0,n}\}_{n=1}^{nEqn} - \frac{1}{\omega_0}\left(\left\{\tilde{G}^{*,es}_{i,n,1}\right\}_{n=1}^{nEqn} - \left\{\tilde{G}_{0,n}\right\}_{n=1}^{nEqn}\right) $\;
      $\{J\dot{W}_{i,N,n}\}_{n=1}^{nEqn} \gets\{J\dot{W}_{i,N,n}\}_{n=1}^{nEqn} +\frac{1}{\omega_N}\left(\left\{\tilde{G}^{*,es}_{i,n,3}\right\}_{n=1}^{nEqn} - \left\{\tilde{G}_{N,n}\right\}_{n=1}^{nEqn}\right) $\;
      $ \{J\dot{W}_{i,0,2}\} \gets \{J\dot{W}_{i,0,2}\} -\frac{g}{2\omega_0}(geom.y_\xi)_{i,0}*\jump{b}_{i,1}*\average{h}_{i,1}$\;
      $ \{J\dot{W}_{i,0,3}\} \gets \{J\dot{W}_{i,0,3}\}+\frac{g}{2\omega_0}(geom.x_\xi)_{i,0}*\jump{b}_{i,1}*\average{h}_{i,1}$\;
      $ \{J\dot{W}_{i,N,2}\} \gets \{J\dot{W}_{i,N,2}\} -\frac{g}{2\omega_N}(geom.y_\xi)_{i,N}*\jump{b}_{i,3}*\average{h}_{i,3}$\;
      $ \{J\dot{W}_{i,N,3}\} \gets \{J\dot{W}_{i,N,3}\} +\frac{g}{2\omega_N}(geom.x_\xi)_{i,N}*\jump{b}_{i,3}*\average{h}_{i,3}$\;
      }
  \For{$i=0$ \mytext{to} $N$}{
    \For{$j=0$ \mytext{to} $N$}{      
      $\{J\dot{W}_{i,j,n}\}_{n=1}^{nEqn} \gets -\left(\{J\dot{W}_{i,j,n}\}_{n=1}^{nEqn} + g*W_{i,j,1}*\{db_{i,j,n}\}_{n=1}^{nEqn}\right)$\;
      }
      }
    \KwOut{$\{J\dot{W}_{i,j,n}\}_{i=0,j=0,n=1}^{N,N,nEqn}$}\;
  \textbf{End Procedure} ESDG2DTimeDerivative\;
 \caption{(\textit{ESDG2DTimeDerivative}) Efficient implementation of two dimensional ESDGSEM in curvilinear coordinates.}
 \label{Alg:ESDG2DTimeDerivative}
\end{algorithm}

{\FloatBarrier}

\section*{References}
\bibliographystyle{elsarticle-num}
\bibliography{References}

\end{document}